\documentclass[12pt,reqno]{amsart}%
\usepackage{amsmath, amsfonts, amssymb, amsthm, amscd, amsbsy}
\usepackage{fancyhdr}
\usepackage[usenames,dvipsnames,svgnames,x11names,hyperref]{xcolor}
\usepackage{geometry}
\usepackage{graphicx}
\usepackage{hyperref}
\usepackage{cleveref}
\usepackage{amsmath}
\usepackage{amsfonts}
\usepackage{amssymb}
\usepackage{lineno}%
\usepackage[normalem]{ulem}

\providecommand{\U}[1]{\protect\rule{.1in}{.1in}}
\hypersetup{colorlinks,breaklinks,
	linkcolor={Fuchsia},
	citecolor={ForestGreen},
	urlcolor={NavyBlue}}
\theoremstyle{plain}
\newtheorem{theorem}{Theorem}[section]

\newtheorem{corollary}[theorem]{Corollary}

\newtheorem{lemma}[theorem]{Lemma}

\numberwithin{equation}{section}
\allowdisplaybreaks
\newtheorem{theorema}{Theorem}[section]

\newcommand{\mr}{\mathbb{R}}
\newcommand{\p}{\partial}
\renewcommand{\l}{\left}
\renewcommand{\r}{\right}
\newcommand{\var}{\operatorname{Var}_{\mu_A}}
\newcommand{\E}{\mathcal{E}}

\begin{document}
\title[ Stability of the sharp  Poincar\'e and Brascamp--Lieb stability]{ The stability of Sharp Poincar\'e  and Brascamp--Lieb inequalities with full monomial weights}
\author{Nguyen Lam}
\address{Nguyen Lam: School of Science and the Environment, Grenfell Campus, Memorial
University of Newfoundland, Corner Brook, NL A2H5G4, Canada}
\email{nlam@mun.ca}
\author{Guozhen Lu}
\address{Guozhen Lu: Department of Mathematics, University of Connecticut, Storrs, CT
06269, USA}
\email{guozhen.lu@uconn.edu}
\author{Andrey Russanov}
\address{Andrey Russanov: Department of Mathematics, University of Connecticut, Storrs,
CT 06269, USA}
\email{andrey.russanov@uconn.edu}

\begin{abstract}

 It was known that the Gaussian measures with monomial weights satisfy the 
  the sharp pointwise curvature-dimension condition $CD(1, \infty)$, and consequently they allow us to derive the sharp constant $1$ for the  Poincar\'e inequality only with partial monomial weights. 
 In this paper, 
we first establish the
Gaussian measures with full monomial weights satisfy the 
integrated curvature-dimension condition $ICD(2, \infty)$ which is sufficient for us to  prove the  sharp Poincaré inequalities with optimal constant $2$ and the stability of the Brascamp–Lieb inequality for full monomial Gaussian measures. Several of these results are extended to homogeneous Gaussian measures on convex cones.  Using the sharp Poincaré inequality and the hypercontractivity, we obtain an improved Beckner inequality that recover the sharp Poincaré and logarithmic Sobolev constants at the endpoints. By developing a Laguerre spectral decomposition  of the associated Ornstein–Uhlenbeck type generator, we derive an exact identity for the Poincaré deficit and obtain sharp gradient stability for the Poincar\'e inequality with full monomial weights. We further establish $L^2$ and weighted gradient stability estimates for the Brascamp–Lieb inequality with explicit constants when all monomial exponents exceed \(1\). The \(L^2\) estimate extends to log-concave homogeneous weights under a subharmonicity assumption. Finally, we obtain an improved integrated curvature-dimension bound under a scale-invariant Hessian condition and identify the sharp constant for the Poincaré inequality in the radial class.

\end{abstract}
\maketitle

\section{Introduction}

The monomial weight $x^A = x^{\alpha_1}_1 \dots x_N^{\alpha_N}$, where $\alpha_i \geq 0$, was introduced by Cabr\'e and Ros-Oton in \cite{CR, CRO} in the study of stable solutions and of isoperimetric and Sobolev inequalities. It is an example of a homogeneous weight on the convex cone
\begin{equation*}
    \mathbb{R}_{\ast}^{N}=\left\{  x\in\mathbb{R}^{N}:x_{i}>0 \quad\text{ if} \quad \alpha_{i}>0\right\}.
\end{equation*}
Here a weight $w$ on an open cone $\Sigma\subset\mr^N$ is homogeneous of degree $\alpha\geq0$ if $w(tx)=t^\alpha w(x)$ for all $t>0$ and $x\in\Sigma$. The monomial weight $x^A$ is homogeneous of degree $\alpha=\alpha_1+\dots+\alpha_N$. Isoperimetric inequalities with homogeneous weights on convex cones were proved by Cabr\'e, Ros-Oton, and Serra in \cite{CRS16}. Sobolev, isoperimetric, Trudinger--Moser, and Caffarelli--Kohn--Nirenberg inequalities for this class were developed in \cite{CRO, CRS16} (see also \cite{Cas, DNP19, AJLL23}), and the associated Hardy identities and inequalities, together with their stability estimates, in
\cite{DLL22, CFLL24}. See also \cite{FLL-ANS} for Hardy identities in other settings and \cite{DPH25, Wan22} for Dunkl weights.

The Gaussian analogue is the probability measure
\begin{equation*}
    d\mu_{A}=\frac{x^{A}e^{-\frac{1}{2}|x|^{2}}}{\int_{\mathbb{R}_{\ast}^{N}
}x^{A}e^{-\frac{1}{2}|x|^{2}}dx}dx
\end{equation*}
on $\mathbb{R}_{\ast}^{N}$. This is the measure associated to the potential $W = \frac{|x|^2}{2}-\log x^A$, and its diffusion generator is
\begin{equation*}
    L_A = \Delta - \nabla W\cdot\nabla=\sum_{i=1}^N\left(\p_{x_ix_i}-\l(x_i-\frac{\alpha_i}{x_i}\r)\p_{x_i}\right).
\end{equation*}
More generally, let $\Sigma\subset\mr^N$ be an open convex set and let $w\in C(\overline\Sigma)\cap C^2(\Sigma)$ be positive in $\Sigma$. In most part of the paper $\Sigma$ is an open convex cone. We write
\begin{equation*}
    d\mu_w=\frac{1}{Z_w}\,w(x)e^{-\frac{|x|^2}{2}}dx,\qquad Z_w=\int_\Sigma w(x)e^{-\frac{|x|^2}{2}}dx,\qquad L_w=\Delta-x\cdot\nabla+\nabla\log w\cdot\nabla.
\end{equation*}
The monomial case is $\Sigma=\mr_*^N$, $w=x^A$, $\mu_w=\mu_A$.
We denote by $W^{1,2}_A(\mr_*^N,\mu_A)$ the closure of $C_c^\infty(\overline{\mr_*^N})$ with respect to the norm given by $\|f\|^2=\int|f|^2d\mu_A+\int|\nabla f|^2d\mu_A$. Throughout the paper, $L_A$ denotes the self-adjoint operator associated with the closed form $\E(f)=\int_{\mr_*^N}|\nabla f|^2d\mu_A$ on $W^{1,2}_A(\mr_*^N,\mu_A)$ (the Neumann realization, see Section~\ref{sec:spectral}), and $P_t=e^{tL_A}$ is the associated semigroup. In the same way, $W^{1,2}(\Sigma,\mu_w)$ is the closure of $C_c^\infty(\overline\Sigma)$ with respect to the norm given by $\|f\|^2=\int_\Sigma|f|^2d\mu_w+\int_\Sigma|\nabla f|^2d\mu_w$.

For a diffusion operator $L=\Delta-\nabla W\cdot\nabla$ we use the carr\'e du champ operators
\begin{equation*}
    \Gamma(f)=|\nabla f|^2,\qquad \Gamma_2(f)=\frac12L\Gamma(f)-\nabla f\cdot\nabla Lf=\|\nabla^2f\|^2+\langle\nabla^2W\,\nabla f,\nabla f\rangle,
\end{equation*}
where the last identity is the Bochner formula. We say that $L$ satisfies the pointwise curvature-dimension condition $CD(\rho,\infty)$ if 
$$\Gamma_2(f)\geq\rho\,\Gamma(f),$$ 
and the integrated curvature-dimension condition $ICD(\rho,\infty)$ if 
$$\int\Gamma_2(f)\,d\mu\geq\rho\int\Gamma(f)\,d\mu.$$ 
We write $P(\rho)$ and $LS(C,0)$, respectively, for the Poincar\'e and logarithmic Sobolev inequalities
\begin{equation*}
    \rho\operatorname{Var}_\mu(f)\leq\int\Gamma(f)\,d\mu,\qquad \operatorname{Ent}_\mu(|f|^2)\leq2C\int\Gamma(f)\,d\mu.
\end{equation*} We refer to \cite{BE85} for the curvature-dimension criterion and to
\cite{BGL14, Wang05, FW-209} for the $\Gamma$-calculus and semigroup framework used throughout, and to \cite{CZ25} for eigenvalue estimates of drifted Laplacians under Bakry--\'Emery Ricci curvature bounds. The operator $L_A$ satisfies the pointwise Bakry--\'Emery condition $CD(1,\infty)$.

As in \cite{LLR25}, we say that the monomial weight $x^A$ is \emph{full} if $\alpha_i>0$ for all $i$, and \emph{partial} otherwise. This paper is motivated by our previous works \cite{LLR25, LLR26}. We recall their main results.

\begin{theorema}[\cite{LLR25}]\label{thm:A}
For every monomial weight $x^A$ and every $u\in W^{1,2}_A(\mr_*^N,\mu_A)$,
\begin{equation*}
    \int_{\mr_*^N}|\nabla u|^2d\mu_A\geq\int_{\mr_*^N}\left|u-\int_{\mr_*^N}u\,d\mu_A\right|^2d\mu_A,
\end{equation*}
and
\begin{multline*}
    \int_{\mr_*^N}|\nabla u|^2d\mu_A-\int_{\mr_*^N}\left|u-\int_{\mr_*^N}u\,d\mu_A\right|^2d\mu_A\\
    \geq\frac12\int_{\mr_*^N}\left|\nabla\left(u-\left\langle\int_{\mr_*^N}\left(u-\int_{\mr_*^N}u\,d\mu_A\right)x\,d\mu_A,\,x\right\rangle\right)\right|^2d\mu_A.
\end{multline*}
If $x^A$ is partial, both inequalities are attained by nonconstant functions.
\end{theorema}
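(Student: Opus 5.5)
The first inequality is the Poincaré inequality $P(1)$. I would derive it from the pointwise condition $CD(1,\infty)$ for $L_A$, which the paper takes as known. The Bochner formula gives
\[
\Gamma_2(f)=\|\nabla^2 f\|^2+\sum_i\Bigl(1+\frac{\alpha_i}{x_i^2}\Bigr)(\p_i f)^2\ge |\nabla f|^2 ,
\]
since $\nabla^2W=\mathrm{diag}(1+\alpha_i/x_i^2)\ge I$. Integrating over $\mu_A$ yields $\int\Gamma_2(f)\,d\mu_A\ge\int\Gamma(f)\,d\mu_A$. Now set $f=u-\int u\,d\mu_A$ and assume first that $u$ is smooth enough that $u=L_Ag$ for some $g$. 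Writing $\int\Gamma_2(g)\,d\mu_A=\int (L_Ag)^2\,d\mu_A$, the integrated condition becomes $\int(L_Ag)^2\,d\mu_A\ge\int|\nabla g|^2\,d\mu_A=-\int gL_Ag\,d\mu_A$. This is the spectral statement that the first nonzero eigenvalue of $-L_A$ is at least $1$. Equivalently, I would use the semigroup identity
\[
\operatorname{Var}(f)=2\int_0^\infty\int|\nabla P_tf|^2\,d\mu_A\,dt
\]
together with the decay $\int|\nabla P_tf|^2\,d\mu_A\le e^{-2t}\int|\nabla f|^2\,d\mu_A$, which comes from differentiating in $t$ and applying the integrated $\Gamma_2$ bound. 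Integrating in $t$ gives exactly $\operatorname{Var}(f)\le\int|\nabla f|^2\,d\mu_A$.

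The main obstacle is justifying the integrations by parts on the cone, which underlie the $\Gamma_2$ identity. The boundary terms $\int_{\p\mr_*^N}(\cdots)\,x^A\,d\sigma$ vanish when $\alpha_i>0$ because of the weight. When $\alpha_i=0$ they need Neumann-type behaviour, which the Neumann realization of $L_A$ provides for $P_tf$. I would verify these identities for $f\in C_c^\infty(\overline{\mr_*^N})$ and then pass to all of $W^{1,2}_A$ by density.

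For the second inequality I would expand $f$ in eigenfunctions of $-L_A$, which I expect to be tensor products of Laguerre/Hermite-type polynomials. The coordinate functions $x_i-\int x_i\,d\mu_A$ are eigenfunctions with eigenvalue $1$ exactly when $\alpha_i=0$. The linear correction $\langle\int fx\,d\mu_A,x\rangle$ is designed to remove the projection onto this eigenspace. On the complement of that eigenspace I expect the spectral gap to be at least $2$. With eigencoefficients $\lambda_k\ge2$, one has $\lambda_k-1\ge\tfrac12\lambda_k$, so $\E(f)-\operatorname{Var}(f)\ge\tfrac12\E(f-\text{proj})$. Care is needed when $\alpha_i>0$: then $x_i$ is not an eigenfunction, so the subtracted linear term is only partially an eigenprojection. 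I would handle this with the $\Gamma_2$ argument applied to $\nabla f$: since $\p_i$ intertwines $L_A$ up to a curvature term $\ge1$, the derivatives satisfy their own Poincaré inequality, which gives $\int\Gamma_2\ge2\int\Gamma$ once the means of the derivatives $\int\p_if\,d\mu_A=\int f(x_i-\alpha_i/x_i)\,d\mu_A$ are removed.

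For the attainment claim in the partial case, I would take $u=x_i$ with $\alpha_i=0$. Its gradient energy is $1$, and since $x_i$ is a standard Gaussian coordinate its variance is also $1$. The correction term then removes $u$ entirely, so the right-hand side of the second inequality is $0$ and equality holds there as well.
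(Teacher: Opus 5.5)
Your argument for the first inequality (from $CD(1,\infty)$, either spectrally or via the energy decay of $P_t$) is fine, and so is your equality example $u=x_i$ with $\alpha_i=0$. This is the $CD(1,\infty)$ route the paper attributes to \cite{LLR25}.

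The gap is in the second inequality. Set $f=u-\int u\,d\mu_A$ and $a=\int fx\,d\mu_A$. The spectral count $\lambda-1\geq\frac12\lambda$ for $\lambda\geq2$ only gives $\E(f)-\operatorname{Var}_{\mu_A}(f)\geq\frac12\int|\nabla f-a_{\mathcal Z}|^2d\mu_A$. Here $a_{\mathcal Z}$ keeps only the components with $\alpha_i=0$, because that is all the eigenvalue-$1$ projection removes. For $\alpha_i>0$, subtracting $a_ix_i$ changes $\int|\p_if|^2d\mu_A$ by $a_i^2-2a_im_i$, where $m_i=\int\p_if\,d\mu_A$. This quantity can be positive, so the claimed bound is strictly stronger than what the spectral count yields.

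Concretely, take $N=1$, $\alpha>0$ and $f=\psi_1-2\psi_2$ (notation of Section~\ref{sec:spectral}). Using $\int x^3d\mu_\alpha=(\alpha+2)\int x\,d\mu_\alpha$ and $\int x^5d\mu_\alpha=(\alpha+4)(\alpha+2)\int x\,d\mu_\alpha$, one gets $\int f'd\mu_\alpha=0$ but $a=-\frac14\int x\,d\mu_\alpha\neq0$. Hence $\int|f'-a|^2d\mu_\alpha=\E(f)+a^2>\E(f)$.

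Your proposed patch does not close this. A $\Gamma_2$/derivative--Poincar\'e bound for $f$ or $P_tf$ removes the means $m_i=\int f(x_i-\alpha_i/x_i)\,d\mu_A$, which form a different vector from $a_i=\int fx_i\,d\mu_A$. You give no way to pass from one to the other, nor from a bound on $\int\Gamma_2(f)$ to the deficit.

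The missing idea is to run that argument on the dual function rather than on $f$. This is the $L^2$ duality approach of \cite{FIL16, LLR25}, which the paper also uses in Section~\ref{sec:stability of BL}.

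Solve $-L_Ag=f$. Integrating by parts against $x_i$ gives exactly $\int\nabla g\,d\mu_A=a$, so $v=\nabla g-a$ has mean zero. Then $\nabla^2W\geq\mathbb{I}$, together with $P(1)$ applied to each $v_i$, gives
\[
\int f^2d\mu_A=\int\Gamma_2(g)\,d\mu_A\geq2\int|v|^2d\mu_A+|a|^2.
\]
Combining this with $\int f^2d\mu_A=\int\nabla f\cdot\nabla g\,d\mu_A$ yields the identity
\[
\E(f)-\int f^2d\mu_A-\frac12\int|\nabla f-a|^2d\mu_A=\frac12\int|\nabla f-2\nabla g+a|^2d\mu_A+\Bigl(\int f^2d\mu_A-2\int|v|^2d\mu_A-|a|^2\Bigr).
\]
Both terms on the right-hand side are nonnegative. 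This proves the second inequality with the full vector $a$, and it needs no spectral decomposition.
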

\begin{theorema}[\cite{LLR26}]\label{thm:B}
Let $w$ be positive on an open convex set $\Sigma$, with $-\nabla^2\log w\geq K_w\mathbb{I}$ on $\Sigma$ for some $K_w>-1$, and let $\mu_w$ be as above. Then for every $f\in W^{1,2}(\Sigma,\mu_w)$ and $1\leq p<2$,
\begin{equation*}
    (1+K_w)\operatorname{Var}_{\mu_w}(f)\leq\int_\Sigma|\nabla f|^2d\mu_w,\qquad \frac{\|f\|_2^2-\|f\|_p^2}{2-p}\leq\frac{1}{1+K_w}\int_\Sigma|\nabla f|^2d\mu_w,
\end{equation*}
\begin{equation*}
    \operatorname{Ent}_{\mu_w}(|f|^2)\leq\frac{2}{1+K_w}\int_\Sigma|\nabla f|^2d\mu_w,
\end{equation*}
where the norms are in $L^2(\mu_w)$ and $L^p(\mu_w)$. The Poincar\'e inequality also has a gradient stability estimate as in Theorem~\ref{thm:A}.
\end{theorema}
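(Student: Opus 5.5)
The approach is the Bakry--\'Emery $\Gamma$-calculus for the weighted Ornstein--Uhlenbeck operator $L_w=\Delta-\nabla W\cdot\nabla$, with potential $W=\frac{|x|^2}{2}-\log w$. The hypothesis $-\nabla^2\log w\geq K_w\mathbb{I}$ gives
\[
\nabla^2W\geq(1+K_w)\mathbb{I} \quad\text{on }\Sigma .
\]
By the Bochner formula we then get $\Gamma_2(f)\geq\|\nabla^2 f\|^2+(1+K_w)|\nabla f|^2\geq(1+K_w)\Gamma(f)$, i.e. $CD(1+K_w,\infty)$ holds pointwise in $\Sigma$. The main obstacle is that $\Sigma$ has a boundary and $w$ may vanish there. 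I would therefore work with the Neumann realization: $L_w$ is the self-adjoint operator of the closed form $\int_\Sigma|\nabla f|^2d\mu_w$ on $W^{1,2}(\Sigma,\mu_w)$. I need to justify that the semigroup $P_t$ satisfies the gradient commutation
\[
|\nabla P_tf|^2\leq e^{-2(1+K_w)t}P_t(|\nabla f|^2).
\]
On a convex domain the Neumann boundary term in the integrated Bochner identity is $-\mathrm{II}(\nabla f,\nabla f)\,w\,e^{-|x|^2/2}$, and this has the right sign by convexity. I would make this rigorous in two steps. First, approximate $\Sigma$ by smooth convex domains and $w$ by smooth positive weights, with $w$ replaced by $w+\varepsilon$ near $\partial\Sigma$ or by a log-concave regularization that preserves the Hessian bound. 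Second, pass to the limit using density of $C_c^\infty(\overline\Sigma)$ in $W^{1,2}$. An alternative, which avoids boundary regularity, is the Brascamp--Lieb/H\"ormander $L^2$ method. For $g=f-\int f\,d\mu_w$, solve $L_wu=g$ with Neumann data. Then integrate $\Gamma_2(u)$ by parts on the convex set, dropping the boundary term by convexity, to get
\[
\int g^2=\int\Gamma_2(u)\geq(1+K_w)\int|\nabla u|^2 ,
\]
and finally apply Cauchy--Schwarz. I would try the semigroup route first, because it also gives the other inequalities.

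With $CD(1+K_w,\infty)$ for $P_t$ in hand, the Poincar\'e inequality follows in the standard way. Write
\[
\operatorname{Var}(f)=2\int_0^\infty\!\!\int|\nabla P_tf|^2\,d\mu_w\,dt
\]
and bound the integrand by $e^{-2(1+K_w)t}\int|\nabla f|^2$; integrating in $t$ gives the constant $1/(1+K_w)$. The log-Sobolev inequality follows similarly, from the entropy derivative
\[
\frac{d}{dt}\operatorname{Ent}(P_t f^2)=-\int\frac{|\nabla P_tf^2|^2}{P_tf^2}\,d\mu_w
\]
together with the strong form $|\nabla P_th|\leq e^{-(1+K_w)t}P_t|\nabla h|$. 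This strong form follows from the pointwise $CD$ because the $\|\nabla^2f\|^2$ term allows the usual self-improvement. Cauchy--Schwarz then gives $\operatorname{Ent}(f^2)\leq\frac{2}{1+K_w}\int|\nabla f|^2$.

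For the Beckner inequalities with $1\leq p<2$, I would use hypercontractivity, which is equivalent by Gross to the log-Sobolev inequality just proved. It gives $\|P_tf\|_2\leq\|f\|_p$ at $e^{2(1+K_w)t}=1/(p-1)$. Then
\[
\|f\|_2^2-\|f\|_p^2\leq\|f\|_2^2-\|P_tf\|_2^2=2\int_0^t\!\!\int|\nabla P_sf|^2\,ds\leq\frac{1-e^{-2(1+K_w)t}}{1+K_w}\int|\nabla f|^2 .
\]
Since $1-e^{-2(1+K_w)t}=2-p$, this is exactly the stated bound.

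For the gradient stability of the Poincar\'e inequality, I would imitate Theorem~\ref{thm:A}. Write $\mathcal E(f)-(1+K_w)\operatorname{Var}(f)$ spectrally, or via $\int\Gamma_2(u)$ with $L_wu=g$, as a nonnegative quantity controlled below by $\int\|\nabla^2 u\|^2$. Then use a second Poincar\'e inequality applied to $\nabla u$ minus its mean to get $\frac12\int|\nabla(f-\langle v,x\rangle)|^2$-type control. The exact projection is onto the approximate first eigenspace, which in the general convex case is the span of the linear functions.
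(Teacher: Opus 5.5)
Your argument follows essentially the route the paper indicates for this cited result. Everything comes from the pointwise bound $\nabla^2W\geq(1+K_w)\mathbb{I}$, i.e.\ $CD(1+K_w,\infty)$, and your Beckner step (energy decay at rate $e^{-2(1+K_w)t}$ combined with Gross hypercontractivity at $e^{-2(1+K_w)t}=p-1$) is the same mechanism as the paper's proof of Theorem~\ref{thm:beckner}, with $CD(1+K_w,\infty)$ in place of $ICD(2,\infty)$.

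Two repairs are needed.

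\emph{Regularization.} Do not regularize by $w+\varepsilon$: it destroys $-\nabla^2\log w\geq K_w\mathbb{I}$ near $\partial\Sigma$ (for $w=x^\alpha$ with $\alpha>1$, $\sup_x(\log(x^\alpha+\varepsilon))''\to+\infty$ as $\varepsilon\to0$). Instead exhaust $\Sigma$ by smooth convex $\Sigma_n$ with $\overline{\Sigma_n}\subset\Sigma$, on which $w$ is already positive and $C^2$, so no modification of $w$ is needed.

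\emph{Gradient stability.} Your sketch does not yet reach control of $\nabla f$: $\int\|\nabla^2u\|^2d\mu_w$ alone only controls $\nabla u-a$. For a mean-zero $f$, expanding the deficit with $\int\nabla f\cdot\nabla u\,d\mu_w=\int f^2d\mu_w$ produces the extra square $\int|\nabla f-(1+K_w)\nabla u|^2d\mu_w$, which you must keep. Together with $(1+K_w)^2\int|\nabla u-a|^2d\mu_w$ it gives $\frac12\int|\nabla(f-(1+K_w)\langle a,x\rangle)|^2d\mu_w$, where $a=\int fx\,d\mu_w$.
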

 For log-concave homogeneous weights, and in particular for all monomial weights, one has $K_w=0$, so Theorems~\ref{thm:A} and \ref{thm:B} hold with the Gaussian constant $1$. For monomial weights, this constant is sharp in the Poincar\'e and Beckner inequalities, and the stability estimates of Theorems~\ref{thm:A} and \ref{thm:B} are genuine stability results, only when the weight is partial. Indeed, if $x^A$ does not depend on $x_j$, then $\mu_A$ contains a standard Gaussian factor in $x_j$, and the optimizers of the Poincar\'e inequality are the affine functions of $x_j$. When the weight is full, this Gaussian direction disappears. Then the constant $1$ in these inequalities is not optimal, they have no non-constant optimizers, and their stability estimates measure the distance to affine functions, which are not optimizers. This is a limitation of the method. Theorems~\ref{thm:A} and \ref{thm:B} are proved from the pointwise bound $\nabla^2W\geq(1+K_w)\mathbb{I}$, that is, from $CD(1+K_w,\infty)$. For the monomial weight, $\nabla^2W=\mathbb{I}+\operatorname{diag}\left(\frac{\alpha_i}{x_i^2}\right)\geq\mathbb{I}$, so $L_A$ satisfies $CD(1,\infty)$. But the constant $1$ in $CD(1,\infty)$ cannot be improved, even for full weights, because $\inf_{x_i>0}\frac{\alpha_i}{x_i^2}=0$. So no pointwise argument can improve the constant $1$ in Theorems~\ref{thm:A} and \ref{thm:B}. So the full monomial case needs a different approach, and it is the main subject of this paper. We show that for full weights the sharp Poincar\'e inequality is $\operatorname{Var}_{\mu_A}(f)\leq\frac12\int|\nabla f|^2d\mu_A$, so the constant improves from $1$ to $\frac12$, with optimizers $x_i^2-(\alpha_i+1)$, and that the  constant in the Beckner inequality improves from $1$ to $\frac p2$, and that the correct stability estimates measure the distance to the span of these quadratic optimizers. In particular, the deficit identity of Theorem~\ref{poincare deficit} and the stability results of Section~\ref{sec:spectral} are the full-weight counterparts of the stability results in \cite{LLR25, LLR26}. On the other hand, the logarithmic Sobolev constant $1$ is sharp for every monomial weight, including full weights (see Section~\ref{sec:poincare beckner}).

To go beyond the constant $1$, in the case of the full monomial weight, in other words when every $\alpha_i > 0$, we establish the integrated curvature-dimension condition $ICD(2,\infty)$, which is weaker than the pointwise curvature-dimension condition $CD(2,\infty)$. Namely, we will show that
\begin{equation}\label{eq:ICD intro}
    \int \Gamma_2(f) d\mu_A \geq 2 \int \Gamma(f) d\mu_A
\end{equation}
for every $f\in C^2(\overline{\mr_*^N})$ whose derivatives of order at most $2$ grow at most polynomially (see Theorem~\ref{thm:ICD}). This will in turn allow us to prove a sharp Poincar\'e inequality for the full monomial weight with a better constant and explicit optimizers. It was shown by Bakry, Gentil and Ledoux in \cite{BGL14} that an integrated condition is enough to prove the Poincar\'e inequality. More recently, Huguet used this approach in \cite{HUG24} to prove a sharp Poincaré inequality for generalized Cauchy measures from an integrated curvature-dimension criterion. In \cite{HUG26} he proved logarithmic Sobolev inequalities for these measures from a curvature-dimension criterion.

Once we establish $ICD(2,\infty)$, it allows us to prove a Beckner inequality with a better constant using the improvement in energy decay of the semigroup. The new Beckner inequality recovers the sharp Poincar\'e and the sharp logarithmic Sobolev constants at the endpoints.
We collect these results in the following theorem. Compare it with Theorem~\ref{thm:B} with $K_w=0$, where the Poincar\'e and Beckner constants are $1$.
\begin{theorem}\label{thm:main}
Assume $\alpha_i>0$ for all $i$. Then $\mu_A$ satisfies $ICD(2,\infty)$ in the form \eqref{eq:ICD intro}. Consequently, for every $f\in W^{1,2}_A(\mr_*^N,\mu_A)$,
\begin{equation*}
    \operatorname{Var}_{\mu_A}(f)\leq\frac12\int_{\mr_*^N}|\nabla f|^2d\mu_A,
\end{equation*}
with equality if and only if $f-\int_{\mr_*^N}f\,d\mu_A$ is a linear combination of $x_i^2-(\alpha_i+1)$, $i=1,\dots,N$. Moreover, for every $p\in[1,2)$,
\begin{equation*}
    \frac{\|f\|_2^2-\|f\|_p^2}{2-p}\leq\frac p2\int_{\mr_*^N}|\nabla f|^2d\mu_A,
\end{equation*}
where the norms are in $L^2(\mu_A)$ and $L^p(\mu_A)$. The constant $\frac p2$ is sharp at $p=1$ and as $p\to2$, where the inequality becomes the sharp logarithmic Sobolev inequality with constant $1$. For $1<p<2$ the sharp constant lies in $[\frac12,\frac p2]$ though we do not know its exact value.
\end{theorem}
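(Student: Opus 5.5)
The plan is to prove $ICD(2,\infty)$ by a one-dimensional ground-state (Hardy-type) identity, and then run the standard semigroup machinery. By the Bochner formula with $\nabla^2W=\mathbb I+\operatorname{diag}(\alpha_i/x_i^2)$ we have
\[
\Gamma_2(f)=\|\nabla^2 f\|^2+|\nabla f|^2+\sum_{i=1}^N\frac{\alpha_i}{x_i^2}(\partial_i f)^2 .
\]
We drop the mixed second derivatives and keep only $(\partial_{ii}f)^2$. Then \eqref{eq:ICD intro} reduces to showing, for each $i$ and $g=\partial_i f$,
\[
\int\Big((\partial_i g)^2+\frac{\alpha_i}{x_i^2}g^2\Big)d\mu_A\ \ge\ \int g^2\,d\mu_A .
\]

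This is where fullness enters. The function $x_i$ is a ground state: $L_A x_i=\alpha_i/x_i-x_i$, so $(-L_A+\alpha_i/x_i^2)x_i=x_i$. Accordingly I would substitute $g=x_i h$ and integrate by parts in $x_i$ against $x_i^{\alpha_i}e^{-x_i^2/2}$. This should give the identity
\[
\int\Big((\partial_i g)^2+\frac{\alpha_i}{x_i^2}g^2\Big)d\mu_A=\int g^2\,d\mu_A+\int x_i^2(\partial_i h)^2\,d\mu_A .
\]
The boundary term at $x_i=0$ carries a factor $x_i^{\alpha_i+1}g^2/x_i\sim x_i^{\alpha_i}g^2$. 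It vanishes because $\alpha_i>0$ and $g$ is bounded near the boundary, since $f\in C^2(\overline{\mr_*^N})$. The term at infinity vanishes by the Gaussian factor and the polynomial growth assumption. The dropped terms are nonnegative, which proves $ICD(2,\infty)$. Equality forces $\partial_{jk}f=0$ for $j\ne k$ and $\partial_i h=0$, that is $\partial_i f=c_ix_i$, so $f=\text{const}+\sum_i \tfrac{c_i}{2}x_i^2$. Centering gives exactly the span of $x_i^2-(\alpha_i+1)$; note that $\int x_i^2\,d\mu_A=\alpha_i+1$ by the same integration by parts.

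For the Poincar\'e inequality I would use the energy decay argument of \cite{BGL14}. Apply $ICD(2,\infty)$ to $P_tf$; this needs regularity and polynomial growth of $P_tf$, which I would justify for $f$ polynomial or compactly supported and then extend by density. We get $\frac{d}{dt}\int\Gamma(P_tf)\,d\mu_A=-2\int\Gamma_2(P_tf)\,d\mu_A\le-4\int\Gamma(P_tf)\,d\mu_A$, hence $\int\Gamma(P_tf)\le e^{-4t}\int\Gamma(f)$. Since $\operatorname{Var}(f)=2\int_0^\infty\!\int\Gamma(P_tf)\,d\mu_A\,dt$, this gives $\operatorname{Var}(f)\le\frac12\int|\nabla f|^2d\mu_A$. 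Equivalently, $\int (L_Af)^2=\int\Gamma_2(f)\ge2\int f(-L_A f)$ shows the spectral gap is at least $2$. Functions attaining equality are eigenfunctions with eigenvalue $2$. These are smooth by elliptic regularity, and I expect them to have polynomial growth via the Laguerre structure. The equality case of ICD then identifies them. Conversely, $L_A(x_i^2-(\alpha_i+1))=-2(x_i^2-(\alpha_i+1))$ is a direct check.

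For the Beckner inequality I would combine hypercontractivity with the improved decay. Theorem~\ref{thm:B} with $K_w=0$ gives $LS(1,0)$, and Gross's theorem then yields $\|P_tf\|_2\le\|f\|_p$ for $p=1+e^{-2t}$. Hence
\[
\|f\|_2^2-\|f\|_p^2\le\|f\|_2^2-\|P_tf\|_2^2=2\int_0^t\!\!\int\Gamma(P_sf)\,d\mu_A\,ds\le\frac{1-e^{-4t}}{2}\int\Gamma(f)\,d\mu_A .
\]
With $e^{-2t}=p-1$ the prefactor is $\frac{1-(p-1)^2}{2}=\frac{p(2-p)}{2}$, which gives the constant $\frac p2$. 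At $p=1$ the inequality is exactly the sharp Poincar\'e inequality; linearizing around constants confirms this. Letting $p\to2$ gives $\operatorname{Ent}(f^2)\le2\int\Gamma(f)$. For sharpness of the constant $1$ there, I would test $f=e^{\lambda x_1}$ as $\lambda\to\infty$: the measure becomes Gaussian-like in the translated variable, and the weight becomes asymptotically flat. The bracket $[\frac12,\frac p2]$ for $1<p<2$ follows by linearization, which gives the lower bound $\frac12$.

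The main obstacle I expect is the regularity step: justifying ICD along $P_tf$ and characterizing the equality cases within $W^{1,2}_A$ under the Neumann realization. I would handle it by working first on polynomials, which are dense and invariant under $L_A$, and then passing to limits.
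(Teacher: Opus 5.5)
Your overall route matches the paper's, but the \emph{only if} half of the equality statement is not proved, and there are two smaller slips.

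The ground-state identity is Lemma~\ref{lemma1} in disguise: with $g=x_ih$ one has $x_i^2(\partial_ih)^2=(\partial_ig-g/x_i)^2$, which is exactly the square the paper expands. The Beckner step, Gross's theorem from $LS(1,0)$ combined with the $e^{-4t}$ energy decay at $e^{-2t}=p-1$, is identical to the paper's.

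The first slip is the boundary term. Integrating $2x_ih\,\partial_ih$ against $x_i^{\alpha_i}e^{-x_i^2/2}$ produces $x_i^{\alpha_i+1}h^2e^{-x_i^2/2}=x_i^{\alpha_i-1}g^2e^{-x_i^2/2}$, not $x_i^{\alpha_i}g^2$. So for $0<\alpha_i\leq1$ it does not vanish on slices where $\partial_if(0,x_{\neq i})\neq0$. The repair is the paper's case split. On such slices $\int_0^\infty\alpha_ig^2x_i^{-2}\,d\mu_{\alpha_i}=+\infty$, so the slice inequality is trivial. On the other slices either $g=O(x_i)$ or $\alpha_i>1$, and the term vanishes.

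The second slip is the density class. Polynomials are not invariant under $L_A$; you computed $L_Ax_i=\alpha_i/x_i-x_i$ yourself. Only polynomials in $x_1^2,\dots,x_N^2$ are invariant.

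The genuine gap is the \emph{only if} direction. Your argument needs every eigenvalue-$2$ eigenfunction of the Neumann realization to be $C^2$ up to $\partial\mr_*^N$ with polynomially growing derivatives. Only then does the equality case of $ICD(2,\infty)$ apply to it, and you only say you expect this.

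This is not routine. $L_A$ is singular at $x_i=0$, and for $\alpha_i<3$ that endpoint is limit circle. The second solution behaves like $x_i^{1-\alpha_i}$ (or $\log x_i$ if $\alpha_i=1$), and it even has finite energy near $x_i=0$ when $\alpha_i<1$. Other self-adjoint realizations have eigenfunctions built from it. So interior elliptic regularity cannot settle the boundary behavior; you must use the Neumann condition encoded in $W^{1,2}_A$.

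The missing ingredient, which the paper uses, is completeness in $L^2(\mu_A)$ of the tensorized Laguerre polynomials $\Psi_k(x)=\prod_iL^{\beta_i}_{k_i}(x_i^2/2)$, $\beta_i=\frac{\alpha_i-1}{2}$. These satisfy $-L_A\Psi_k=2|k|\Psi_k$ and lie in the Neumann domain because $\psi_k'(x)=O(x)$ at $0$. With $c_j=\|\Pi_jf\|_{L^2(\mu_A)}$ this gives $\operatorname{Var}_{\mu_A}(f)=\sum_{j\geq1}c_j^2\leq\frac12\sum_{j\geq1}2jc_j^2=\frac12\E(f)$. Equality holds if and only if $c_j=0$ for $j\geq2$, that is, $f-\int f\,d\mu_A\in E_1=\operatorname{span}\{x_i^2-(\alpha_i+1)\}$.

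The same expansion gives $\E(P_tf)=\sum_{j\geq1}2je^{-4jt}c_j^2\leq e^{-4t}\E(f)$ for all $f\in W^{1,2}_A$. This also removes the semigroup-regularity issue you flagged in both the Poincar\'e and the Beckner steps.
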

Theorem~\ref{thm:main} is proved in Section~\ref{sec:poincare beckner}.

Furthermore, we will analyze the spectral decomposition of the operator $L_A$. We will show that, under a suitable transformation, the operator becomes a Laguerre operator which has a discrete spectrum given by even integers. Once the full spectral decomposition is known, we can further analyze the Poincar\'e deficit and give an exact identity for the remainder.

\begin{theorem}\label{poincare deficit}
Assume $\alpha_i > 0$ for all $i$. For $j\geq0$ let $\Pi_j$ be the orthogonal projection in $L^2(\mu_A)$ onto the eigenspace of $-L_A$ with eigenvalue $2j$ (see Section~\ref{sec:spectral}), and define the remainders
$$R_iu = u -\Pi_0u- \Pi_1u - \dots -\Pi_iu = \sum_{j =i+1}^\infty \Pi_ju, \qquad i\geq1. $$
Then for all $u \in W^{1,2}_A(\mr_*^N, \mu_A)$
\begin{equation}
        \int_{\mr_*^N} |\nabla u|^2  d\mu_A - 2\int_{\mr_*^N} \left|u - \int_{\mr_*^N} u  d\mu_A\right|^2 d\mu_A  = \sum_{i=1}^\infty b_i \int_{\mr_*^N} \left|\nabla R_iu\right|^2 d\mu_A, \label{eq:poincare deficit}%
\end{equation}
where $b_i = \frac{1}{i(i+1)}$.
\end{theorem}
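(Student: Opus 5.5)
This is a purely spectral computation. I will use the decomposition $u=\sum_{j\ge0}\Pi_ju$ in $L^2(\mu_A)$ into eigenspaces of $-L_A$ with eigenvalues $2j$. The completeness of this decomposition comes from the Laguerre reduction of Section~\ref{sec:spectral}: after the change of variables $t_i=x_i^2/2$, $L_A$ splits as a sum of one-dimensional Laguerre operators. The eigenfunctions are products of Laguerre polynomials $L_{k_i}^{(\beta_i)}(x_i^2/2)$ with $\beta_i=(\alpha_i-1)/2$, with eigenvalue $2(k_1+\dots+k_N)$, and the Laguerre polynomials are complete in the relevant weighted $L^2$ spaces.

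The first step is to record the two quadratic-form identities. Let $a_j=\|\Pi_ju\|_{L^2(\mu_A)}^2$. Since $L_A$ is the operator associated with the closed form $\E$ and $\Pi_j$ commutes with it, for $u\in W^{1,2}_A$ we have
\[
\int|\nabla u|^2d\mu_A=\sum_{j\ge1}2j\,a_j,\qquad \operatorname{Var}_{\mu_A}(u)=\sum_{j\ge1}a_j .
\]
The same identity applied to the remainder gives $\int|\nabla R_iu|^2d\mu_A=\sum_{j\ge i+1}2j\,a_j$. This requires $R_iu\in W^{1,2}_A$, which holds because each $\Pi_ju$ lies in the form domain and the form domain is $D((-L_A)^{1/2})$. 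The left-hand side of \eqref{eq:poincare deficit} is therefore $\sum_{j\ge2}(2j-2)a_j$.

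The second step compares coefficients. Interchanging the nonnegative double sum (Tonelli), the right-hand side becomes
\[
\sum_{i\ge1}b_i\sum_{j\ge i+1}2j\,a_j=\sum_{j\ge2}2j\,a_j\sum_{i=1}^{j-1}\frac{1}{i(i+1)}=\sum_{j\ge2}2j\,a_j\Bigl(1-\frac1j\Bigr)=\sum_{j\ge2}(2j-2)a_j .
\]
This uses the telescoping identity $\sum_{i=1}^{j-1}\bigl(\frac1i-\frac1{i+1}\bigr)=1-\frac1j$, and it matches the left-hand side term by term, giving the identity.

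The main obstacle is not this algebra but the spectral input: showing that $-L_A$ (Neumann realization) has pure point spectrum $\{2j\}$ with complete eigenfunctions in $L^2(\mu_A)$, and that the form domain coincides with the $W^{1,2}_A$ closure. I would take this from Section~\ref{sec:spectral}. The key point is that in each variable $x_i$ the even-degree polynomials in $x_i$ are dense in $L^2(x_i^{\alpha_i}e^{-x_i^2/2}dx_i)$ on $(0,\infty)$, which follows from the Gaussian moment bounds. The Neumann boundary condition selects the Laguerre family rather than odd-type eigenfunctions, since $\alpha_i>0$ makes the weight vanish at $0$. Given completeness, the only remaining analytic point is the justification of the energy identity, which follows from the spectral theorem for the form.
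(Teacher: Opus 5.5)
Your proposal is correct and follows essentially the paper's proof. Both arguments expand $u$ over the eigenspaces $E_j$ and reduce the deficit to $\sum_{j\geq2}(2j-2)\|\Pi_ju\|^2$, using $\int|\nabla R_iu|^2d\mu_A=\sum_{j\geq i+1}2j\|\Pi_ju\|^2$. The only difference is bookkeeping: the paper peels off the remainders one at a time via $\frac{2}{\ell}(i-\ell)=2ib_\ell+\frac{2}{\ell+1}(i-\ell-1)$ and then shows the tail vanishes, whereas you interchange the nonnegative double sum by Tonelli and use the telescoping identity $\sum_{i=1}^{j-1}\frac{1}{i(i+1)}=1-\frac1j$, which is cleaner and needs no separate tail estimate.
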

In particular, this proves sharp gradient stability and an improved version of the Poincar\'e inequality for full weights, where the corresponding estimates of \cite{LLR25, LLR26} are not sharp.

Quantitative stability of a sharp functional inequality asks that the deficit control the distance to the set of optimizers, a program initiated for the Sobolev inequality in \cite{BE91}, after a question of Brezis and Lieb \cite{BL85}, and since carried out in gradient form \cite{FN19, FZ22} and with optimal dimensional dependence \cite{DEFFL}. For remainder terms, quantitative forms and sharp stability constants in Sobolev and Hardy--Littlewood--Sobolev inequalities, see also \cite{CFMP09, Kon23, CLT242, CLT243, Carlen}. See also \cite{CF13} for the Gagliardo--Nirenberg--Sobolev family, \cite{FIL16, CLT23} and \cite{CLT242} for logarithmic Sobolev inequalities, and \cite{CFLL24, F21, DLL26, DN26, LLLS26-ANS,
LLLS26, DLLN26} for the Heisenberg uncertainty principle and the Caffarelli-Kohn-Nirenberg inequalities. For the Poincaré inequality the optimizers are the constants together with the first nontrivial eigenspace, and the relevant results are those of \cite{CF20, CF26, Ser23} for log-concave measures and of \cite{LLR25, LLR26, DLLN26, LLLS26, LLLS26-ANS} for weighted Gaussian measures on cones, half-spaces and orthants. Theorem \ref{poincare deficit} turns this into an exact identity for $\mu_A$. Exact decompositions of the Poincar\'e deficit along the spectrum, with sharp $L^2$ and gradient stability hierarchies, were obtained by Lam and Lu \cite{LL26} on Euclidean balls and for the standard Gaussian measure.

Lastly, we will prove the $L^2$ stability of the Brascamp--Lieb inequality
\begin{equation}
    \operatorname{Var}_{\mu_A}(f) \leq \int_{\mr_*^N} \langle (\nabla^2 W)^{-1} \nabla f, \nabla f\rangle d\mu_A. \label{ineq:Brascamp Lieb}%
\end{equation}
For a probability measure $d\mu=Z^{-1}e^{-W}dx$ with $W$ convex (below $\mu=\mu_A$ or $\mu=\mu_w$) and $f\in L^2(\mu)$ with $\int\langle(\nabla^2W)^{-1}\nabla f,\nabla f\rangle\,d\mu<\infty$ we denote the Brascamp--Lieb deficit by
\begin{equation*}
    \delta_{BL}(f)=\int\langle(\nabla^2W)^{-1}\nabla f,\nabla f\rangle\,d\mu-\operatorname{Var}_{\mu}(f)\geq0 .
\end{equation*}
Equality holds for functions $f=\langle\nabla W,c\rangle$, where $c\in\mr^N$, whenever $\nabla W\in L^2(\mu)$. For $\mu=\mu_A$ this means $\alpha_i>1$ for all $i$ (see Lemma~\ref{lem:monomial hypotheses}). Note that $\langle\nabla W,c\rangle\notin W^{1,2}_A(\mr_*^N,\mu_A)$ when $c_i\neq0$ and $1<\alpha_i\leq3$, since $\p_i^2W=1+\alpha_i/x_i^2$, but $\delta_{BL}(\langle\nabla W,c\rangle)$ is still well defined.
The inequality \eqref{ineq:Brascamp Lieb} is due to Brascamp and Lieb \cite{BL76}. Bobkov and Ledoux \cite{BL00} derived it, together with the logarithmic Sobolev inequality, from the Brunn--Minkowski and Prékopa--Leindler inequalities. Reinforcements and dimensional refinements were obtained in \cite{Har08, VHN14}, asymmetric $L^p$--$L^q$ covariance versions in \cite{CCL13, VHN19}, an intertwining formulation in \cite{ABJ18}, and the extension to weighted Riemannian manifolds with boundary in \cite{KM17}. Quantitative forms with explicit extremizers appear already in \cite{CE17}, and stability in dimension one, in total variation and Kolmogorov distance, in \cite{Ser23}.

Sharp stability for the Prékopa--Leindler and Borell--Brascamp--Lieb inequalities was established in \cite{FHT25}, following \cite{BFR24}. Machado and Ramos \cite{MR26} used this to prove a uniform $L^1$ stability estimate for \eqref{ineq:Brascamp Lieb},
\begin{equation*}
    \mathrm{dist}_{L^1(\mu)}\left(f,\mathcal{O}_{BL}\right) \leq C_N\,\delta_{BL}(f)^{1/2},
\end{equation*}
where $\mathcal{O}_{BL}=\{\langle\nabla W,c\rangle:c\in\mr^N\}$ and $C_N$ is independent of the potential, and showed that no uniform estimate of this kind holds in $L^p$ for any $p>1$. Böröczky, Qiu and Roberto \cite{BQR26} bootstrap this to $L^2$ under an additional hypothesis, which they call the super-Brascamp--Lieb inequality.

We follow the $L^2$ duality approach used, for instance, in \cite{FIL16, LLR25}. In particular, we show the following.
\begin{theorem}[Stability of Brascamp-Lieb inequality] \label{thm:BL stability} Let $\alpha_i > 1$ for all $i$ and $W = \frac{|x|^2}{2} - \log x^A$. Then for $f\in W^{1,2}_A(\mr_*^N,\mu_A)$ such that $\int_{\mr_*^N} f d\mu_A = 0$ we have
\begin{equation}
    \delta_{BL}(f) \geq \kappa_A \inf_{c\in\mr^N} \int_{\mr_*^N} \left| f - \left\langle \nabla W, c\right\rangle\right|^2 d\mu_A, \label{eq:BL Stability}
\end{equation}
where $\kappa_A = \left(\max_{i}\left[\frac{1}{2} + \alpha_i\left(\frac{1+\sqrt{\frac{\alpha_i+1}{2}}}{\alpha_i-1}\right)^2\right]\right)^{-1}$.
\end{theorem}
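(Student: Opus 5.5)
The plan is to use the $L^2$ duality approach. Fix $f\in W^{1,2}_A(\mr_*^N,\mu_A)$ with $\int f\,d\mu_A=0$, and write $H=\nabla^2W=\mathbb{I}+\operatorname{diag}(\alpha_i/x_i^2)$.

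\emph{Step 1 (Poisson equation and deficit identity).} Since $-L_A$ has a spectral gap (Theorem~\ref{thm:main}), I solve $-L_Au=f$ with $\int u\,d\mu_A=0$. Then
\[
\int f^2\,d\mu_A=\int\nabla f\cdot\nabla u\,d\mu_A .
\]
By the integrated Bochner formula,
\[
\int (L_Au)^2\,d\mu_A=\int\Gamma_2(u)\,d\mu_A=\int\|\nabla^2u\|^2\,d\mu_A+\int\langle H\nabla u,\nabla u\rangle\,d\mu_A .
\]
This is the same integration by parts that underlies the proof of Theorem~\ref{thm:ICD}. Expanding $\int|H^{-1/2}\nabla f-H^{1/2}\nabla u|^2\,d\mu_A$ and using both identities gives
\[
\delta_{BL}(f)=\int\bigl|H^{-1/2}\nabla f-H^{1/2}\nabla u\bigr|^2\,d\mu_A+\int\|\nabla^2u\|^2\,d\mu_A\;\geq\;\int\|\nabla^2u\|^2\,d\mu_A .
\]
So it suffices to show that $\int\|\nabla^2u\|^2\,d\mu_A$ controls the $L^2$ distance from $f$ to $\mathcal O_{BL}$.

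\emph{Step 2 (choice of $c$).} I take $c=\int\nabla u\,d\mu_A$ and set $v=u-\langle c,x\rangle$. Since $L_A\langle c,x\rangle=-\langle\nabla W,c\rangle$, we get
\[
f-\langle\nabla W,c\rangle=-L_Av,\qquad \nabla^2v=\nabla^2u,\qquad \int\partial_iv\,d\mu_A=0\ \text{for all } i .
\]
Applying Step 1's Bochner identity to $v$ gives
\[
\int|f-\langle\nabla W,c\rangle|^2\,d\mu_A=\int\|\nabla^2u\|^2\,d\mu_A+\sum_i\int\Bigl(1+\frac{\alpha_i}{x_i^2}\Bigr)(\partial_iv)^2\,d\mu_A .
\]
Each $g_i=\partial_iv$ has mean zero, so the sharp Poincar\'e inequality of Theorem~\ref{thm:main} gives $\int g_i^2\,d\mu_A\leq\frac12\int|\nabla g_i|^2\,d\mu_A$. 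This is the source of the term $\frac12$ in $\kappa_A$.

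\emph{Step 3 (weighted Hardy estimate).} This is the step where $\alpha_i>1$ is needed. Integrate by parts in $x_i$ against $\partial_{x_i}\bigl(x_i^{\alpha_i-1}e^{-x_i^2/2}\bigr)=\bigl((\alpha_i-1)x_i^{\alpha_i-2}-x_i^{\alpha_i}\bigr)e^{-x_i^2/2}$. The boundary terms vanish because $\alpha_i>1$. This yields
\[
(\alpha_i-1)\int\frac{g^2}{x_i^2}\,d\mu_A=\int g^2\,d\mu_A-2\int\frac{g\,\partial_ig}{x_i}\,d\mu_A .
\]
Apply Cauchy--Schwarz to the last term, and bound $\int g^2\,d\mu_A$ by Step 2's Poincar\'e inequality (or more carefully by $\int g\,x_i\,\partial_ig$-type identities, which is where $\frac{\alpha_i+1}{2}$ should enter). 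Solving the resulting quadratic inequality in $\bigl(\int g^2/x_i^2\,d\mu_A\bigr)^{1/2}$ should give
\[
\int\frac{g^2}{x_i^2}\,d\mu_A\leq\Bigl(\frac{1+\sqrt{(\alpha_i+1)/2}}{\alpha_i-1}\Bigr)^2\int|\nabla g|^2\,d\mu_A .
\]
Combining Steps 1--3 coordinate by coordinate, with $\sum_i\int|\nabla\partial_iu|^2\,d\mu_A=\int\|\nabla^2u\|^2\,d\mu_A$, gives
\[
\int|f-\langle\nabla W,c\rangle|^2\,d\mu_A\leq\kappa_A^{-1}\,\delta_{BL}(f).
\]
I expect the exact bookkeeping to require absorbing the term $\int\|\nabla^2u\|^2\,d\mu_A$ from Step 2 using the first square in Step 1's deficit identity, so as to land precisely on $\frac12+\alpha_i(\cdot)^2$. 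I would arrange the Cauchy--Schwarz weights to achieve this.

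\emph{Main obstacle.} Beyond matching the constant in Step 3, the main difficulty is justification. The solution $u$ must be regular enough for the Bochner identity and the integrations by parts near $\{x_i=0\}$, and $\partial_iu$ must lie in $W^{1,2}_A$. I would handle this with the Laguerre spectral decomposition of Section~\ref{sec:spectral}: first prove the estimate for $f$ a finite combination of eigenfunctions (polynomials in $x_i^2$, for which everything is explicit), then pass to general $f$ by density, using that both sides of \eqref{eq:BL Stability} are continuous in the $W^{1,2}_A$ norm.
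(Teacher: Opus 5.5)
Your outline matches the paper's proof except for one step, and that step is the one that produces the stated constant. Write $\mathcal C=\int|H^{-1/2}\nabla f-H^{1/2}\nabla u|^2d\mu_A$ and $\mathcal A=\int\|\nabla^2u\|^2d\mu_A$, so that Step 1 reads $\delta_{BL}(f)=\mathcal C+\mathcal A$. Write $\mathcal D=\sum_i\int(1+\alpha_i/x_i^2)g_i^2\,d\mu_A$, so that Step 2 reads $\int|f-\langle\nabla W,c\rangle|^2d\mu_A=\mathcal A+\mathcal D$. Steps 2--3 (Poincar\'e with constant $\frac12$ plus the Hardy-type identity) give $\mathcal D\le\kappa_A^{-1}\mathcal A$, as in the paper. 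But if you then use only $\delta_{BL}(f)\ge\mathcal A$, as you propose at the end of Step 1, you get $\mathcal A+\mathcal D\le(1+\kappa_A^{-1})\,\delta_{BL}(f)$. That is the constant $\kappa_A/(1+\kappa_A)$, not $\kappa_A$. The missing unit is not lost in the Hardy step, so re-weighting the Cauchy--Schwarz inequality there will not recover it. It is lost when you discard $\mathcal C$, and it has to be recovered from $\mathcal C$.

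The missing idea is the duality bound $\mathcal A^2\le\mathcal C\mathcal D$. Since $L_A$ kills constants and $\nabla L_Au=L_A\nabla u-H\nabla u$, you have $(-L_A)\nabla v=(-L_A)\nabla u=\nabla f-H\nabla u$. So $H^{-1/2}(-L_A)\nabla v$ is exactly the integrand of your first square, and
\[
\mathcal A=\int\langle(-L_A)\nabla v,\nabla v\rangle\,d\mu_A=\int\langle H^{-1/2}(-L_A)\nabla v,\,H^{1/2}\nabla v\rangle\,d\mu_A\le\mathcal C^{1/2}\mathcal D^{1/2}.
\]
You must pair with the shifted field $\nabla v=\nabla u-c$, so that the same $\mathcal D$ appears as in the distance. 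Together with $\mathcal D\le\kappa_A^{-1}\mathcal A$ this gives $\mathcal C\ge\mathcal A^2/\mathcal D\ge\kappa_A\mathcal A$. Hence $\delta_{BL}(f)=\mathcal C+\mathcal A\ge\kappa_A\mathcal A+\kappa_A\mathcal D=\kappa_A\int|f-\langle\nabla W,c\rangle|^2d\mu_A$.

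Two smaller points. In Step 3 no extra identity is needed: with the plain Poincar\'e bound the quadratic is $(\alpha_i-1)y^2-2y-\frac12\le0$, and its positive root is the stated one because $1+\frac{\alpha_i-1}{2}=\frac{\alpha_i+1}{2}$. Solving that quadratic does require knowing beforehand that $\int g_i^2/x_i^2\,d\mu_A<\infty$, which in your polynomial class holds precisely because $\alpha_i>1$. Your density argument through finite Laguerre expansions is a fine substitute for the paper's cutoff lemma, since $H^{-1}\le\mathbb{I}$ and $\nabla W\in L^2(\mu_A)$ make both sides continuous in $W^{1,2}_A$.
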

We extend this result to a more general homogeneous setting under several additional assumptions, which in the monomial case reduce to the condition $\alpha_i >1$ (see Lemma~\ref{lem:monomial hypotheses}). Let $\Sigma\subset\mr^N$ be an open convex cone and let $w\in C(\overline\Sigma)\cap C^2(\Sigma)$ be positive in $\Sigma$ and homogeneous of degree $\alpha>0$. Let $\mu_w$, $L_w$ and $W^{1,2}(\Sigma,\mu_w)$ be as above. 

\begin{theorem}[ Stability of Brascamp-Lieb inequality with homogeneous weights]
\label{thm:BL stability homogeneous}
    Let $w$ be homogeneous of degree $\alpha>0$, log-concave, on an open convex cone $\Sigma=\{w>0\}\subset\mr^N$, with $w\in C(\overline\Sigma)\cap C^2(\Sigma)$ and
    $w=0$ on $\p\Sigma$. Write $\psi=\log w$, $W=\frac{|x|^2}{2}-\psi$
    and $B=\nabla^2W=\mathbb{I}-\nabla^2\psi$. Assume also
    \begin{equation}
        \Delta(w^{\theta}) \geq 0\qquad  \text{for some }\theta\in(0,1).
        \label{eq:BLstab hypotheses}
    \end{equation}
    Together with the other assumptions, \eqref{eq:BLstab hypotheses} implies $\nabla\psi\in L^2(\mu_w)$, see Lemma~\ref{lemma:IBPw identity}.
    Then for every $f\in W^{1,2}(\Sigma,\mu_w)$ with
    $\int_\Sigma f\,d\mu_w=0$,
    \begin{equation}
        \delta_{BL}(f) \geq \kappa_w \inf_{c\in\mr^N} \int_\Sigma|f-\langle\nabla W,c\rangle|^2d\mu_w,
        \label{eq:BLstab conclusion}
    \end{equation}
    where
    \begin{equation}
        \kappa_w=\left(\frac{1}{\lambda_w}+\theta \left(\frac{1+\sqrt{1+\frac{(1-\theta)\alpha}{\lambda_w}}}{1-\theta}\right)^2\right)^{-1}
        \label{eq:BLstab kappa}
    \end{equation}
    and $\lambda_w>0$ is any constant such that $\lambda_w\operatorname{Var}_{\mu_w}(g)\leq\int_\Sigma|\nabla g|^2d\mu_w$ for all $g\in W^{1,2}(\Sigma,\mu_w)$. By Theorem~\ref{thm:B} one can take $\lambda_w=1$, and for the full monomial weight one can take $\lambda_w=2$ by Theorem~\ref{thm:main}.
\end{theorem}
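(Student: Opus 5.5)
The plan is an $L^2$ duality argument built on the Poisson equation for $L_w$. Fix $f\in W^{1,2}(\Sigma,\mu_w)$ with $\int_\Sigma f\,d\mu_w=0$. By the Poincar\'e inequality with constant $\lambda_w$ and Lax--Milgram, there is a unique mean-zero $u\in W^{1,2}(\Sigma,\mu_w)$ with $-L_wu=f$ weakly (Neumann realization). Then
\[
\operatorname{Var}_{\mu_w}(f)=\int_\Sigma\nabla f\cdot\nabla u\,d\mu_w .
\]
By the integrated Bochner formula,
\[
\int_\Sigma f^2\,d\mu_w=\int_\Sigma(L_wu)^2\,d\mu_w=\int_\Sigma\|\nabla^2u\|^2\,d\mu_w+\int_\Sigma\langle B\nabla u,\nabla u\rangle\,d\mu_w+(\text{boundary term}).
\]
The boundary term is $\geq0$ by convexity of $\Sigma$ (second fundamental form with the Neumann condition). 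Moreover, since $w=0$ on $\p\Sigma$, weighted boundary integrals should in fact vanish after approximation.

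Completing the square gives
\[
\int_\Sigma\langle B^{-1}\nabla f,\nabla f\rangle\,d\mu_w=\int_\Sigma\bigl|B^{-1/2}\nabla f-B^{1/2}\nabla u\bigr|^2d\mu_w+2\int_\Sigma\nabla f\cdot\nabla u\,d\mu_w-\int_\Sigma\langle B\nabla u,\nabla u\rangle\,d\mu_w .
\]
Combining this with the two identities above yields the key lower bound
\[
\delta_{BL}(f)\geq\int_\Sigma\|\nabla^2u\|^2\,d\mu_w .
\]
It remains to bound $\int_\Sigma|f-\langle\nabla W,c\rangle|^2d\mu_w$ by a constant times $\int_\Sigma\|\nabla^2 u\|^2\,d\mu_w$ for a good choice of $c$.

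Take $c=\int_\Sigma\nabla u\,d\mu_w$ and set $v=\nabla u-c$. Applying the Poincar\'e inequality componentwise gives
\[
\int_\Sigma|v|^2\,d\mu_w\leq\lambda_w^{-1}\int_\Sigma\|\nabla^2u\|^2\,d\mu_w,
\]
which accounts for the term $1/\lambda_w$ in $\kappa_w$. Since $\langle\nabla W,c\rangle=-L_w(c\cdot x)$, we have
\[
f-\langle\nabla W,c\rangle=-L_w(u-c\cdot x)=-\operatorname{div}v+\langle x,v\rangle-\langle\nabla\psi,v\rangle .
\]
Rather than expanding the square of this directly, I would estimate the norm by duality: pair against a test function $g$ and integrate by parts, so that the only terms not controlled by $\|\nabla v\|_{L^2}$ and $\|v\|_{L^2}$ are those carrying the singular weight $\nabla\psi$ (or $-\nabla^2\psi$). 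Homogeneity enters through $x\cdot\nabla\psi=\alpha$, which is what produces the parameter $\alpha$ in the constant.

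The main obstacle is a weighted Hardy-type inequality controlling $\int_\Sigma|\nabla\psi|^2g^2\,d\mu_w$ (equivalently $\int_\Sigma\langle-\nabla^2\psi\,v,v\rangle\,d\mu_w$) by the Dirichlet energy. This is where \eqref{eq:BLstab hypotheses} is used. From
\[
\Delta(w^\theta)=\theta w^\theta\bigl(\Delta\psi+\theta|\nabla\psi|^2\bigr)\geq0
\]
we get $-\Delta\psi\leq\theta|\nabla\psi|^2$. Write $\nabla\psi\,w=\nabla w$ and integrate $\int_\Sigma|\nabla\psi|^2g^2\,d\mu_w$ by parts against $\nabla\psi$; boundary terms vanish since $w=0$ on $\p\Sigma$, and the Gaussian factor produces $x\cdot\nabla\psi=\alpha$. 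Using Cauchy--Schwarz, this gives a quadratic inequality in $X=\bigl(\int_\Sigma|\nabla\psi|^2g^2\,d\mu_w\bigr)^{1/2}$ of the form
\[
(1-\theta)X^2\leq2X\,\|\nabla g\|+\alpha\|g\|^2 ,
\]
with norms in $L^2(\mu_w)$. Solving it and inserting the Poincar\'e bound for $\|g\|$ yields
\[
X\leq\frac{1+\sqrt{1+(1-\theta)\alpha/\lambda_w}}{1-\theta}\,\|\nabla g\| ,
\]
which matches the shape of \eqref{eq:BLstab kappa}. I expect the real difficulty to be making this integration by parts rigorous near $\p\Sigma$ and at the origin: cut-offs, the integrability $\nabla\psi\in L^2(\mu_w)$ of Lemma~\ref{lemma:IBPw identity}, and the regularity needed for Bochner applied to $u$ via approximation.

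Finally, I would assemble the pieces with Young's inequality so that the weights give exactly $\kappa_w^{-1}=\lambda_w^{-1}+\theta(\cdots)^2$. Specializing to $w=x^A$ with $\theta$ chosen per coordinate via Lemma~\ref{lem:monomial hypotheses} (for which $\alpha_i>1$ is needed), and with $\lambda_w=2$ from Theorem~\ref{thm:main}, should then recover Theorem~\ref{thm:BL stability}.
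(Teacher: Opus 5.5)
Your completed-square identity is in fact an equality, and it is exactly the paper's dual form of the deficit. Differentiating $-L_wu=f$ gives $\nabla f=(-L_w)\nabla u+B\nabla u$. Hence $B^{-1/2}\nabla f-B^{1/2}\nabla u=B^{-1/2}(-L_w)v$, where $v=\nabla u-a$ and $a=\int_\Sigma\nabla u\,d\mu_w$. Therefore
\[
\delta_{BL}(f)=\mathcal A+\mathcal C,\qquad \mathcal A=\int_\Sigma\|\nabla v\|^2d\mu_w,\quad \mathcal C=\int_\Sigma\langle B^{-1}(-L_w)v,(-L_w)v\rangle\, d\mu_w .
\]
The gap is that you discard $\mathcal C$. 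On the other side, Bochner applied to $h=u-a\cdot x$ gives the exact identity $\int_\Sigma|f-\langle\nabla W,a\rangle|^2d\mu_w=\mathcal A+\mathcal D$, with $\mathcal D=\int_\Sigma\langle Bv,v\rangle\, d\mu_w$; no duality argument is needed. Your Poincar\'e and Hardy estimates, together with $\langle-\nabla^2\psi\,v,v\rangle\leq(-\Delta\psi)|v|^2\leq\theta|\nabla\psi|^2|v|^2$, give exactly $\mathcal D\leq\kappa_w^{-1}\mathcal A$. That bounds $\mathcal D$, not the full distance $\mathcal A+\mathcal D$. From $\delta_{BL}(f)\geq\mathcal A$ alone you only get the weaker constant $\kappa_w/(1+\kappa_w)$.

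This loss is not an artifact of how you choose $c$. The inequality you set out to prove, $\inf_c\int_\Sigma|f-\langle\nabla W,c\rangle|^2d\mu_w\leq\kappa_w^{-1}\int_\Sigma\|\nabla^2u\|^2d\mu_w$, is false. Take $N=1$, $w=x^{10}$ on $(0,\infty)$, $\theta=\frac1{10}$, $\lambda_w=2$ and $f=x^3-12x$, so that $u'=x^2$. Then $\int|u''|^2d\mu_w=44$ and $\inf_c\int|f-cW'|^2d\mu_w=88+\frac{2}{19}$. Since $\kappa_w^{-1}\approx1.88<2$, the inequality fails.

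The missing idea is to keep $\mathcal C$ and bound it from below by Cauchy--Schwarz in the $B$-metric:
\[
\mathcal A=\int_\Sigma\langle -L_wv,v\rangle\, d\mu_w=\int_\Sigma\langle B^{-1/2}(-L_wv),B^{1/2}v\rangle\, d\mu_w\leq\mathcal C^{1/2}\mathcal D^{1/2}.
\]
Hence $\mathcal C\geq\mathcal A^2/\mathcal D\geq\kappa_w\mathcal A$, and also $\mathcal A\geq\kappa_w\mathcal D$. Therefore
\[
\delta_{BL}(f)=\mathcal A+\mathcal C\geq\kappa_w(\mathcal A+\mathcal D)=\kappa_w\int_\Sigma|f-\langle\nabla W,a\rangle|^2d\mu_w ,
\]
which is the stated constant.

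Your closing remark is also inaccurate. The homogeneous theorem uses a single $\theta$ and the total degree $\alpha$. For $w=x^A$ it therefore reproduces $\kappa_A$ only when $N=1$; Theorem~\ref{thm:BL stability} requires the coordinate-by-coordinate argument.
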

For $w=x^A$ and $N=1$, Theorem~\ref{thm:BL stability homogeneous} with $\lambda_w=2$ and $\theta=\frac1{\alpha_1}$ gives exactly $\kappa_w=\kappa_A$. For $N\geq2$ it gives a smaller constant than Theorem~\ref{thm:BL stability}, since it uses only $\theta=\max_i\frac1{\alpha_i}$ and the total degree $\alpha=\alpha_1+\dots+\alpha_N$, while the proof of Theorem~\ref{thm:BL stability} works one coordinate at a time.

For the remainder of the paper, we will assume that the monomial weight is full unless stated otherwise.

The paper is organized as follows. In \Cref{sec:poincare beckner}, we establish the condition $ICD(2,\infty)$ and use it to prove a sharp Poincar\'e inequality and a Beckner inequality with an improved constant. In \Cref{sec:spectral}, we transform the monomial operator into a Laguerre operator and describe its spectral properties. We use it to establish the stability of the Poincar\'e inequality and to give an exact remainder identity. In \Cref{sec:stability of BL}, we establish the $L^2$ stability of the Brascamp--Lieb inequality, first for the monomial Gaussian measure and then for homogeneous Gaussian measures, using the duality approach. In \Cref{sec:ICD for hom}, we discuss the $ICD$ condition for homogeneous Gaussian measures.

\section{Poincar\'e and Beckner inequalities\texorpdfstring{---Proof of Theorem~\ref{thm:main}}{---Proof of Theorem 1.1}}\label{sec:poincare beckner}
In this section we show that the full monomial Gaussian measure satisfies the integrated curvature-dimension condition $ICD(2,\infty)$. We will then use it to prove the sharp Poincar\'e inequality and a Beckner inequality with an improved constant. We begin with the following lemma.
\begin{lemma}\label{lemma1}%
Let $\alpha > 0$ and let $d\mu_\alpha=Z_\alpha^{-1}x^{\alpha}e^{-\frac{x^2}{2}}dx$ be the one-dimensional monomial probability measure on $(0,\infty)$, where $Z_\alpha=\int_0^\infty x^{\alpha}e^{-\frac{x^2}{2}}dx=2^{\frac{\alpha-1}{2}}\Gamma\left(\frac{\alpha+1}{2}\right)$.  Let $h\in C^1(0,\infty)$ satisfy the following decay properties:
\begin{equation}
    x^{\alpha-1}e^{-\frac{x^2}{2}}\,|h|^2 \longrightarrow 0
    \qquad \text{as } x\to 0 \text{ and as } x\to\infty.
    \label{eq:decay}
\end{equation}
Then we have that
\begin{equation}
    \int_0^\infty |h'|^2d\mu_\alpha
    + \alpha\int_0^\infty \frac{|h|^2}{x^2}d\mu_\alpha
    \ \geq\ \int_0^\infty |h|^2d\mu_\alpha
    \label{eq:ineq}
\end{equation}
with equality if and only if $h(x) = cx$ for some constant $c$.
\end{lemma}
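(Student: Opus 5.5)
The natural route is a ground-state substitution with the equality candidate $x$. Write $h = x g$ on $(0,\infty)$ and expand the weighted Dirichlet energy. Set $\rho(x)=x^{\alpha}e^{-x^2/2}$, so that $d\mu_\alpha=Z_\alpha^{-1}\rho\,dx$. We have $h' = g + x g'$, and hence
\[
|h'|^2 = g^2 + 2xgg' + x^2|g'|^2 = g^2 + x(g^2)' + x^2|g'|^2 .
\]
The cross term will be integrated by parts against $\rho$. Since
\[
(x\rho)' = \bigl((\alpha+1) - x^2\bigr)\rho ,
\]
we get
\[
\int_0^\infty x(g^2)'\rho\,dx = \bigl[x g^2\rho\bigr]_0^\infty - \int_0^\infty g^2\bigl(\alpha+1-x^2\bigr)\rho\,dx .
\]
The boundary term equals $x^{\alpha-1}e^{-x^2/2}|h|^2$, which is exactly the quantity assumed to vanish in \eqref{eq:decay}. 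So both ends are controlled by hypothesis, and this is the reason for the form of \eqref{eq:decay}.

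Next we collect terms, using $g^2 = h^2/x^2$ and $x^2 g^2 = h^2$:
\[
\int |h'|^2 d\mu_\alpha = \int g^2\,d\mu_\alpha - (\alpha+1)\int g^2\,d\mu_\alpha + \int h^2\,d\mu_\alpha + \int x^2|g'|^2\,d\mu_\alpha .
\]
This gives the identity
\[
\int_0^\infty |h'|^2 d\mu_\alpha + \alpha\int_0^\infty \frac{h^2}{x^2}d\mu_\alpha - \int_0^\infty h^2 d\mu_\alpha = \int_0^\infty x^2\Bigl|\Bigl(\frac{h}{x}\Bigr)'\Bigr|^2 d\mu_\alpha \ \ge 0 ,
\]
which is \eqref{eq:ineq}. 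Equality holds iff $(h/x)'\equiv 0$ on $(0,\infty)$, i.e.\ $h=cx$. Conversely, $h=cx$ satisfies \eqref{eq:decay} because $x^{\alpha+1}e^{-x^2/2}\to0$ at both ends when $\alpha>0$.

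The only delicate point is justifying the integration by parts when some integrals may be infinite. I would first carry out the computation on $[\varepsilon,R]$, where everything is finite since $h\in C^1$. That gives the identity with boundary terms $x^{\alpha-1}e^{-x^2/2}h^2$ evaluated at $\varepsilon$ and $R$. Letting $\varepsilon\to0$ and $R\to\infty$, these boundary terms vanish by \eqref{eq:decay}. The remaining terms are handled as follows. If the left side of \eqref{eq:ineq} is infinite, there is nothing to prove. Otherwise $\int h^2/x^2\,d\mu_\alpha$ and $\int|h'|^2d\mu_\alpha$ are finite. On $[\varepsilon,R]$ the truncated identity reads $\int|h'|^2+\alpha\int h^2/x^2 + (\text{bdry}) = \int h^2 + \int x^2|g'|^2$. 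The nonnegative terms $\int h^2$ and $\int x^2|g'|^2$ increase monotonically, so both are finite in the limit and the identity passes to the limit. The sign bookkeeping of the boundary terms is routine.
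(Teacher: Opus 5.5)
Your proof is correct and is essentially the paper's argument. Since $x^2\bigl|(h/x)'\bigr|^2=\bigl|h'-h/x\bigr|^2$, your substitution $h=xg$ yields the same identity the paper gets by expanding $\int|h'-h/x|^2\,d\mu_\alpha\ge0$ and integrating the cross term by parts, with the same boundary term $x^{\alpha-1}e^{-x^2/2}|h|^2$ and the same equality case; your truncation on $[\varepsilon,R]$, using monotonicity of the two nonnegative terms, validly replaces the paper's Cauchy--Schwarz remark on the finiteness of the cross term.
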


\begin{proof}
If $\int_0^\infty |h'|^2d\mu_\alpha$ or $\int_0^\infty \frac{|h|^2}{x^2}d\mu_\alpha$ is infinite, there is nothing to prove. So we assume both are finite. Then the cross term below is finite by the Cauchy--Schwarz inequality, and the identity obtained by integrating by parts on $[\varepsilon,M]$ and letting $\varepsilon\to0$, $M\to\infty$ shows that $\int_0^\infty |h|^2d\mu_\alpha$ is finite too. Hence all integrals below are finite.

Since $|h' - h/x|^2 \geq 0$, integrating we get
\begin{equation}
    0 \ \leq\ \int_0^\infty \left|h' - \frac{h}{x}\right|^2 d\mu_\alpha
    = \int_0^\infty |h'|^2d\mu_\alpha
    - 2\int_0^\infty \frac{h\,h'}{x}d\mu_\alpha
    + \int_0^\infty \frac{|h|^2}{x^2}d\mu_\alpha.
    \label{eq:square}
\end{equation}
We integrate the cross term by parts. Using $2h\,h' = (|h|^2)'$,
\begin{align*}
    2\int_0^\infty \frac{h\,h'}{x}d\mu_\alpha
    &= Z_\alpha^{-1}\int_0^\infty \frac{(|h|^2)'}{x} x^{\alpha} e^{-\frac{x^2}{2}}dx\\
    &= Z_\alpha^{-1}\left[|h|^2x^{\alpha-1} e^{-\frac{x^2}{2}}\right]_0^\infty
    - Z_\alpha^{-1}\int_0^\infty |h|^2\left(x^{\alpha-1} e^{-\frac{x^2}{2}}\right)' dx.
\end{align*}
The boundary term vanishes
by the assumption \eqref{eq:decay}. For the remaining integral we get
\begin{align*}
    - Z_\alpha^{-1}\int_0^\infty |h|^2\left(x^{\alpha-1} e^{-\frac{x^2}{2}}\right)' dx &= -Z_\alpha^{-1}\int_0^\infty \left((\alpha - 1)x^{\alpha-2}e^{-\frac{x^2}{2}} + x^{\alpha-1}e^{-\frac{x^2}{2}} (-x)\right) |h|^2dx \\
    &= -\int_0^\infty \frac{\alpha-1}{x^2}|h|^2 d\mu_\alpha + \int_0^\infty |h|^2 d\mu_\alpha.
\end{align*}
Substituting this identity into \eqref{eq:square} gives
\begin{align*}
    0 &\ \leq\ \int_0^\infty |h'|^2d\mu_\alpha
    - \int_0^\infty |h|^2d\mu_\alpha
    + (\alpha - 1)\int_0^\infty \frac{|h|^2}{x^2}d\mu_\alpha
    + \int_0^\infty \frac{|h|^2}{x^2}d\mu_\alpha \\
    &\ =\ \int_0^\infty |h'|^2d\mu_\alpha
    + \alpha\int_0^\infty \frac{|h|^2}{x^2}d\mu_\alpha
    - \int_0^\infty |h|^2d\mu_\alpha,
\end{align*}
which is \eqref{eq:ineq}. Equality holds if and only if the integrand in
\eqref{eq:square} vanishes identically, i.e.\ $h' = h/x$.
This means $(h/x)'=0$, hence $h(x) = cx$.
\end{proof}

Next we establish $ICD(2,\infty)$.

\begin{theorem}\label{thm:ICD}
Assume $\alpha_i>0$ for all $i$. Then for every $f\in C^2(\overline{\mr_*^N})$ whose derivatives of order at most $2$ grow at most polynomially at infinity,
\begin{equation*}
    \int_{\mr_*^N}\Gamma_2(f)\,d\mu_A\geq 2\int_{\mr_*^N}\Gamma(f)\,d\mu_A ,
\end{equation*}
where the left-hand side may be $+\infty$. The constant $2$ is sharp: equality holds for $f=x_i^2$.
\end{theorem}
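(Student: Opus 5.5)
We expand $\Gamma_2$ by the Bochner formula. Since $\nabla^2W=\mathbb{I}+\operatorname{diag}(\alpha_i/x_i^2)$,
\begin{equation*}
\Gamma_2(f)=\sum_{i,j}|\p_{ij}f|^2+\sum_i|\p_if|^2+\sum_i\frac{\alpha_i}{x_i^2}|\p_if|^2 .
\end{equation*}
We discard the off-diagonal second derivatives, which are nonnegative, and keep $\sum_i|\p_{ii}f|^2$. This gives the pointwise bound
\begin{equation*}
\Gamma_2(f)\geq\Gamma(f)+\sum_i\left(|\p_i(\p_if)|^2+\frac{\alpha_i}{x_i^2}|\p_if|^2\right).
\end{equation*}
It therefore suffices to show, for each $i$,
\begin{equation*}
\int\left(|\p_i h|^2+\frac{\alpha_i}{x_i^2}|h|^2\right)d\mu_A\geq\int |h|^2\,d\mu_A,\qquad h=\p_if .
\end{equation*}
Summing over $i$ then yields $\int\Gamma_2(f)\,d\mu_A\geq2\int\Gamma(f)\,d\mu_A$.

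To prove the one-coordinate inequality we use that $\mu_A$ is a product measure, $\mu_A=\bigotimes_k\mu_{\alpha_k}$. Fix the other variables $x'=(x_k)_{k\neq i}$ and apply Lemma~\ref{lemma1} with $\alpha=\alpha_i$ to the function $x_i\mapsto h(x_i,x')$. Then integrate in $x'$ against $\bigotimes_{k\neq i}\mu_{\alpha_k}$ (Tonelli, all terms nonnegative). If the left-hand side is $+\infty$ there is nothing to prove, which is consistent with the statement allowing $+\infty$.

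The main point to check is the decay hypothesis \eqref{eq:decay} of Lemma~\ref{lemma1} for $h=\p_if(\cdot,x')$, namely that $x^{\alpha_i-1}e^{-x^2/2}|h|^2\to0$ at $0$ and $\infty$.
\begin{itemize}
\item At infinity this follows from the polynomial growth assumption on the derivatives of $f$.
\item At $0$ it is immediate when $\alpha_i\geq1$, since $h$ is bounded near $x_i=0$ because $f\in C^2(\overline{\mr_*^N})$.
\end{itemize}
When $0<\alpha_i<1$ the boundary term $x^{\alpha_i-1}|h(0,x')|^2$ need not vanish, and this is the main obstacle. Here we argue that if $h(0,x')\neq0$ on a set of positive measure in $x'$, then $\int\frac{\alpha_i}{x_i^2}|h|^2d\mu_A=+\infty$, since $x^{\alpha_i-2}$ is not integrable at $0$. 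In that case the inequality is trivial. Otherwise $h(0,x')=0$ for a.e. $x'$, and by continuity for all $x'$. Then $|h|\leq Cx_i$ near $0$ by the $C^2$ bound, the boundary term is $O(x^{\alpha_i+1})\to0$, and Lemma~\ref{lemma1} applies. The same reasoning at the level of each fibre avoids any measure-theoretic subtlety: for each fixed $x'$, either the fibre integral of $\alpha_i|h|^2/x_i^2$ diverges or $h(0,x')=0$.

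For sharpness, take $f=x_i^2$. Then $\p_if=2x_i$ is of the form $cx_i$, so Lemma~\ref{lemma1} holds with equality. Moreover $\nabla^2f$ is diagonal, so no off-diagonal terms were dropped. Hence both inequalities above are equalities, and equality holds in $ICD(2,\infty)$. As a direct check, $\Gamma(f)=4x_i^2$ and $\Gamma_2(f)=4+4x_i^2+4\alpha_i$. Since $\int x_i^2\,d\mu_A=\alpha_i+1$, this gives $\int\Gamma_2(f)\,d\mu_A=8(\alpha_i+1)=2\int\Gamma(f)\,d\mu_A$.
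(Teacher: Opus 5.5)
Your proposal follows the paper's proof essentially step for step: the Bochner formula, dropping the off-diagonal Hessian entries, applying Lemma~\ref{lemma1} fibrewise to $h=\p_if$ via the product structure and Tonelli, and the same direct computation for $f=x_i^2$.

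There is one slip in the boundary analysis. You say the decay \eqref{eq:decay} at $0$ is immediate for $\alpha_i\geq1$ because $h$ is bounded. This fails at $\alpha_i=1$: there $x^{\alpha_i-1}e^{-x^2/2}|h|^2\to|h(0,x')|^2$, which need not vanish.

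That case belongs with $0<\alpha_i<1$. Your non-integrability argument then applies verbatim, since $\int_0^1x^{-1}\,dx=\infty$ makes the fibre integral of $\alpha_i|h|^2/x_i^2$ infinite whenever $h(0,x')\neq0$. The paper splits the cases exactly this way: $h(0)=0$; $h(0)\neq0$ with $\alpha_i>1$; and $h(0)\neq0$ with $\alpha_i\leq1$.
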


\begin{proof}
First consider it in dimension $N=1$. Recall that our operator is
$$L_\alpha = \p_{xx} - \left(x-\frac{\alpha}{x}\right) \p_x,$$ and
$$\Gamma(f) = |f'|^2, \qquad \Gamma_2(f) = |f''|^2 + \left(1+\frac{\alpha}{x^2} \right)|f'|^2 . $$
Our goal is to show:
$$\int_0^\infty |f''|^2 + \left(1+\frac{\alpha}{x^2} \right)|f'|^2  d\mu_\alpha \geq 2 \int_0^\infty |f'|^2 d\mu_\alpha,$$
which is equivalent to
$$\int_0^\infty |f''|^2 d\mu_\alpha + \int_0^\infty |f'|^2 d\mu_\alpha + \alpha \int_0^\infty \frac{|f'|^2}{x^2} d\mu_\alpha \geq 2 \int_0^\infty |f'|^2 d\mu_\alpha. $$
Simplifying we get
$$\int_0^\infty |f''|^2 d\mu_\alpha + \alpha \int_0^\infty \frac{|f'|^2}{x^2} d\mu_\alpha \geq  \int_0^\infty |f'|^2 d\mu_\alpha.$$
But this is exactly Lemma~\ref{lemma1} applied to $h = f'$, and this proves the claim in dimension $N=1$. \\

Consider now general dimension $N$, and assume $\alpha_i > 0$ for all $i$. Our operator is
$$L_A = \sum_{i=1}^N \p_{x_ix_i} - \left(x_i - \frac{\alpha_i}{x_i}\right)\p_{x_i},$$
with
$$\Gamma(f) = |\nabla f|^2, \qquad \Gamma_2(f) = \|\nabla^2 f\|^2 + \sum_{i=1}^N \left(1+\frac{\alpha_i}{x_i^2}\right)|\p_{x_i}f|^2.$$
We first establish, for each fixed  $i$, the inequality
\begin{equation}
    \int_{\mr_*^N} |f_{x_ix_i}|^2 + \left(\frac{\alpha_i}{x_i^2}\right)|f_{x_i}|^2  d\mu_A \geq \int_{\mr_*^N} |f_{x_i}|^2  d\mu_A. \label{eq:lemmamd}%
\end{equation}
Since $\mu_A $ is a product measure, split the coordinates into the variable $x_i$ and factor $\mu_A = \mu_{\alpha_i}\otimes \mu_{\neq i}$. By Tonelli's theorem, since the integrand is nonnegative,
$$\int_{\mr_*^N} |f_{x_ix_i}|^2 + \left(\frac{\alpha_i}{x_i^2}\right)|f_{x_i}|^2  d\mu_A = \int_{\mr_*^{N-1}}\left( \int_0^\infty |f_{x_ix_i}|^2 + \left(\frac{\alpha_i}{x_i^2}\right)|f_{x_i}|^2  d\mu_{\alpha_i}\right) d\mu_{\neq i}.$$
Now for each fixed $x_{\neq i}$, the slice $x_i \mapsto \p_{x_i}f(x_i, x_{\neq i})$ is a function of the single variable $x_i$. Setting $h(x_i) = \p_{x_i}f(x_i, x_{\neq i})$, so that $h'(x_i) = \p_{x_ix_i}f(x_i, x_{\neq i})$, we obtain by Lemma~\ref{lemma1}
$$\int_0^\infty |f_{x_ix_i}|^2 + \left(\frac{\alpha_i}{x_i^2}\right)|f_{x_i}|^2  d\mu_{\alpha_i} \geq \int_0^\infty |f_{x_i}|^2  d\mu_{\alpha_i}.$$
Lemma~\ref{lemma1} can be used here. The decay \eqref{eq:decay} at infinity follows from the polynomial growth of $h$. Near $x_i=0$ there are three cases. If $h(0)=0$, then $h(x_i)=O(x_i)$ and \eqref{eq:decay} holds. If $h(0)\neq0$ and $\alpha_i>1$, then \eqref{eq:decay} holds since $h$ is bounded. If $h(0)\neq0$ and $\alpha_i\leq1$, then $\int_0^1x_i^{\alpha_i-2}dx_i=\infty$, so $\int_0^\infty \frac{|h|^2}{x_i^2}d\mu_{\alpha_i}=\infty$ and the inequality is trivial.
Since this holds for every $x_{\neq i}$, integrating over $\mu_{\neq i}$ and applying Tonelli once more gives
$$\int_{\mr_*^N} |f_{x_ix_i}|^2 + \left(\frac{\alpha_i}{x_i^2}\right)|f_{x_i}|^2  d\mu_A \geq \int_{\mr_*^N} |f_{x_i}|^2  d\mu_A,$$
which is \eqref{eq:lemmamd}. With the inequality, we compute
\begin{align*}
    \int_{\mr_*^N} \Gamma_2(f) d\mu_A - \int_{\mr_*^N} |\nabla f|^2 d\mu_A &= \int_{\mr_*^N} \left( \|\nabla^2 f\|^2 + \sum_{i=1}^N \left(\frac{\alpha_i}{x_i^2}\right)|\p_{x_i}f|^2 \right) d\mu_A \\
    &\geq \int_{\mr_*^N} \left( \sum_{i=1}^N |f_{x_ix_i}|^2 + \sum_{i=1}^N \left(\frac{\alpha_i}{x_i^2}\right)|\p_{x_i}f|^2 \right) d\mu_A \\
    &= \sum_{i=1}^N \int_{\mr_*^N} |f_{x_ix_i}|^2 + \left(\frac{\alpha_i}{x_i^2}\right)|\p_{x_i}f|^2  d\mu_A \\
   \text{by \eqref{eq:lemmamd}} &\geq \sum_{i=1}^N \int_{\mr_*^N} |f_{x_i}|^2  d\mu_A \\
   &= \int_{\mr_*^N} |\nabla f|^2 d\mu_A.
\end{align*}
Rearranging we get the $ICD(2,\infty)$ condition in dimension $N$:
\begin{equation}
    \int_{\mr_*^N} \Gamma_2(f) d\mu_A \geq 2 \int_{\mr_*^N} |\nabla f|^2 d\mu_A .\label{ICD(2,infty)}%
\end{equation}
For $f=x_i^2$ we have $\Gamma_2(f)=4+4\left(1+\frac{\alpha_i}{x_i^2}\right)x_i^2$, so $\int\Gamma_2(f)\,d\mu_A=4+4(\alpha_i+1)+4\alpha_i=8(\alpha_i+1)=2\int|\nabla f|^2d\mu_A$, since $\int x_i^2d\mu_A=\alpha_i+1$. Hence the constant $2$ is sharp.
\end{proof}
Using Theorem~\ref{thm:ICD} and the standard semigroup argument, we can now conclude that the measure $\mu_A$ satisfies a Poincar\'e inequality.
\begin{theorem}\label{poincareineq}
Assume $\alpha_i>0$ for all $i$. Then for every $f\in W^{1,2}_A(\mr_*^N,\mu_A)$
\begin{equation*}
    \operatorname{Var}_{\mu_A}(f) \leq \frac{1}{2} \int_{\mr_*^N} |\nabla f|^2 d\mu_A.
\end{equation*}
The constant $\frac12$ is sharp. Equality holds for $\varphi_i =\frac{\alpha_i + 1}{2} - \frac{x_i^2}{2}$, $i=1,\dots,N$, and more precisely, equality holds if and only if $f-\int_{\mr_*^N}f\,d\mu_A$ belongs to the eigenspace of $-L_A$ with eigenvalue $2$ (see Section~\ref{sec:spectral}).
\end{theorem}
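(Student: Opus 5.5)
Poincar\'e with constant $\frac12$ follows from $ICD(2,\infty)$ (Theorem~\ref{thm:ICD}) by the standard semigroup argument of \cite{BGL14}. Fix $f$ in a core of nice functions: polynomials, or $C_c^\infty(\overline{\mr_*^N})$. We may assume $\int f\,d\mu_A=0$. Set $f_t=P_tf$ and $\Lambda(t)=\int\Gamma(f_t)\,d\mu_A=-\int f_tL_Af_t\,d\mu_A$. Then
$\frac{d}{dt}\int f_t^2\,d\mu_A=-2\Lambda(t)$ and $\Lambda'(t)=-2\int(L_Af_t)^2\,d\mu_A=-2\int\Gamma_2(f_t)\,d\mu_A$.
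The last identity is the integrated Bochner formula $\int\Gamma_2(g)\,d\mu_A=\int(L_Ag)^2\,d\mu_A$.

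Applying Theorem~\ref{thm:ICD} to $g=f_t$ gives $\Lambda'\le-4\Lambda$, hence $\Lambda(t)\le e^{-4t}\Lambda(0)$. Since $P_tf\to\int f\,d\mu_A=0$ in $L^2$ (ergodicity, using that $\mu_A$ already satisfies $P(1)$ by Theorem~\ref{thm:A}), we obtain
$\operatorname{Var}(f)=2\int_0^\infty\Lambda\le 2\Lambda(0)/4=\frac12\int|\nabla f|^2d\mu_A$.
Density in $W^{1,2}_A$ then extends the inequality to all $f$.

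\textbf{Main obstacle: regularity.} Theorem~\ref{thm:ICD} requires $P_tf\in C^2(\overline{\mr_*^N})$ with polynomially growing derivatives, and the Bochner integration by parts needs vanishing boundary terms on $\p\mr_*^N$ (Neumann condition). I would first try the product structure: $P_t=\bigotimes_iP^{(i)}_t$, and each one-dimensional $L_{\alpha_i}$ preserves polynomials of degree $\le k$ in $x_i^2$. Then for $f$ a polynomial in $x_1^2,\dots,x_N^2$, $P_tf$ stays in this finite-dimensional space and is explicit, so all hypotheses hold trivially. The remaining density step must show that even polynomials (polynomials in $x_i^2$) are dense in $W^{1,2}_A$. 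Odd monomials $x_i$ are not in this span but are approximable since $\mu_{\alpha_i}$ lives on $(0,\infty)$. Rather than prove density directly, it may be easier to defer to Section~\ref{sec:spectral}: the change of variables $s_i=x_i^2/2$ turns $L_A$ into a sum of Laguerre operators, with complete orthogonal eigenbasis $\prod_iL^{(\beta_i)}_{k_i}(x_i^2/2)$ and eigenvalues $2\sum k_i$. With that basis in hand, both the inequality and the equality case are immediate. Writing $f=\sum_{j\ge0}\Pi_jf$ gives
$\int|\nabla f|^2d\mu_A=\sum_j 2j\|\Pi_jf\|^2\ge 2\sum_{j\ge1}\|\Pi_jf\|^2=2\operatorname{Var}(f)$,
with equality iff $\Pi_jf=0$ for $j\ge2$.

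\textbf{Sharpness and equality.} Sharpness follows by direct computation: $L_A x_i^2=2-2x_i^2+2\alpha_i$, so $-L_A\varphi_i=2\varphi_i$. Integrating by parts then gives $\int|\nabla\varphi_i|^2=2\int\varphi_i^2$, with $\int\varphi_i\,d\mu_A=0$ because $\int x_i^2\,d\mu_A=\alpha_i+1$. For equality without the full spectral theorem, trace the semigroup argument: equality forces $\Lambda(t)=e^{-4t}\Lambda(0)$, hence equality in Lemma~\ref{lemma1} for each slice. This gives $f_{x_i}=c_ix_i$, all mixed second derivatives zero, and thus $f=\sum c_ix_i^2/2+\text{const}$, i.e.\ $f-\int f$ lies in the span of the $\varphi_i$, the eigenspace for eigenvalue $2$.
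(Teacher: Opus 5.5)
Your proposal is correct and follows the paper's proof. The paper likewise cites the standard semigroup argument from $ICD(2,\infty)$, and then gives the direct spectral proof $\operatorname{Var}_{\mu_A}(f)=\sum_{j\ge1}\|\Pi_jf\|^2\le\frac12\sum_{j\ge1}2j\|\Pi_jf\|^2$, which avoids semigroup regularity and gives the equality cases; the only difference is that you verify sharpness through $-L_A\varphi_i=2\varphi_i$ and an integration by parts, whereas the paper computes the Gamma-function moments $\int_0^\infty x^{\alpha+2k}e^{-x^2/2}\,dx$ explicitly.
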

\begin{proof}
The inequality follows by a standard argument using the equivalence of
$ICD(\rho,\infty)$ and the Poincar\'e inequality $P(\rho)$, see
\cite{BGL14}. A direct proof, which needs no regularity of the semigroup, uses the spectral decomposition of Section~\ref{sec:spectral}: with $c_j=\|\Pi_jf\|_{L^2(\mu_A)}$ we have $\operatorname{Var}_{\mu_A}(f)=\sum_{j\geq1}c_j^2\leq\frac12\sum_{j\geq1}2j\,c_j^2=\frac12\E(f)$, with equality if and only if $c_j=0$ for all $j\geq2$. This also gives the characterization of the equality cases. For the sharpness, since $\mu_A$ is a product measure and $\varphi_i$ depends only on $x_i$, it is enough to consider $N=1$. Write $\alpha=\alpha_1$ and take $f = x^2 = -2\varphi_1  +(\alpha+1)$. Then
\begin{align*}
    \int_0^\infty x^\alpha e^{-\frac{x^2}{2}}dx &= \frac{1}{2} 2^{\frac{\alpha+1}{2}}\Gamma\left({\frac{\alpha+1}{2}}\right), \\
    \int_0^\infty x^{2+\alpha} e^{-\frac{x^2}{2}}dx &= \frac{1}{2} 2^{\frac{\alpha+3}{2}}\Gamma\left({\frac{\alpha+3}{2}}\right),\\
    \int_0^\infty x^{4+\alpha} e^{-\frac{x^2}{2}}dx &= \frac{1}{2} 2^{\frac{\alpha+5}{2}}\Gamma\left({\frac{\alpha+5}{2}}\right).
\end{align*}
So we can compute both sides of the expression:
\begin{align*}
    RHS &= \frac{1}{2}\int_0^\infty |f'|^2 d\mu_\alpha = \frac{1}{2} \frac{\int_0^\infty 4x^{2+\alpha} e^{-\frac{x^2}{2}}dx}{\frac{1}{2} 2^{\frac{\alpha+1}{2}}\Gamma\left({\frac{\alpha+1}{2}}\right)} = \frac{1}{2} \cdot \frac{4 \frac{1}{2} 2^{\frac{\alpha+3}{2}}\Gamma\left({\frac{\alpha+3}{2}}\right)}{\frac{1}{2} 2^{\frac{\alpha+1}{2}}\Gamma\left({\frac{\alpha+1}{2}}\right)} = 4 \frac{\Gamma\left({\frac{\alpha+3}{2}}\right)}{\Gamma\left({\frac{\alpha+1}{2}}\right)} \\
    &= 2(\alpha + 1) \\
    LHS &= \operatorname{Var}_{\mu_\alpha}(f) = \int_0^\infty f^2 d\mu_\alpha - \left(\int_0^\infty f d\mu_\alpha\right)^2 = \frac{\int_0^\infty x^{4+\alpha}e^{-\frac{x^2}{2}}dx}{\int_0^\infty x^\alpha e^{-\frac{x^2}{2}}dx} - \left(\frac{\int_0^\infty x^{2+\alpha} e^{-\frac{x^2}{2}}dx}{\int_0^\infty x^\alpha e^{-\frac{x^2}{2}}dx}\right)^2 \\
    &= \frac{\frac{1}{2} 2^{\frac{\alpha+5}{2}}\Gamma\left({\frac{\alpha+5}{2}}\right)}{\frac{1}{2} 2^{\frac{\alpha+1}{2}}\Gamma\left({\frac{\alpha+1}{2}}\right)} - \left( \frac{\frac{1}{2} 2^{\frac{\alpha+3}{2}}\Gamma\left({\frac{\alpha+3}{2}}\right) }{\frac{1}{2} 2^{\frac{\alpha+1}{2}}\Gamma\left({\frac{\alpha+1}{2}}\right)}\right)^2 = 4 \left(\frac{\alpha+3}{2}\right)\left(\frac{\alpha+1}{2}\right) - \left(2 \left(\frac{\alpha+1}{2}\right)\right)^2 \\
    &=2 (\alpha + 1).
\end{align*}
So the function $f = x^2 $ is indeed the optimizer and so is $\varphi_1$.
\end{proof}
\begin{lemma}[Improved energy decay]\label{lem:decay}
For every $f\in  W^{1,2}_A(\mr_*^N, \mu_A)$ and every $t\geq 0$
\begin{equation}
    \E(P_tf)\ \leq\ e^{-4t}\,\E(f),\label{eq:decay-improved}
\end{equation}
where $\E(P_tf) = \int_{\mr_*^N} \Gamma(P_tf) d\mu_A = - \int_{\mr_*^N} P_tf\, L_A(P_tf) d\mu_A$.
\end{lemma}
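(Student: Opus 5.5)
The plan is to run the standard $\Gamma_2$ energy-decay argument and feed in Theorem~\ref{thm:ICD}. The one subtlety is regularity, and I will handle it through the spectral picture.

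\emph{Formal argument.} Set $\Phi(t)=\E(P_tf)=-\int P_tf\,L_AP_tf\,d\mu_A$. Self-adjointness of $L_A$ gives
\[
\Phi'(t)=-2\int (L_AP_tf)^2\,d\mu_A .
\]
Integrating the definition of $\Gamma_2$ against the invariant measure gives $\int\Gamma_2(g)\,d\mu_A=\int (L_Ag)^2\,d\mu_A$. This holds because $\int L_A\Gamma(g)\,d\mu_A=0$ and $-\int\nabla g\cdot\nabla L_Ag\,d\mu_A=\int (L_Ag)^2\,d\mu_A$. Applying Theorem~\ref{thm:ICD} with $g=P_tf$ then yields
\[
\Phi'(t)=-2\int\Gamma_2(P_tf)\,d\mu_A\le -4\,\Phi(t).
\]
Gr\"onwall's lemma gives $\Phi(t)\le e^{-4t}\Phi(0)$, which is \eqref{eq:decay-improved}.

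\emph{Making it rigorous.} The hard step is justifying the two integrations by parts. First, $\int L_A\Gamma(g)\,d\mu_A=0$ requires Neumann boundary behaviour of $\Gamma(g)$ on $\p\mr_*^N$. Second, Theorem~\ref{thm:ICD} needs $g\in C^2(\overline{\mr_*^N})$ with polynomial growth, which $P_tf$ need not satisfy for a general $f\in W^{1,2}_A$. To avoid semigroup regularity entirely, I would argue in the style of the proof of Theorem~\ref{poincareineq}. The idea is to use the spectral decomposition of Section~\ref{sec:spectral}: $-L_A$ has eigenvalues $2j$, and its eigenfunctions are products of Laguerre polynomials in $x_i^2$. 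Then
\[
P_tf=\sum_j e^{-2jt}\Pi_jf,\qquad \E(P_tf)=\sum_{j\ge1}2j\,e^{-4jt}\|\Pi_jf\|^2\le e^{-4t}\sum_{j\ge1}2j\|\Pi_jf\|^2=e^{-4t}\E(f),
\]
since $e^{-4jt}\le e^{-4t}$ for $j\ge1$. This uses only $\E(f)=\sum_j 2j\|\Pi_jf\|^2$, which follows from the form domain of the Neumann realization.

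\emph{Alternative route via $ICD$.} If one prefers to keep the argument tied to $ICD(2,\infty)$ rather than to the spectrum, the approach is as follows. Take $f$ to be a finite linear combination of eigenfunctions. These are polynomials in the $x_i^2$, so they belong to $C^2(\overline{\mr_*^N})$ with polynomial growth, and $\p_{x_i}$ of such a function vanishes at $x_i=0$. That vanishing makes the boundary terms in $\int L_A\Gamma\,d\mu_A$ disappear, because the flux $x_i^{\alpha_i}\p_{x_i}\Gamma$ vanishes. The set $\{P_tf\}$ stays inside this class, so the Gr\"onwall argument above applies verbatim. For general $f\in W^{1,2}_A$ I would then approximate $f$ in the form norm by such finite sums, which is possible since the eigenfunctions are complete. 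Both sides of \eqref{eq:decay-improved} are continuous in the form norm, because $P_t$ is a contraction commuting with $L_A$, so the inequality passes to the limit.

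I expect completeness of the Laguerre eigenbasis in $W^{1,2}_A$, which is needed for the density step, to be the only genuinely delicate point. Section~\ref{sec:spectral} supplies it.
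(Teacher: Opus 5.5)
Your proposal is correct and follows the paper's proof. The paper also differentiates $\E(P_tf)$, uses $\int\Gamma_2(g)\,d\mu_A=\int|L_Ag|^2\,d\mu_A$ together with $ICD(2,\infty)$ and Gr\"onwall, and then gives the same spectral computation $\E(P_tf)=\sum_{j\geq1}2j\,e^{-4jt}\|\Pi_jf\|^2\leq e^{-4t}\E(f)$ to avoid any regularity issue. Your extra density argument through finite sums of Laguerre eigenfunctions is a valid way to make the $ICD$ route rigorous, but it is not needed, since the spectral identity already proves the lemma.
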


\begin{proof}
We have that
\begin{align*}
    \frac{d}{dt}\,\E(P_tf)
    = -2\int_{\mr_*^N} |L_A P_tf|^2 d\mu_A
    = -2\int_{\mr_*^N} \Gamma_2(P_tf)\,d\mu_A.
\end{align*}
By the condition $ICD(2,\infty)$ in \eqref{ICD(2,infty)} we have
$$\int_{\mr_*^N}\Gamma_2(P_tf)\,d\mu_A\geq 2\int_{\mr_*^N}\Gamma(P_tf)\,d\mu_A=2\E(P_tf),$$
and thus
$$
\frac{d}{dt}\,\E(P_tf)\ \leq\ -4\,\E(P_tf).
$$
Hence $\frac{d}{dt}\left(e^{4t}\E(P_tf)\right)\leq0$, which gives \eqref{eq:decay-improved}. Alternatively, with the spectral decomposition of Section~\ref{sec:spectral} and $c_j=\|\Pi_jf\|_{L^2(\mu_A)}$, we have $\E(P_tf)=\sum_{j\geq1}2j\,e^{-4jt}c_j^2\leq e^{-4t}\sum_{j\geq1}2j\,c_j^2=e^{-4t}\E(f)$, which avoids any regularity issue.
\end{proof}

We combine this decay with the celebrated theorem of Gross \cite{GRO75}, see also \cite{GRO75b}, the survey \cite{Gross06}, and \cite{Gross25} for intrinsic hypercontractivity of Schr\"odinger operators.

\begin{theorem}[Gross' hypercontractivity]\label{thm:gross}
The logarithmic Sobolev inequality $LS(C,0)$ is equivalent to the following property: for all $1<p<q<\infty$ and all $t>0$ with $\frac{q -1}{p-1} \leq e^{2t/C}$,
\begin{equation}
    \|P_{t}f\|_{q} \leq\ \|f\|_p. \label{ineq:hypercontractivity}
\end{equation}
\end{theorem}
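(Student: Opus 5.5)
The plan is the classical Gross argument: differentiate $L^{q(t)}$ norms along the semigroup with a time-dependent exponent. Fix $1<p<\infty$ and set $q(t)=1+(p-1)e^{2t/C}$, so that $q(0)=p$ and $q'(t)=\frac{2}{C}(q(t)-1)$. I first take $f$ in a convenient dense class: bounded, bounded below by a positive constant, and regular enough to lie in the domain of $L_A$. Since $P_t$ is Markov, $g_t=P_tf$ stays in the same class, so $g_t^{q}\log g_t$, $g_t^{q/2}$ and so on are all legitimate.

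\textbf{Direct implication.} Put $F(t)=\|g_t\|_{q(t)}$. Differentiating $\int g^q\,d\mu_A$ gives
\begin{equation*}
\frac{d}{dt}\int g^{q}\,d\mu_A=q'\int g^{q}\log g\,d\mu_A+q\int g^{q-1}L_Ag\,d\mu_A .
\end{equation*}
For the second term I use the diffusion chain rule and integration by parts:
\begin{equation*}
\int g^{q-1}L_Ag\,d\mu_A=-(q-1)\int g^{q-2}\Gamma(g)\,d\mu_A=-\frac{4(q-1)}{q^2}\int\Gamma(g^{q/2})\,d\mu_A .
\end{equation*}
Assembling, one gets
\begin{equation*}
\frac{d}{dt}\log F=\frac{q'}{q^2}\|g\|_q^{-q}\Big[\operatorname{Ent}_{\mu_A}(g^q)-\frac{4(q-1)}{q'}\int\Gamma(g^{q/2})\,d\mu_A\Big].
\end{equation*}
Applying $LS(C,0)$ to $g^{q/2}$ gives $\operatorname{Ent}(g^q)\le 2C\int\Gamma(g^{q/2})$. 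By the choice of $q(t)$ we have exactly $\frac{4(q-1)}{q'}=2C$, so $F$ is nonincreasing, i.e. $\|P_tf\|_{q(t)}\le\|f\|_p$.

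Next suppose $\frac{q-1}{p-1}\le e^{2t/C}$. Choose $s\le t$ with $q(s)=q$, and write $P_t=P_{t-s}P_s$. Since $P_{t-s}$ is a contraction on $L^q(\mu_A)$, we get \eqref{ineq:hypercontractivity}. For general $f$ I use $|P_tf|\le P_t|f|$, replace $|f|$ by $|f|+\varepsilon$, and pass to the limit by density and monotone/dominated convergence.

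\textbf{Converse.} Take $p=2$ and $q(t)=1+e^{2t/C}$. By hypothesis $\Phi(t)=\|P_tf\|_{q(t)}-\|f\|_2\le0$ for $t\ge0$, and $\Phi(0)=0$. Hence $\Phi'(0^+)\le0$. The derivative computation above at $t=0$, with $q(0)=2$ and $q'(0)=2/C$, gives
\begin{equation*}
\Phi'(0^+)=\frac{1}{2C}\|f\|_2^{-1}\Big[\operatorname{Ent}_{\mu_A}(f^2)-2C\int\Gamma(f)\,d\mu_A\Big]\le 0,
\end{equation*}
which is $LS(C,0)$ for positive regular $f$. I then extend to all $f\in W^{1,2}_A(\mr_*^N,\mu_A)$ via $|f|$ (since $\Gamma(|f|)\le\Gamma(f)$) and density.

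\textbf{Main obstacle.} The step I expect to be hardest is justifying the differentiation under the integral and the integration by parts $\int g^{q-1}L_Ag\,d\mu_A=-(q-1)\int g^{q-2}\Gamma(g)\,d\mu_A$ for the Neumann realization of $L_A$ on the cone. I would first handle this by restricting to an algebra of functions stable under $P_t$, namely finite linear combinations of the Laguerre eigenfunctions from Section~\ref{sec:spectral}, shifted by constants to be positive. If that is awkward, the fallback is to note that this equivalence is a general fact for symmetric Markov diffusion semigroups and cite \cite{GRO75, BGL14} for it.
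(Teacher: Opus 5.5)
The paper does not prove this statement; it quotes it from Gross \cite{GRO75} (see also \cite{BGL14}). Your argument is precisely that classical proof, and its computations are all correct: $\frac{4(q-1)}{q'}=2C$, the splitting $P_t=P_{t-s}P_s$, and the value of $\Phi'(0^+)$. The one thing to repair is your regularizing class. Finite combinations of the $\Psi_k$ are exactly the polynomials in $x_1^2,\dots,x_N^2$, and these are usually unbounded below, so shifting by a constant does not make them positive. Start instead from $f=P^2+\varepsilon$: each $P_tf$ is then a polynomial in the $x_i^2$, it is $\geq\varepsilon$ by the Markov property, and all its moments are finite, so boundedness is unnecessary. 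For the converse, run the $t=0$ computation directly on sign-changing polynomials via $|P_tf|^{q(t)}$, then use density of this span in the form domain $W^{1,2}_A(\mr_*^N,\mu_A)$ and Fatou's lemma for the entropy.
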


Since $L_A$ satisfies $CD(1,\infty)$, the measure $\mu_A$ satisfies $LS(1,0)$ (see Theorem~\ref{thm:B} and \cite{BE85}). By Gross' theorem this is equivalent to
hypercontractivity. Taking $q=2$ and $p\in(1,2)$, the condition reads
$e^{2t}\geq\frac1{p-1}$, i.e.\ $t\geq t_p=-\frac12\log(p-1)$.

\begin{theorem}[Improved Beckner inequality]\label{thm:beckner}
Assume $\alpha_i>0$ for all $i$. For every $p\in[1,2)$ and every $f\in  W^{1,2}_A(\mr_*^N, \mu_A)$
\begin{equation}
    \int_{\mr_*^N} |f|^2 d\mu_A
    -\left(\int_{\mr_*^N} |f|^p d\mu_A\right)^{2/p}
    \ \leq\ \frac{p(2-p)}{2}\,\E(f),\label{eq:beckner}
\end{equation}
equivalently
\begin{equation}
    \frac{\|f\|_2^2-\|f\|_p^2}{2-p}\ \leq\ \frac p2\,\E(f).\label{eq:beckner-norm}
\end{equation}
\end{theorem}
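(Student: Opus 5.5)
The plan is the classical semigroup proof of the Beckner inequality: interpolate between $f$ and $P_tf$, control the $L^2$ loss by the improved energy decay of Lemma~\ref{lem:decay}, and then close the gap between $\|P_tf\|_2$ and $\|f\|_p$ with Gross' hypercontractivity (Theorem~\ref{thm:gross}) at the critical time $t_p=-\frac12\log(p-1)$.

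\textbf{Step 1 (energy identity).} For $f\in W^{1,2}_A(\mr_*^N,\mu_A)$ and $t\ge 0$, I will use the identity
\begin{equation*}
\|f\|_2^2-\|P_tf\|_2^2=-\int_0^t\frac{d}{ds}\|P_sf\|_2^2\,ds=2\int_0^t\E(P_sf)\,ds .
\end{equation*}
To avoid any regularity question about the semigroup, I will check it through the spectral decomposition of Section~\ref{sec:spectral}. With $c_j=\|\Pi_jf\|_{L^2(\mu_A)}$ we have $\|P_sf\|_2^2=\sum_j e^{-4js}c_j^2$ and $\E(P_sf)=\sum_j 2j\,e^{-4js}c_j^2$, and the identity follows term by term by monotone convergence.

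\textbf{Step 2 (apply the decay).} Lemma~\ref{lem:decay} gives $\E(P_sf)\le e^{-4s}\E(f)$. Inserting this into Step 1 yields
\begin{equation*}
\|f\|_2^2-\|P_tf\|_2^2\le \frac{1-e^{-4t}}{2}\,\E(f).
\end{equation*}

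\textbf{Step 3 (hypercontractivity, $1<p<2$).} Since $\mu_A$ satisfies $LS(1,0)$, Theorem~\ref{thm:gross} with $q=2$ gives $\|P_tf\|_2\le\|f\|_p$ as soon as $e^{2t}\ge \frac1{p-1}$. I choose exactly $t=t_p$, so that $e^{-4t_p}=(p-1)^2$. Then
\begin{equation*}
1-e^{-4t_p}=1-(p-1)^2=p(2-p),
\end{equation*}
and Step 2 becomes $\|f\|_2^2-\|f\|_p^2\le\|f\|_2^2-\|P_{t_p}f\|_2^2\le\frac{p(2-p)}{2}\E(f)$. This is \eqref{eq:beckner}, and dividing by $2-p$ gives \eqref{eq:beckner-norm}.

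\textbf{Step 4 (endpoint $p=1$).} Here $t_p=\infty$. I will let $t\to\infty$ in Step 2. By the spectral decomposition, $P_tf\to\Pi_0f=\int f\,d\mu_A$ in $L^2(\mu_A)$, so
\begin{equation*}
\|f\|_2^2-\Bigl(\int f\,d\mu_A\Bigr)^2\le\frac12\E(f),
\end{equation*}
which is Theorem~\ref{poincareineq}. Since $\bigl(\int f\,d\mu_A\bigr)^2\le\|f\|_1^2$, the case $p=1$ of \eqref{eq:beckner} follows.

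\textbf{Main obstacle.} The only delicate point is justifying Gross' theorem for the Neumann realization $P_t$ on the cone. In particular, hypercontractivity must be derived from $LS(1,0)$ for the full class $W^{1,2}_A$. I would apply Theorem~\ref{thm:gross} to $f$ in the dense class $C_c^\infty(\overline{\mr_*^N})$ and then extend by density. This extension works because both sides of \eqref{eq:beckner} are continuous in the $W^{1,2}_A$ norm: $\|f\|_p\le\|f\|_2$, and $P_t$ is an $L^2$ contraction.
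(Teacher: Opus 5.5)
Your proposal is correct and follows the paper's proof. Both combine the integrated energy identity with the improved decay $\E(P_sf)\le e^{-4s}\E(f)$ and evaluate at the hypercontractive time $t_p=-\frac12\log(p-1)$, where $\|P_{t_p}f\|_2\le\|f\|_p$. The differences are minor: you justify the energy identity spectrally, and you get $p=1$ by sending $t\to\infty$ and using $\bigl(\int f\,d\mu_A\bigr)^2\le\|f\|_1^2$ rather than letting $p\downarrow1$; both endpoint arguments are valid.
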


\begin{proof}
Fix $p\in(1,2)$. Using
$\frac{d}{ds}\|P_sf\|_2^2=-2\,\E(P_sf)$ together with Lemma \ref{lem:decay},
\begin{align*}
    \|f\|_2^2-\|P_tf\|_2^2
    &= 2\int_0^t \E(P_sf)\,ds
    \ \leq\ 2\E(f)\int_0^t e^{-4s}\,ds
    = \frac{1-e^{-4t}}{2}\,\E(f).
\end{align*}
Evaluate at $t=t_p=-\frac12\log(p-1)$. Then $e^{-2t_p}=p-1$, so
$e^{-4t_p}=(p-1)^2$ and
$$
\frac{1-e^{-4t_p}}{2}
= \frac{1-(p-1)^2}{2}
= \frac{\bigl(1-(p-1)\bigr)\bigl(1+(p-1)\bigr)}{2}
= \frac{p(2-p)}{2}.
$$
Finally, by the hypercontractive bound \eqref{ineq:hypercontractivity},
$\|P_{t_p}f\|_2^2\leq\|f\|_p^2$. Thus we get
$$
\|f\|_2^2-\|f\|_p^2
\ \leq\ \|f\|_2^2-\|P_{t_p}f\|_2^2
\ \leq\ \frac{p(2-p)}{2}\,\E(f).
$$
This proves the theorem for $p\in(1,2)$. The case $p=1$ follows by letting $p\downarrow1$.
\end{proof}

At $p=1$, \eqref{eq:beckner} gives the sharp Poincar\'e inequality of
Theorem~\ref{poincareineq} with constant $\frac12$. Indeed, for $f\geq0$ the left-hand side of \eqref{eq:beckner} with $p=1$ is $\operatorname{Var}_{\mu_A}(f)$. Since $\operatorname{Var}_{\mu_A}$ and $\E$ do not change when we add a constant, this gives the Poincar\'e inequality for bounded $f$, and then for all $f$ by density.

As $p\uparrow2$, the limit is
$$
\operatorname{Ent}_{\mu_A}(|f|^2)\leq\ 2\E(f),
$$
the sharp logarithmic Sobolev inequality, with the same constant as before. In other words, we do not get an improved constant for LSI. The constant is indeed sharp: for $f=e^{bx_1/2}$ we have $\log\int e^{bx_1}d\mu_A=\frac{b^2}{2}+\alpha_1\log b+O(1)$ as $b\to\infty$, and a direct computation gives $\operatorname{Ent}_{\mu_A}(|f|^2)/(2\E(f))\to1$. So the constant $\frac p2$ in \eqref{eq:beckner-norm} is sharp at both endpoints. For comparison, Theorem~\ref{thm:B} gives \eqref{eq:beckner-norm} with the constant $1$ in place of $\frac p2$, and that constant is sharp only for partial weights. For full weights, Theorem~\ref{thm:beckner} improves it for every $p\in[1,2)$. The interpolating family \eqref{eq:beckner} is due to Beckner \cite{Bec89}. Lei, Li, Wu, and Wu \cite{LLWW26} showed that the family is monotone: if the inequality $\frac{\|f\|_2^2-\|f\|_p^2}{2-p}\leq C\int|\nabla f|^2d\mu$ holds for all $f$, then it holds with the same $C$ for every exponent in $[1,p]$. Under the pointwise condition $CD(\rho,\infty)$ one has \eqref{eq:beckner-norm} with $\frac{1}{\rho}$ in place of $\frac{p}{2}$ (see \cite{BGL14, GZ21}). Our proof uses the integrated condition \eqref{ICD(2,infty)} for the energy
decay and the pointwise condition $CD(1,\infty)$ only through
hypercontractivity.

\begin{proof}[Proof of Theorem~\ref{thm:main}]
The condition $ICD(2,\infty)$ is Theorem~\ref{thm:ICD}. The Poincar\'e inequality and the characterization of its equality cases are Theorem~\ref{poincareineq}. By Section~\ref{sec:spectral}, the eigenspace of $-L_A$ with eigenvalue $2$ is $E_1=\operatorname{span}\{\Psi_k:|k|=1\}$, and for $|k|=1$ the function $\Psi_k$ is a constant multiple of $x_i^2-(\alpha_i+1)$ for some $i$. The Beckner inequality is \eqref{eq:beckner-norm} in Theorem~\ref{thm:beckner}, and its sharpness at $p=1$ and as $p\to2$ is shown after the proof of Theorem~\ref{thm:beckner}. Finally, let $q_1=x_1^2-(\alpha_1+1)$ and $f=1+\varepsilon q_1$ with $0<\varepsilon<\frac{1}{\alpha_1+1}$, so that $f>0$. Since $\int_{\mr_*^N}q_1d\mu_A=0$ and $q_1$ has finite moments of all orders, a Taylor expansion gives
\begin{equation*}
    \|f\|_2^2-\|f\|_p^2=(2-p)\,\varepsilon^2\int_{\mr_*^N}q_1^2d\mu_A+o(\varepsilon^2),\qquad \int_{\mr_*^N}|\nabla f|^2d\mu_A=2\varepsilon^2\int_{\mr_*^N}q_1^2d\mu_A.
\end{equation*}
Letting $\varepsilon\to0$ shows that the sharp constant in the Beckner inequality is at least $\frac12$ for every $p\in[1,2)$.
\end{proof}

\section{Spectral decomposition\texorpdfstring{---Proof of Theorem~\ref{poincare deficit}}{---Proof of Theorem 1.2}}\label{sec:spectral}
\subsection{The Laguerre operator}
In this subsection we compute the spectrum of $L_A$. Consider the one-dimensional operator
\begin{equation}
    L_{\alpha} = \p_{xx} - \left(x-\frac{\alpha}{x}\right)\p_x, \label{Laguerre}%
\end{equation}
where $\alpha > 0$. We would like to solve the eigenvalue equation
\begin{equation}
    L_\alpha f = f'' - \left(x-\frac{\alpha}{x}\right) f' = \lambda f. \label{eigenvalue}%
\end{equation}
This is closely related to the Laguerre operator
$$\mathcal{L}_\beta y = uy'' + (\beta+1-u) y'. $$
To make use of this, consider the change of variables $u = \frac{x^2}{2}$ and write $f(x)=g(u)$. Then $\frac{du}{dx} = x$ and $x^2 = 2u$, so
\begin{gather*}
f'(x)=x\,g'(u),\qquad f''(x) = x^2 g''(u) + g'(u) = 2u g''(u) + g'(u),\\
\left(x-\frac{\alpha}{x}\right)f'(x) = \left(x^2-\alpha\right)g'(u) = (2u - \alpha)g'(u).
\end{gather*}
Hence the equation \eqref{eigenvalue} becomes
\begin{align*}
    2u g'' + g' + ( \alpha - 2u)g'
    = 2ug'' + (\alpha +1 - 2u)g'
    = \lambda g.
\end{align*}
We can rewrite the resulting ODE as
\begin{equation}
    ug'' + \left(\frac{\alpha+1}{2}-u\right)g' = \frac{\lambda}{2}g. \label{ode}%
\end{equation}
Compare this to the generalized Laguerre equation
\begin{equation}
    uy'' + (\beta+1-u)y' = -ky.\label{gleq}
\end{equation}
We see that our ODE is exactly equation \eqref{gleq} with parameters $\beta = \frac{\alpha-1}{2}$ and $-k = \frac{\lambda}{2}$. For $k=0,1,2,\dots$ its polynomial solutions are the generalized Laguerre polynomials $L_k^\beta$:
\begin{align*}
    L_0^\beta (u) &= 1 \\
    L_1^\beta (u) &= 1+\beta - u\\
    L^\beta_2(u) &= \frac{1}{2}\left(u^2 -2(\beta+2)u + (\beta+1)(\beta+2)\right) \\
    \vdots
\end{align*}
Thus the eigenfunctions of \eqref{eigenvalue} obtained in this way are the functions $\psi_k(x)=L_k^\beta\left(\frac{x^2}{2}\right)$, with eigenvalues $\lambda=-2k$:
\begin{align*}
    \psi_0 &= 1 \\
    \psi_1 &= 1+\beta - u
    = 1+\frac{\alpha-1}{2} - \frac{x^2}{2} \\
    &= \frac{\alpha+1}{2}-\frac{x^2}{2} \\
    \psi_2 &= \frac{1}{2}\left(u^2 -2(\beta+2)u + (\beta+1)(\beta+2)\right) \\
    &= \frac{1}{2}\left(\frac{x^4}{4} -2\left(\frac{\alpha-1}{2}+2\right)\frac{x^2}{2} + \left(\frac{\alpha-1}{2}+1\right)\left(\frac{\alpha-1}{2}+2\right)\right) \\
    &= \frac{1}{8}\left(x^4 - 2(\alpha+3)x^2 +(\alpha+1)(\alpha+3) \right) \\
    \vdots
\end{align*}
\emph{Choice of the self-adjoint operator and completeness.} In the variable $u$, the point $u=0$ is of limit-circle type exactly when $\beta<1$, that is, when $\alpha<3$. In this range the second solution of \eqref{ode} behaves like $x^{1-\alpha}$ (or $\log x$ when $\alpha=1$) near $0$, and $L_\alpha$ has other self-adjoint realizations with a different spectrum. As fixed in the introduction, $L_\alpha$ is the operator associated with the form $\int_0^\infty|f'|^2d\mu_\alpha$ on the space $W^{1,2}_A$ of the introduction with $N=1$ (Neumann realization). The polynomials $\psi_k$ belong to its domain: for $h$ in this space, the boundary term $x^\alpha\psi_k'(x)h(x)$ vanishes at $x=0$ because $\psi_k'(x)=O(x)$. Since the Laguerre polynomials $\{L_k^\beta\}_{k\geq0}$ form a complete orthogonal system in $L^2(u^\beta e^{-u}du)$, the functions $\psi_k$ form a complete orthogonal system in $L^2(\mu_\alpha)$. Hence the spectrum of $-L_\alpha$ is exactly $\{2k:k\geq0\}$, and every eigenvalue is simple.

We extend this to several dimensions by tensorization. For a multi-index $k = (k_1, \dots, k_N)$ and $\beta_i = \frac{\alpha_i -1}{2}$ we define the multidimensional Laguerre polynomials
\begin{equation}
    \Psi_k (x) = \prod_{i=1}^N \psi_{k_i}(x_i) = \prod_{i=1}^N L_{k_i}^{\beta_i}\left(\frac{x_i^2}{2}\right).
\end{equation}
If $\alpha_i>0$ for every $i$, then $\Psi_k$ is an eigenfunction of $-L_A$:
$$-L_A\Psi_k = 2|k| \Psi_k.$$
Since tensor products of complete orthogonal systems are complete, $\{\Psi_k\}_k$ is a complete orthogonal system in $L^2(\mu_A)$. Therefore the spectrum of $-L_A$ is $\{2j:j\geq0\}$, and the eigenvalue $2j$ has the eigenspace $E_j=\operatorname{span}\{\Psi_k:|k|=j\}$, of dimension $\#\{k:|k|=j\}$ (which is larger than $1$ when $N\geq2$ and $j\geq1$). We denote by $\Pi_j$ the orthogonal projection in $L^2(\mu_A)$ onto $E_j$.
\subsection{The case of vanishing exponents}

In this subsection only, we allow some of the $\alpha_i$ to vanish. We split the coordinate indices
into the two sets
$$
\mathcal{Z} = \{\, i : \alpha_i = 0 \,\}, \qquad \mathcal{P} = \{\, i : \alpha_i > 0 \,\}
$$
on the domain $\mr_*^N = \{\, x \in \mr^N : x_i > 0 \ \text{whenever}\ \alpha_i > 0 \,\}$.

\subsection*{The one-dimensional Gaussian direction}
For $\alpha = 0$ the operator \eqref{Laguerre} becomes the standard
Ornstein--Uhlenbeck operator
\begin{equation}
    L_0 = \p_{xx} - x\,\p_x.   \label{OU}
\end{equation}
Its eigenfunctions are the Hermite polynomials $He_m$,
$$
He_0 = 1, \quad He_1 = x, \quad He_2 = x^2 - 1, \quad He_3 = x^3 - 3x, \quad \dots,
$$
satisfying $-L_0 He_m = m\,He_m$ for $m = 0,1,2,\dots$. Note that in the Gaussian case we get all the natural numbers as eigenvalues, not only the even
integers.

\subsection*{The general monomial}
Since $L_A = \sum_{i=1}^N L^{(i)}$ is a sum of one-dimensional
operators in independent variables, its eigenfunctions are products and its eigenvalues are sums. Each
$L^{(i)}$ is the Laguerre operator \eqref{Laguerre} when $i \in \mathcal{P}$ and the
Ornstein--Uhlenbeck operator \eqref{OU} when $i \in \mathcal{Z}$. Thus, for multi-indices
$m $ and $k $, define
\begin{equation}
    \Psi_{m,k}(x) = \prod_{i \in \mathcal{Z}} He_{m_i}(x_i)
    \prod_{i \in \mathcal{P}} L_{k_i}^{\beta_i}\!\left(\frac{x_i^2}{2}\right),
    \qquad \beta_i = \frac{\alpha_i - 1}{2}.
\end{equation}
Then $\Psi_{m,k}$ is an eigenfunction of $L_A$ with
$$
L_A \Psi_{m,k}
= -\left( \sum_{i \in \mathcal{Z}} m_i + 2\sum_{i \in \mathcal{P}} k_i \right) \Psi_{m,k},
$$
equivalently for the operator $-L_A$
$$
-L_A \Psi_{m,k}
= \left( \sum_{i \in \mathcal{Z}} m_i + 2\sum_{i \in \mathcal{P}} k_i \right) \Psi_{m,k}
= \bigl( |m| + 2|k| \bigr)\Psi_{m,k}.
$$
The $\mathcal{Z}$-directions contribute to the eigenvalue in unit steps and the
$\mathcal{P}$-directions in steps of two. Setting $\mathcal{P} = \varnothing$ (that is, $A = 0$)
recovers the Gaussian case $-L_A \Psi_m = |m|\,\Psi_m$, while $\mathcal{Z} = \varnothing$
recovers the purely weighted case $-L_A \Psi_k = 2|k|\,\Psi_k$.

\subsection*{Verification for the mixed case}
Take $N = 2$ with $\alpha_1 = 0$ and $\alpha_2 = \alpha > 0$, so
$$
L_A = \p_{x_1 x_1} - x_1 \p_{x_1}
    + \p_{x_2 x_2} - \left(x_2 - \frac{\alpha}{x_2}\right)\p_{x_2}.
$$
Consider the eigenfunction indexed by $m = 1$, $k = 1$, namely
$$
\Phi = He_1(x_1)\,(2\psi_1)(x_2) = x_1\bigl[(\alpha+1) - x_2^2\bigr]
$$
for which we expect the eigenvalue $-(m + 2k) = -3$. Computing term by term,
\begin{align*}
    \p_{x_1 x_1}\Phi &= 0 \\
    - x_1 \p_{x_1}\Phi &= -x_1\bigl[(\alpha+1) - x_2^2\bigr] \\
    \p_{x_2 x_2}\Phi &= -2x_1 \\
    -\left(x_2 - \frac{\alpha}{x_2}\right)\p_{x_2}\Phi
    &= -\left(x_2 - \frac{\alpha}{x_2}\right)(-2x_1 x_2)
    = 2x_1 x_2^2 - 2\alpha x_1.
\end{align*}
Adding these and simplifying we get:
\begin{align*}
    L_A \Phi
    &= -(\alpha+1)x_1 + x_1 x_2^2 - 2x_1 + 2x_1 x_2^2 - 2\alpha x_1 \\
    &= -3(\alpha+1)x_1 + 3x_1 x_2^2 \\
    &= -3\bigl[(\alpha+1)x_1 - x_1 x_2^2\bigr] = -3\,\Phi.
\end{align*}
Thus $\Phi$
is an eigenfunction with $L_A \Phi = -3\Phi$, that is, $-L_A \Phi = 3\Phi
= (m + 2k)\Phi$. Another eigenfunction with the same eigenvalue is $\Phi = x_1^3-3x_1$, which corresponds to $m=3$, $k=0$.
\begin{align*}
    \p_{x_1 x_1}\Phi &= 6x_1 \\
    - x_1 \p_{x_1}\Phi &= -3x_1^3+3x_1 \\
    \p_{x_2 x_2}\Phi &= 0 \\
    -\left(x_2 - \frac{\alpha}{x_2}\right)\p_{x_2}\Phi
    &= 0.
\end{align*}
Adding these and simplifying we get:
\begin{align*}
    L_A \Phi
    &= 6x_1 -3x_1^3 +3x_1  \\
    &= -3(x_1^3 -3x_1) \\
    &= -3\Phi.
\end{align*}

\subsection{Improved Poincar\'e inequality}
In this subsection we show an improvement of the Poincar\'e inequality and prove Theorem~\ref{poincare deficit}.
\begin{theorem}\label{improved poincare}
Assume $\alpha_i > 0$ for all $i$. Define $\bar q = (\bar q_1, \dots, \bar q_N)$ by
$$\bar q_i = \frac{x_i^2 - (\alpha_i+1)}{\sqrt{2(\alpha_i+1)}}.$$
Then for all $u \in  W^{1,2}_A(\mr_*^N, \mu_A)$
\begin{gather}
    \int_{\mr_*^N} |\nabla u|^2  d\mu_A - 2\int_{\mr_*^N} \left|u - \int_{\mr_*^N} u  d\mu_A\right|^2 d\mu_A \notag \\ \geq \frac{1}{2}\int_{\mr_*^N} \left|\nabla\left(u - \int_{\mr_*^N} u  d\mu_A - \left(\int_{\mr_*^N} \bar q  u  d\mu_A\right)\cdot \bar q\right)\right|^2 d\mu_A. \label{IP}%
\end{gather}
Equality holds if and only if $u=\Pi_0u+\Pi_1u+\Pi_2u$.
\end{theorem}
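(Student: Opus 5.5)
The plan is to expand everything along the spectral decomposition of $-L_A$ from the previous subsection. For $u\in W^{1,2}_A(\mr_*^N,\mu_A)$, write $u=\sum_{j\geq0}\Pi_ju$ and set $c_j=\|\Pi_ju\|_{L^2(\mu_A)}$. Since the $E_j$ are mutually orthogonal eigenspaces of the self-adjoint operator $-L_A$ with eigenvalue $2j$, and $\E$ is its closed form, we have
\begin{equation*}
\int_{\mr_*^N}|\nabla u|^2d\mu_A=\sum_{j\geq1}2j\,c_j^2,\qquad \int_{\mr_*^N}\Bigl|u-\int_{\mr_*^N}u\,d\mu_A\Bigr|^2d\mu_A=\sum_{j\geq1}c_j^2 .
\end{equation*}
Here $\Pi_0u=\int u\,d\mu_A$. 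Both identities hold for every $u$ in the form domain, which by construction is $W^{1,2}_A$. The left-hand side of \eqref{IP} is then $\sum_{j\geq2}(2j-2)c_j^2$.

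Next, identify the function inside the gradient on the right of \eqref{IP}. I will check that the $\bar q_i$ form an orthonormal basis of $E_1$. Each $\bar q_i$ is a multiple of $\Psi_{e_i}$, the functions are mutually orthogonal by the product structure, and they have mean zero. Their normalization follows from $\operatorname{Var}_{\mu_{\alpha_i}}(x_i^2)=2(\alpha_i+1)$, which was computed in the proof of Theorem~\ref{poincareineq}. Hence
\begin{equation*}
\Bigl(\int_{\mr_*^N}\bar q\,u\,d\mu_A\Bigr)\cdot\bar q=\Pi_1u ,
\end{equation*}
so the function inside the gradient is $u-\Pi_0u-\Pi_1u=R_1u$. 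Its energy is $\sum_{j\geq2}2j\,c_j^2$, and the right-hand side of \eqref{IP} equals $\sum_{j\geq2}j\,c_j^2$.

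The inequality then reduces to the termwise comparison $2j-2\geq j$, which holds exactly when $j\geq2$. This gives
\begin{equation*}
\mathrm{LHS}-\mathrm{RHS}=\sum_{j\geq3}(j-2)c_j^2\geq0 .
\end{equation*}
Equality holds if and only if $c_j=0$ for all $j\geq3$, that is, $u=\Pi_0u+\Pi_1u+\Pi_2u$.

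The main point requiring care is the justification that the energy identity holds for all $u\in W^{1,2}_A$ and not only for finite combinations of the $\Psi_k$. This uses the completeness of $\{\Psi_k\}$ in $L^2(\mu_A)$ established in the previous subsection, together with the fact that the quadratic form of the self-adjoint operator $-L_A$ is the closed form $\E$ with domain $W^{1,2}_A$. Given these, $\E(u)=\|(-L_A)^{1/2}u\|^2=\sum_j 2j\,c_j^2$ by the spectral theorem. The same identity applied to $R_1u$ is valid because $R_1u$ lies in the form domain, being $u$ minus a finite-dimensional polynomial projection.
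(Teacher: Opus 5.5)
Your proposal is correct and follows the paper's proof essentially line for line. Both expand $u$ in the eigenspaces $E_j$ and identify $\bigl(\int \bar q\,u\,d\mu_A\bigr)\cdot\bar q$ as $\Pi_1u$ using $\int q_i^2\,d\mu_A=2(\alpha_i+1)$, then compare $2j-2\geq j$ termwise for $j\geq2$. Your explicit formula $\mathrm{LHS}-\mathrm{RHS}=\sum_{j\geq3}(j-2)c_j^2$ only makes the equality case a little more transparent.
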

\begin{proof}
Write $u=\sum_{j\geq0}\Pi_ju$ and $c_j=\|\Pi_ju\|_{L^2(\mu_A)}$. Since the eigenspaces $E_j$ are orthogonal, $-L_A\Pi_ju=2j\Pi_ju$, and $W^{1,2}_A(\mr_*^N,\mu_A)$ is the form domain of $-L_A$, the spectral theorem gives
$$\int_{\mr_*^N} |u|^2 d\mu_A = \sum_{j\geq 0} c_j^2, \qquad \int_{\mr_*^N} |\nabla u|^2 d\mu_A = \sum_{j\geq 0} 2j  c_j^2.$$
Without loss of generality, assume $\int_{\mr_*^N} u  d\mu_A = 0$, so that $c_0 = 0$, since subtracting a constant changes neither side. The left-hand side is then
\begin{equation}
    \int_{\mr_*^N} |\nabla u|^2 d\mu_A - 2\int_{\mr_*^N} |u|^2 d\mu_A = \sum_{j\geq 1} (2j - 2) c_j^2 = \sum_{j\geq 2} (2j-2) c_j^2, \label{eq:spdeficit}
\end{equation}
where the eigenvalue-$2$ terms ($j=1$) vanish since $2j - 2 = 0$. The eigenspace of eigenvalue $2$ is spanned by the quadratics $q_i = x_i^2 - (\alpha_i+1)$ with $\int_{\mr_*^N} q_i^2  d\mu_A = 2(\alpha_i+1)$. Normalizing, $\bar q_i = q_i/\sqrt{2(\alpha_i+1)}$, the projection $\Pi_1 u$ is
\begin{equation}
    \Pi_1 u = \left(\int_{\mr_*^N} \bar q  u  d\mu_A\right)\cdot \bar q = \sum_{i=1}^N \left(\int_{\mr_*^N} \bar q_i  u  d\mu_A\right)\bar q_i.
\end{equation}
We have $ u - \Pi_1 u =\sum_{j \geq 2} \Pi_ju$ and
\begin{equation}
    \int_{\mr_*^N} |\nabla(u-\Pi_1u)|^2 d\mu_A = \sum_{j \geq 2} 2j c_j^2. \label{eq:firstremainder}
\end{equation}
For $j \geq 2$ the eigenvalue $\lambda_j=2j$ satisfies $\lambda_j \geq 4$. Thus
$$\lambda_j - 2 \geq \frac{1}{2}\lambda_j \iff \frac{1}{2}\lambda_j \geq 2 \iff \lambda_j \geq 4,$$
so applying this to \eqref{eq:spdeficit} we get
$$\sum_{j\geq 2} (2j-2) c_j^2 \geq \frac{1}{2}\sum_{j\geq 2} 2j  c_j^2 = \frac{1}{2}\int_{\mr_*^N} |\nabla (u - \Pi_1u)|^2 d\mu_A.$$

\begin{gather}
    \int_{\mr_*^N} |\nabla u|^2 d\mu_A - 2\int_{\mr_*^N} \left|u - \int_{\mr_*^N} u  d\mu_A\right|^2 d\mu_A \notag \\
    \geq \frac{1}{2}\int_{\mr_*^N} \left|\nabla\left(u - \int_{\mr_*^N} u  d\mu_A - \left(\int_{\mr_*^N} \bar q  u  d\mu_A\right)\cdot \bar q\right)\right|^2 d\mu_A. \label{ineq:imp Poincare}%
\end{gather}
The only inequality used is $2j - 2 \geq j$ for $j \geq 2$, an equality at $j = 2$. Hence equality holds if and only if $c_j = 0$ for all $j > 2$, that is $u=\Pi_0u+\Pi_1u+\Pi_2u$.
\end{proof}
\begin{corollary}\label{cor:L2 poincare}
    Assume $\alpha_j > 0$ for all $j$ and let $\bar{q}$ be as in Theorem~\ref{improved poincare}. Then for all $u\in W^{1,2}_A(\mr_*^N,\mu_A)$,
    \begin{multline*}
        \int_{\mr_*^N} |\nabla u|^2 d\mu_A - 2\int_{\mr_*^N} \left|u - \int_{\mr_*^N} u  d\mu_A\right|^2 d\mu_A
        \\ \geq 2\int_{\mr_*^N} \left| u -\int_{\mr_*^N} u  d\mu_A - \left(\int_{\mr_*^N} \bar q  u  d\mu_A\right) \cdot \bar{q} \right|^2 d\mu_A.
    \end{multline*}
    The constant $2$ is sharp, and equality holds if and only if $u=\Pi_0u+\Pi_1u+\Pi_2u$.
\end{corollary}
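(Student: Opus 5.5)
We argue exactly as in the proof of Theorem~\ref{improved poincare}, using the spectral decomposition of Section~\ref{sec:spectral}. Since subtracting a constant changes neither side, we may assume $\int_{\mr_*^N}u\,d\mu_A=0$, so that $\Pi_0u=0$. Write $c_j=\|\Pi_ju\|_{L^2(\mu_A)}$.

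By \eqref{eq:spdeficit}, the left-hand side equals
\[
\sum_{j\geq2}(2j-2)c_j^2 .
\]
As computed there, $\Pi_1u=\left(\int_{\mr_*^N}\bar q\,u\,d\mu_A\right)\cdot\bar q$, so the function inside the norm on the right-hand side is $u-\Pi_1u=\sum_{j\geq2}\Pi_ju$. Orthogonality of the eigenspaces then gives
\[
\int_{\mr_*^N}|u-\Pi_1u|^2d\mu_A=\sum_{j\geq2}c_j^2 .
\]
The inequality therefore reduces to the termwise bound $2j-2\geq2$ for $j\geq2$, which holds. This gives
\[
\sum_{j\geq2}(2j-2)c_j^2\geq2\sum_{j\geq2}c_j^2 .
\]
One could also obtain the same conclusion by combining Theorem~\ref{improved poincare} with the Poincar\'e inequality on the range of $R_1$, where $-L_A\geq4$. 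That route gives
\[
\tfrac12\int|\nabla R_1u|^2\,d\mu_A\geq2\int|R_1u|^2\,d\mu_A ,
\]
but the direct spectral computation is cleaner.

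\textbf{Equality.} The bound $2j-2\geq2$ is strict for every $j\geq3$. Hence equality holds if and only if $c_j=0$ for all $j\geq3$, that is, $u=\Pi_0u+\Pi_1u+\Pi_2u$.

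\textbf{Sharpness of the constant $2$.} Take $u=\Psi_k$ with $|k|=2$. Such functions exist for every $N\geq1$; for example $\psi_2(x_1)$, which lies in $W^{1,2}_A$ since it is a polynomial. For this $u$ both sides equal $2c_2^2\neq0$. Therefore no constant larger than $2$ can work.

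The only point needing care is the justification that the form $\int|\nabla u|^2d\mu_A$ equals $\sum_j 2j\,c_j^2$ for every $u$ in $W^{1,2}_A$. This follows because $W^{1,2}_A$ is the form domain of the Neumann realization of $-L_A$, and the $\Psi_k$ form a complete eigenbasis, as already used in Theorem~\ref{improved poincare}.
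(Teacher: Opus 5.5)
Your proof is correct and follows the paper's argument: you reduce to mean zero, identify the right-hand side with $2\sum_{j\geq2}c_j^2$ via $u-\Pi_1u=\sum_{j\geq2}\Pi_ju$, compare termwise with the spectral form $\sum_{j\geq2}(2j-2)c_j^2$ of the deficit using $2j-2\geq2$, and get equality exactly when $c_j=0$ for all $j\geq3$. Your explicit sharpness witness $\psi_2(x_1)\in E_2$, for which both sides equal $2c_2^2$, is a useful addition that the paper leaves implicit in its equality statement.
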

\begin{proof}
    As in the proof of Theorem~\ref{improved poincare} we may assume $\int_{\mr_*^N}u\,d\mu_A=0$. Then $u -\left(\int_{\mr_*^N} \bar q  u  d\mu_A\right) \cdot \bar{q}=u-\Pi_1u=\sum_{j\geq2}\Pi_ju$, so its squared norm is $\sum_{j\geq2}c_j^2$. By \eqref{eq:spdeficit},
    $$\int_{\mr_*^N} |\nabla u|^2 d\mu_A - 2\int_{\mr_*^N} |u|^2 d\mu_A=\sum_{j\geq2}(2j-2)c_j^2\geq 2\sum_{j\geq2}c_j^2,$$
    since $2j-2\geq2$ for $j\geq2$, with equality only for $j=2$.
\end{proof}

\begin{proof}[Proof of Theorem~\ref{poincare deficit}]
    We may again assume $\int_{\mr_*^N}u\,d\mu_A=0$. By definition of $R_iu$ we have
    \begin{equation}
        \int_{\mr_*^N} \left|\nabla R_iu\right|^2 d\mu_A  =\sum_{j=i+1}^\infty 2j\int_{\mr_*^N}|\Pi_ju|^2d\mu_A = \sum_{j =i+1}^\infty 2jc_j^2.\label{eq:i-th remainder}
    \end{equation}
From \eqref{eq:spdeficit} we have the identity
    \begin{align*}
        \int_{\mr_*^N} |\nabla u|^2 d\mu_A - 2\int_{\mr_*^N} |u|^2 d\mu_A &=  \sum_{i\geq 2} \frac{2}{1}(i-1) c_i^2 \\
        &= \frac{1}{2}\sum_{i\geq 2}2i c_i^2 + \sum_{i\geq 2}\frac{2}{2}(i-2) c_i^2 \\
      \text{by \eqref{eq:firstremainder}}  &= \frac{1}{2}\int_{\mr_*^N} |\nabla R_1u|^2 d\mu_A + \sum_{i\geq 3}(i-2) c_i^2 \\
      &= \frac{1}{2}\int_{\mr_*^N} |\nabla R_1u|^2 d\mu_A + \sum_{i\geq 3}\left(\frac{i}{3}+\frac{2(i-3)}{3}\right) c_i^2 \\
      \text{by \eqref{eq:i-th remainder}} &= \frac{1}{2}\int_{\mr_*^N} |\nabla R_1u|^2 d\mu_A + \frac{1}{6} \int_{\mr_*^N} |\nabla R_2u|^2 d\mu_A +  \sum_{i\geq 4}\frac{2(i-3)}{3} c_i^2 \\
      \vdots
    \end{align*}
    In general, after $\ell-1$ steps the factor in front of $c_i^2$ is $r_\ell = \frac{2}{\ell}(i-\ell)$. When we extract the next remainder, we have the relation
    \begin{equation}
        \frac{2}{\ell}(i-\ell) = 2i b_\ell + \frac{2}{\ell+1}(i-(\ell+1)).\label{eq:coefficients}
    \end{equation}
    Solving for $b_\ell$ we get
    \begin{equation}
        b_\ell = \frac{1}{\ell(\ell+1)}.
    \end{equation}
    Thus we get
    \begin{align}
         &\int_{\mr_*^N} |\nabla u|^2 d\mu_A - 2\int_{\mr_*^N} |u|^2 d\mu_A \notag\\
         =& \frac{1}{2}\int_{\mr_*^N} |\nabla R_1u|^2 d\mu_A + \dots +  b_\ell\int_{\mr_*^N} |\nabla R_\ell u|^2 d\mu_A  +\sum_{i\geq \ell+2}\frac{2}{\ell+1}(i-(\ell+1)) c_i^2 \notag\\
         =& \sum_{i=1}^\infty \frac{1}{i(i+1)} \int_{\mr_*^N} |\nabla R_iu|^2 d\mu_A.
    \end{align}
    In the last step we used that the remainder tends to $0$:
    $$\sum_{i\geq \ell+2}\frac{2}{\ell+1}(i-(\ell+1)) c_i^2\leq\frac{1}{\ell+1}\sum_{i\geq1}2ic_i^2=\frac{1}{\ell+1}\int_{\mr_*^N}|\nabla u|^2d\mu_A\longrightarrow0\qquad(\ell\to\infty).$$
\end{proof}
\section{Stability of the Brascamp--Lieb inequality\texorpdfstring{---Proofs of Theorems~\ref{thm:BL stability} and \ref{thm:BL stability homogeneous}}{---Proofs of Theorems 1.3 and 1.4}}\label{sec:stability of BL}
\subsection{Monomial case}
In this subsection we prove the stability of the Brascamp--Lieb inequality in the monomial case.
\begin{proof}[Proof of Theorem~\ref{thm:BL stability}]
    Since $\int_{\mr_*^N}f\,d\mu_A=0$ and $\mu_A$ satisfies the Poincar\'e inequality, there is a unique solution $u$ of the Poisson equation $-L_Au = f$ with $\int_{\mr_*^N}u\,d\mu_A=0$. Integrating by parts (the boundary terms vanish since $x_i^{\alpha_i}=0$ on $\{x_i=0\}$), $\int_{\mr_*^N} f x_i\,d\mu_A=\int_{\mr_*^N}(-L_Au)x_i\,d\mu_A=\int_{\mr_*^N}\nabla u\cdot\nabla x_i\,d\mu_A=\int_{\mr_*^N}\p_iu\,d\mu_A$. Hence
    \begin{align}
        \int_{\mr_*^N} \nabla u d\mu_A = \int_{\mr_*^N} fx d\mu_A = a.
    \end{align}
    Next we define $h = u - a\cdot x$ and it follows that $$v = \nabla h  = \nabla u - a,$$
    and also note that
    \begin{equation}
        \int_{\mr_*^N} v d\mu_A = \int_{\mr_*^N} \nabla u d\mu_A - a = 0.
    \end{equation}
    Next we write the deficit in the dual form. Let $B=\nabla^2W$. Differentiating $L_Au=\Delta u-\nabla W\cdot\nabla u$ gives the commutation formula $\nabla L_Au=L_A\nabla u-B\nabla u$, where $L_A$ acts on each component. Hence $\nabla f=(-L_A)\nabla u+B\nabla u$ and
    \begin{equation*}
        \langle B^{-1}\nabla f,\nabla f\rangle=\langle B^{-1}(-L_A)\nabla u,(-L_A)\nabla u\rangle+2\langle(-L_A)\nabla u,\nabla u\rangle+\langle B\nabla u,\nabla u\rangle.
    \end{equation*}
    Moreover $\int_{\mr_*^N}\langle(-L_A)\nabla u,\nabla u\rangle d\mu_A=\int_{\mr_*^N}\|\nabla^2u\|^2d\mu_A$ and
    \begin{equation*}
    \operatorname{Var}_{\mu_A}(f)=\int_{\mr_*^N}|L_Au|^2d\mu_A=\int_{\mr_*^N}\Gamma_2(u)\,d\mu_A=\int_{\mr_*^N}\left(\|\nabla^2u\|^2+\langle B\nabla u,\nabla u\rangle\right)d\mu_A.
    \end{equation*}
    Integrating the previous identity, we obtain (the integrations by parts are justified by the cutoff argument of Lemma~\ref{lemma:IBPw identity}, since the weight vanishes on $\p\mr_*^N$)
    \begin{equation}
        \delta_{BL}(f) = \int_{\mr_*^N} \|\nabla^2 u \|^2 d\mu_A + \int_{\mr_*^N} \langle B^{-1}(-L_A)\nabla u, (-L_A)\nabla u\rangle d\mu_A. \label{eq:deficit in dr}%
    \end{equation}
    Since $(-L_A)(a) = 0$ and $\nabla v = \nabla^2 u$ we can rewrite \eqref{eq:deficit in dr} in terms of $v$:
    \begin{equation}
        \delta_{BL}(f) = \underbrace{\int_{\mr_*^N} \|\nabla v \|^2 d\mu_A}_{\mathcal A} + \underbrace{\int_{\mr_*^N} \langle B^{-1}(-L_A) v, (-L_A) v\rangle d\mu_A }_{\mathcal C}.\label{eq:deficit in dr in v}%
    \end{equation}
    Also note that
    \begin{align}
        -L_Ah &= -L_A(u - a\cdot x) \notag \\
        &= f +L_A(a\cdot x) \notag \\
        &= f +(\Delta - \nabla W \cdot \nabla)(a\cdot x) \notag \\
        &= f - \langle\nabla W, a\rangle. \label{eq:Lh}%
    \end{align}
    So we get
    \begin{align}
        \int_{\mr_*^N} \left|f - \langle\nabla W, a\rangle\right|^2 d\mu_A &= \int_{\mr_*^N} |L_Ah|^2 d\mu_A \notag \\
        &= \int_{\mr_*^N} \Gamma_2(h)  d\mu_A \notag \\
        &= \int_{\mr_*^N} \|\nabla^2 h\|^2  + \langle \nabla^2W \nabla h, \nabla h\rangle d\mu_A \notag \\
        &= \underbrace{\int_{\mr_*^N} \|\nabla v\|^2 d\mu_A}_{\mathcal A} + \underbrace{\int_{\mr_*^N} \langle B  v,  v\rangle d\mu_A }_{\mathcal D}. \label{BLstab:distance to opt}
    \end{align}
    Since $\int_{\mr_*^N}v_i\,d\mu_A=0$, the Poincar\'e inequality of Theorem~\ref{poincareineq} applied to each component $v_i$ gives
    \begin{equation}
        \int_{\mr_*^N} |v_i|^2 d\mu_A \leq \frac{1}{2} \int_{\mr_*^N} |\nabla v_i|^2d\mu_A. \label{BLstab:helper1}
    \end{equation}
    Note that
    \begin{equation}
        (\alpha_i-1)x_i^{\alpha_i-2}e^{-\frac{x_i^2}{2}} = \p_i \left( x_i^{\alpha_i-1}e^{-\frac{x_i^2}{2}}\right) + x_i^{\alpha_i}e^{-\frac{x_i^2}{2}}, \label{BLstab:helper2}
    \end{equation}
    and that $\int_{\mr_*^N}\frac{|v_i|^2}{x_i^2}d\mu_A<\infty$. Indeed, by Lemma~\ref{lem:monomial hypotheses} the weight $w=x^A$ satisfies the hypotheses of Lemma~\ref{lemma:IBPw identity} with $\theta=\max_i\frac1{\alpha_i}<1$. Moreover $v\in L^2(\mu_A)$, and $\int_{\mr_*^N}\|\nabla v\|^2d\mu_A\leq\int_{\mr_*^N}\Gamma_2(u)\,d\mu_A=\int_{\mr_*^N}|f|^2d\mu_A<\infty$. Hence $\int_{\mr_*^N}\sum_j\frac{\alpha_j^2}{x_j^2}|v|^2d\mu_A<\infty$. This finiteness is needed to solve the quadratic inequality \eqref{BLstab:helper4} below. The boundary terms at $x_i=0$ and $x_i=\infty$ in the integration by parts below vanish. This is justified with the cutoff functions exactly as in the proof of Lemma~\ref{lemma:IBPw identity}.
    Thus, using Tonelli's theorem, integration by parts in $x_i$ and \eqref{BLstab:helper2} (up to the normalizing constant of $\mu_{\alpha_i}$, which appears on both sides), we get
    \begin{align}
       \int_{\mr_*^N}(\alpha_i-1) \frac{|v_i|^2}{x_i^2} d\mu_A &= \int_{\mr_*^{N-1}} \int_{0}^\infty (\alpha_i-1) \frac{|v_i|^2}{x_i^2} d\mu_{\alpha_i} d\mu_{\neq i} \notag\\
       &= \int_{\mr_*^{N-1}} \int_{0}^\infty (\alpha_i-1) |v_i|^2 x_i^{\alpha_i-2} e^{-\frac{x_i^2}{2}}dx_i d\mu_{\neq i} \notag\\
       &\overset{\eqref{BLstab:helper2}}{=} \int_{\mr_*^{N-1}} \int_{0}^\infty  |v_i|^2 \left(\p_i \left(x_i^{\alpha_i-1}e^{-\frac{x_i^2}{2}}\right) + x_i^{\alpha_i}e^{-\frac{x_i^2}{2}}\right)dx_i d\mu_{\neq i} \notag \\
       &= \int_{\mr_*^{N-1}} \int_{0}^\infty  |v_i|^2 \p_i \left(x_i^{\alpha_i-1}e^{-\frac{x_i^2}{2}}\right) dx_i d\mu_{\neq i} \notag \\ &\qquad + \int_{\mr_*^{N-1}} \int_{0}^\infty  |v_i|^2 x_i^{\alpha_i}e^{-\frac{x_i^2}{2}}dx_i d\mu_{\neq i} \notag \\
       &= \int_{\mr_*^{N-1}} \int_{0}^\infty -\p_i(|v_i|^2) \left(x_i^{\alpha_i-1}e^{-\frac{x_i^2}{2}}\right) dx_i d\mu_{\neq i} + \int_{\mr_*^{N}}  |v_i|^2 d\mu_A \notag \\
       &=  -2\int_{\mr_*^{N-1}} \int_{0}^\infty v_i \p_iv_i\, x_i^{\alpha_i-1}e^{-\frac{x_i^2}{2}} dx_i d\mu_{\neq i} + \int_{\mr_*^{N}}  |v_i|^2 d\mu_A \notag \\
       &= -2\int_{\mr_*^{N}} \frac{v_i \p_iv_i}{x_i} d\mu_A + \int_{\mr_*^{N}}  |v_i|^2 d\mu_A \notag \\
       &\leq 2 \left|\int_{\mr_*^{N}} \frac{v_i \p_iv_i}{x_i} d\mu_A\right| + \int_{\mr_*^{N}}  |v_i|^2 d\mu_A \notag \\
       &\overset{\text{C-S}}{\leq} 2 \left(\int_{\mr_*^{N}} \frac{|v_i|^2}{x_i^2} d\mu_A\right)^{\frac{1}{2}}\left(\int_{\mr_*^{N}} |\p_i v_i|^2d\mu_A\right)^{\frac{1}{2}} + \int_{\mr_*^{N}}  |v_i|^2 d\mu_A \notag \\
       & \overset{\eqref{BLstab:helper1}}{\leq}2 \left(\int_{\mr_*^{N}} \frac{|v_i|^2}{x_i^2} d\mu_A\right)^{\frac{1}{2}}\left(\int_{\mr_*^{N}} |\nabla v_i|^2 d\mu_A\right)^{\frac{1}{2}} + \frac{1}{2}\int_{\mr_*^{N}}  |\nabla v_i|^2 d\mu_A. \label{BLstab:helper3}
    \end{align}
    If $\int_{\mr_*^{N}}  |\nabla v_i| ^2 d\mu_A=0$, then $v_i$ is constant, hence $v_i=0$, and there is nothing to prove. Otherwise, dividing \eqref{BLstab:helper3} by $\int_{\mr_*^{N}}  |\nabla v_i| ^2 d\mu_A $ we get
    \begin{equation*}
        (\alpha_i-1)\frac{\int_{\mr_*^N}\frac{|v_i|^2}{x_i^2} d\mu_A }{\int_{\mr_*^{N}} |\nabla v_i|^2 d\mu_A } \leq 2 \frac{\left(\int_{\mr_*^{N}} \frac{|v_i|^2}{x_i^2} d\mu_A\right)^{\frac{1}{2}}}{\left(\int_{\mr_*^{N}} |\nabla v_i|^2 d\mu_A\right)^{\frac{1}{2}}} + \frac{1}{2}.
    \end{equation*}
    Let $y_i = \frac{\left(\int_{\mr_*^{N}} \frac{|v_i|^2}{x_i^2} d\mu_A\right)^{\frac{1}{2}}}{\left(\int_{\mr_*^{N}} |\nabla v_i|^2 d\mu_A\right)^{\frac{1}{2}}} $, which is finite. Then we can rewrite this as
    \begin{equation}
        (\alpha_i-1)y_i^2 -2y_i -\frac{1}{2} \leq 0. \label{BLstab:helper4}
    \end{equation}
    Solving \eqref{BLstab:helper4} we get
    \begin{equation}
        y_i \leq \frac{1+\sqrt{\frac{\alpha_i+1}{2}}}{\alpha_i-1}.\label{BLstab:helper5}
    \end{equation}
    Thus
    \begin{align}
         \frac{\int_{\mr_*^{N}} \frac{|v_i|^2}{x_i^2} d\mu_A}{\int_{\mr_*^{N}} |\nabla v_i|^2 d\mu_A} =  y_i^2 &\leq \left(\frac{1+\sqrt{\frac{\alpha_i+1}{2}}}{\alpha_i-1}\right)^2 \notag\\
         \int_{\mr_*^{N}} \frac{|v_i|^2}{x_i^2} d\mu_A &\leq \left(\frac{1+\sqrt{\frac{\alpha_i+1}{2}}}{\alpha_i-1}\right)^2\int_{\mr_*^{N}} |\nabla v_i|^2 d\mu_A. \label{BLstab:helper6}
    \end{align}
    Combining \eqref{BLstab:helper1} and \eqref{BLstab:helper6} we get the estimate
    \begin{align}
        \int_{\mr_*^{N}} \left(1+\frac{\alpha_i}{x_i^2}\right) |v_i|^2  d\mu_A &\leq \frac{1}{2} \int_{\mr_*^N} |\nabla v_i|^2d\mu_A  +  \alpha_i\left(\frac{1+\sqrt{\frac{\alpha_i+1}{2}}}{\alpha_i-1}\right)^2\int_{\mr_*^{N}} |\nabla v_i|^2 d\mu_A \notag\\
        &=\left(\frac{1}{2} + \alpha_i\left(\frac{1+\sqrt{\frac{\alpha_i+1}{2}}}{\alpha_i-1}\right)^2\right)\int_{\mr_*^{N}} |\nabla v_i|^2 d\mu_A. \label{BLstab:helper7}
    \end{align}
    Summing \eqref{BLstab:helper7} over $i$ we get
    \begin{align}
        \underbrace{\int_{\mr_*^{N}}  \sum_i \left(1+\frac{\alpha_i}{x_i^2}\right) |v_i|^2}_{\mathcal D}  d\mu_A &\leq \sum_i \left(\frac{1}{2} + \alpha_i\left(\frac{1+\sqrt{\frac{\alpha_i+1}{2}}}{\alpha_i-1}\right)^2\right)\int_{\mr_*^{N}} |\nabla v_i|^2 d\mu_A \notag \\
        &\leq \max_{i}\left[\frac{1}{2} + \alpha_i\left(\frac{1+\sqrt{\frac{\alpha_i+1}{2}}}{\alpha_i-1}\right)^2\right] \sum_i\int_{\mr_*^{N}} |\nabla v_i|^2 d\mu_A \notag \\
        &= \kappa_A^* \int_{\mr_*^{N}} \sum_i\sum_j |\p_jv_i|^2 d\mu_A \notag \\
        &= \kappa_A^* \underbrace{\int_{\mr_*^{N}} \|\nabla v\|^2 d\mu_A }_{\mathcal A}. \label{BLstab:ADbound}
    \end{align}
    Here $\kappa_A^* = \max_{i}\left[\frac{1}{2} + \alpha_i\left(\frac{1+\sqrt{\frac{\alpha_i+1}{2}}}{\alpha_i-1}\right)^2\right]=\frac{1}{\kappa_A}$. Next consider the term
    \begin{align}
        \mathcal A &=  \int_{\mr_*^{N}}\|\nabla v\|^2 d\mu_A  =  \int_{\mr_*^{N}} \langle -L_Av, v\rangle d\mu_A \notag\\
        &=  \int_{\mr_*^{N}} \langle B^{\frac{1}{2}}B^{-\frac{1}{2}}(-L_Av), v\rangle d\mu_A \notag \\
        &= \int_{\mr_*^{N}} \langle B^{-\frac{1}{2}}(-L_Av), B^{\frac{1}{2}}v\rangle d\mu_A \notag \\
       \text{by C-S} \quad &\leq \left(\int_{\mr_*^{N}} \langle B^{-\frac{1}{2}}(-L_Av), B^{-\frac{1}{2}}(-L_Av)\rangle d\mu_A\right)^{\frac{1}{2}}\left(\int_{\mr_*^{N}} \langle B^{\frac{1}{2}}v, B^{\frac{1}{2}}v\rangle d\mu_A\right)^{\frac{1}{2}} \notag \\
       &= \left(\int_{\mr_*^{N}} \langle B^{-1}(-L_Av), (-L_Av)\rangle d\mu_A\right)^{\frac{1}{2}}\left(\int_{\mr_*^{N}} \langle Bv, v\rangle d\mu_A\right)^{\frac{1}{2}}  \notag \\
       &= \mathcal C^{\frac{1}{2}}\mathcal D^{\frac{1}{2}}.\label{BLstab:helper8}
    \end{align}
In other words, \eqref{BLstab:helper8} gives us the bound (if $\mathcal D=0$ then $v=0$ and there is nothing to prove)
\begin{equation}
    \mathcal A^2 \leq \mathcal C\mathcal D \iff \mathcal C \geq \frac{\mathcal A^2}{\mathcal D} \implies \mathcal C\geq \frac{1}{\kappa_A^*}\mathcal A \quad \text{by \eqref{BLstab:ADbound}} \label{BLstab:ACbound}
\end{equation}
Finally putting everything together we have

    \begin{align}
        \delta_{BL}(f) &= \mathcal A+\mathcal C \qquad \text{by \eqref{eq:deficit in dr in v}} \notag \\
        &\geq  \mathcal A +  \frac{1}{\kappa_A^*}\mathcal A\qquad \text{by \eqref{BLstab:ACbound}} \notag \\
        &\geq  \frac{1}{\kappa_A^*}\mathcal D +  \frac{1}{\kappa_A^*}\mathcal A\qquad \text{by \eqref{BLstab:ADbound}} \notag \\
        &=  \frac{1}{\kappa_A^*} \int_{\mr_*^N} \left|f - \langle\nabla W, a\rangle\right|^2 d\mu_A \qquad \text{by \eqref{BLstab:distance to opt}} \label{BLstab:specific a} \\
        &\geq \frac{1}{\kappa_A^*} \inf_{c\in\mr^N} \int_{\mr_*^N} \left|f - \langle\nabla W, c\rangle\right|^2 d\mu_A. \label{BLstability}%
    \end{align}
    This gives the $L^2$ stability.
\end{proof}
\begin{corollary}[Gradient stability of BL] \label{thm:gradient stability}
    Under the same assumptions as in Theorem~\ref{thm:BL stability}, we have
    \begin{equation}
        \delta_{BL}(f) \geq C_A \int_{\mr_*^{N}} \langle B^{-1}(\nabla f - Ba), \nabla f - Ba \rangle d\mu_A, \label{eq:BL gradient stability}
    \end{equation}
    where $B=\nabla^2W$, $a=\int_{\mr_*^N}xf\,d\mu_A$, and $C_A=\frac{\kappa_A}{1+\kappa_A}$.
\end{corollary}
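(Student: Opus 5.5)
We reuse the notation and identities from the proof of Theorem~\ref{thm:BL stability}: $u$ solves $-L_Au=f$ with $\int u\,d\mu_A=0$, $a=\int \nabla u\,d\mu_A=\int xf\,d\mu_A$, and $v=\nabla u-a$. Recall
\begin{equation*}
\delta_{BL}(f)=\mathcal A+\mathcal C,\qquad \mathcal A=\int\|\nabla v\|^2d\mu_A,\quad \mathcal C=\int\langle B^{-1}(-L_A)v,(-L_A)v\rangle d\mu_A,\quad \mathcal D=\int\langle Bv,v\rangle d\mu_A .
\end{equation*}
The plan is to express the quantity on the right of \eqref{eq:BL gradient stability} in these variables and bound it by $\mathcal A$, $\mathcal C$ and $\mathcal D$.

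By the commutation formula, $\nabla f=(-L_A)\nabla u+B\nabla u$. Since $(-L_A)a=0$, we get
\begin{equation*}
\nabla f-Ba=(-L_A)v+Bv.
\end{equation*}
Expanding the quadratic form gives
\begin{equation*}
\int\langle B^{-1}(\nabla f-Ba),\nabla f-Ba\rangle d\mu_A=\mathcal C+2\int\langle(-L_A)v,v\rangle d\mu_A+\mathcal D=\mathcal C+2\mathcal A+\mathcal D .
\end{equation*}
Here $\int\langle(-L_A)v,v\rangle d\mu_A=\int\|\nabla v\|^2d\mu_A$, with the integration by parts justified by the same cutoff argument used for \eqref{eq:deficit in dr}. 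This step, the justification that all terms are finite and that the integrations by parts hold, is the only delicate point. It is handled exactly as in the proof of Theorem~\ref{thm:BL stability}, and $\mathcal D<\infty$ was established there.

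It remains to show $\mathcal C+2\mathcal A+\mathcal D\le \frac{1+\kappa_A}{\kappa_A}(\mathcal A+\mathcal C)$. From \eqref{BLstab:ADbound} we have $\mathcal D\le\kappa_A^{-1}\mathcal A$. For the middle term we use the Cauchy--Schwarz bound \eqref{BLstab:helper8}, $\mathcal A\le\mathcal C^{1/2}\mathcal D^{1/2}$, together with AM--GM:
\begin{equation*}
2\mathcal A\le 2\mathcal C^{1/2}\mathcal D^{1/2}\le \kappa_A^{-1}\mathcal C+\kappa_A\mathcal D\le \kappa_A^{-1}\mathcal C+\mathcal A .
\end{equation*}
Combining the two bounds,
\begin{equation*}
\mathcal C+2\mathcal A+\mathcal D\le (1+\kappa_A^{-1})\mathcal C+(1+\kappa_A^{-1})\mathcal A=\frac{1+\kappa_A}{\kappa_A}\,\delta_{BL}(f).
\end{equation*}
This is \eqref{eq:BL gradient stability} with $C_A=\frac{\kappa_A}{1+\kappa_A}$.

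Note that the cruder route, which instead uses $\mathcal C\ge\kappa_A\mathcal A$ from \eqref{BLstab:ACbound} to bound $\mathcal A$, loses a factor and does not give this constant. The AM--GM splitting with weight $\kappa_A$ is therefore the choice that recovers the stated $C_A$ exactly.
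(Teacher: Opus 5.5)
Your proof is correct and gives the stated constant, but it is organized differently from the paper's. The paper sets $g=f-\langle\nabla W,a\rangle$ and argues that $\delta_{BL}(g)=\delta_{BL}(f)$, because $\langle\nabla W,a\rangle$ lies in the kernel of the nonnegative quadratic form $\delta_{BL}$ (Cauchy--Schwarz for the associated bilinear form). It then writes $\int\langle B^{-1}\nabla g,\nabla g\rangle\,d\mu_A=\delta_{BL}(f)+\int|g|^2d\mu_A$ and bounds the last term by $\kappa_A^{-1}\delta_{BL}(f)$ using \eqref{BLstab:specific a} at the specific vector $a$.

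You instead expand $\nabla f-Ba=(-L_A)v+Bv$ directly and obtain $\mathcal C+2\mathcal A+\mathcal D$. Since $\delta_{BL}(f)=\mathcal A+\mathcal C$ by \eqref{eq:deficit in dr in v} and $\int|g|^2d\mu_A=\mathcal A+\mathcal D$ by \eqref{BLstab:distance to opt}, your expansion is exactly the paper's identity. Your route obtains it without the kernel argument, without checking $\int g\,d\mu_A=0$, and without making sense of $\delta_{BL}$ on $g$, which need not lie in $W^{1,2}_A(\mr_*^N,\mu_A)$ when some $\alpha_i\le 3$. It only reuses the integration by parts $\int\langle(-L_A)v,v\rangle\,d\mu_A=\mathcal A$ already needed for \eqref{BLstab:helper8}. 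The paper's route, in exchange, shows that gradient stability is a formal consequence of any $L^2$ stability estimate at a specific optimizer, which is a reusable principle (for instance it transfers verbatim to Theorem~\ref{thm:BL stability homogeneous}).

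Your closing remark is wrong, though it does not affect the proof. The AM--GM step with weight $\kappa_A$, combined with $\kappa_A\mathcal D\le\mathcal A$, yields $2\mathcal A\le\kappa_A^{-1}\mathcal C+\mathcal A$, which is literally the inequality $\mathcal C\ge\kappa_A\mathcal A$ of \eqref{BLstab:ACbound}. Applying \eqref{BLstab:ACbound} to one of the two copies of $\mathcal A$ and \eqref{BLstab:ADbound} to $\mathcal D$ gives $\mathcal C+2\mathcal A+\mathcal D\le(1+\kappa_A^{-1})(\mathcal A+\mathcal C)$ directly, with the same constant. This is precisely what the paper does, since its bound $\mathcal A+\mathcal D\le\kappa_A^{-1}(\mathcal A+\mathcal C)$ is \eqref{BLstab:specific a}. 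A factor is lost only if one applies \eqref{BLstab:ACbound} to both copies of $\mathcal A$.
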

\begin{proof}
    Let $g = f - \langle \nabla W, a\rangle$. Then
    $$\nabla g = \nabla f - Ba,$$
    and applying the deficit to $g$ we get
    \begin{align*}
        \delta_{BL}(g) &= \int_{\mr_*^N} \langle B^{-1}\nabla g, \nabla g\rangle d\mu_A - \var(g) \\
        &= \int_{\mr_*^N} \langle B^{-1}\nabla g, \nabla g\rangle d\mu_A - \int_{\mr_*^N} |g|^2 d\mu_A.
    \end{align*}
    Note that $g$ may fail to be in $W^{1,2}_A(\mr_*^N,\mu_A)$ when some $\alpha_i\leq3$, but $\delta_{BL}(g)$ is finite since $\langle B^{-1}Ba,Ba\rangle=\sum_ia_i^2\left(1+\frac{\alpha_i}{x_i^2}\right)$ is integrable for $\alpha_i>1$. Here $\int_{\mr_*^N}g\,d\mu_A=0$, since $\int_{\mr_*^N}\p_iW\,d\mu_A=0$ for every $i$ (by integration by parts, where the boundary terms vanish because $\alpha_i>1$). Moreover $\delta_{BL}$ is a nonnegative quadratic form which vanishes at $\langle\nabla W,a\rangle$ (the equality case of \eqref{ineq:Brascamp Lieb}). By the Cauchy--Schwarz inequality for the associated bilinear form, $\langle\nabla W,a\rangle$ lies in its kernel. We therefore get $\delta_{BL}(f) = \delta_{BL}(g)$, so
    \begin{align}
        \int_{\mr_*^N} \langle B^{-1}\nabla g, \nabla g\rangle d\mu_A &= \delta_{BL}(f) + \int_{\mr_*^N} |g|^2 d\mu_A \notag \\
        \text{by \eqref{BLstab:specific a}} \qquad &\leq \delta_{BL}(f) + \frac{1}{\kappa_A}\delta_{BL}(f) \notag \\
        &= \frac{\kappa_A + 1}{\kappa_A} \delta_{BL}(f).
    \end{align}
    Note that \eqref{eq:BL Stability}, which has the infimum over $c$, is not enough here. We need the bound $\int |g|^2d\mu_A\leq\frac{1}{\kappa_A}\delta_{BL}(f)$ for the specific vector $a$, and this is \eqref{BLstab:specific a}. This gives \eqref{eq:BL gradient stability} with $C_A=\frac{\kappa_A}{1+\kappa_A}$.
\end{proof}
\subsection{Homogeneous case}
In this subsection we extend the proof to homogeneous Gaussian measures under an additional subharmonicity condition. We begin with a technical lemma.

\begin{lemma}[Weighted IBP identity]\label{lemma:IBPw identity}
    Let $w$ be homogeneous of degree $\alpha>0$ and log-concave on the open convex cone
    $\Sigma=\{w>0\}$, with $w\in C(\overline\Sigma)\cap C^2(\Sigma)$ and
    $w=0$ on $\p\Sigma$. Let $\psi=\log w$, and assume
    \begin{equation}
        -\Delta\psi\leq\theta|\nabla\psi|^2\ \text{ on }\Sigma
        \label{eq:IBPw hypothesis}
    \end{equation}
    for some $\theta\in(0,1)$. Then for
    $v\in W^{1,2}_{loc}(\Sigma,\mr^N)\cap L^2(\mu_w)$ with
    $\mathcal A:=\int_\Sigma\|\nabla v\|^2\,d\mu_w<\infty$, the quantity
    $H:=\int_\Sigma|\nabla\psi|^2|v|^2\,d\mu_w$ is finite and
    \begin{equation}
        H =\int_\Sigma(-\Delta\psi)|v|^2\,d\mu_w -\int_\Sigma\langle\nabla\psi,\nabla|v|^2\rangle\,d\mu_w +\alpha\int_\Sigma|v|^2\,d\mu_w.
        \label{eq:IBPw identity}
    \end{equation}
    In particular, $\nabla\psi\in L^2(\mu_w)$ and $\int_\Sigma|\nabla\psi|^2d\mu_w\leq\frac{\alpha}{1-\theta}$.
\end{lemma}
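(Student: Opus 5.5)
The identity comes from integrating by parts the homogeneity relation. Since $w$ is homogeneous of degree $\alpha$, Euler's identity gives $\langle\nabla\psi,x\rangle=\alpha$ on $\Sigma$. For the density $e^{\psi}e^{-|x|^2/2}$ we have
\begin{equation*}
\operatorname{div}\bigl(\nabla\psi\, w e^{-\frac{|x|^2}{2}}\bigr)=\bigl(\Delta\psi+|\nabla\psi|^2-\langle\nabla\psi,x\rangle\bigr)w e^{-\frac{|x|^2}{2}}=\bigl(\Delta\psi+|\nabla\psi|^2-\alpha\bigr)w e^{-\frac{|x|^2}{2}}.
\end{equation*}
Testing this against $|v|^2$ and integrating by parts formally gives
\begin{equation*}
\int_\Sigma(\Delta\psi+|\nabla\psi|^2-\alpha)|v|^2\,d\mu_w=-\int_\Sigma\langle\nabla\psi,\nabla|v|^2\rangle\,d\mu_w ,
\end{equation*}
which is exactly \eqref{eq:IBPw identity}. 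So the plan is: first prove this with a cutoff $\eta_{\varepsilon,R}|v|^2$ in place of $|v|^2$, where all terms are finite. Then use \eqref{eq:IBPw hypothesis} to get an a priori bound on the truncated version of $H$. Finally pass to the limit.

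\textbf{Truncated identity.} Take $\eta=\eta_R(x)\,\chi_\varepsilon(x)$, where $\eta_R$ cuts off at $|x|\sim R$ and $\chi_\varepsilon$ vanishes near $\p\Sigma$. A convenient choice is a function of $\operatorname{dist}(x,\p\Sigma)$, or better of $w$ itself, say $\chi_\varepsilon=\phi(w/\varepsilon)$, since $w=0$ on $\p\Sigma$. With $\eta|v|^2$ compactly supported in $\Sigma$, the identity holds with an extra term $-\int\langle\nabla\psi,\nabla\eta\rangle|v|^2\,d\mu_w$. Set $H_\eta=\int\eta|\nabla\psi|^2|v|^2\,d\mu_w$. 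Using $-\Delta\psi\le\theta|\nabla\psi|^2$ and Cauchy--Schwarz together with $|\nabla|v|^2|\le2|v|\,\|\nabla v\|$, we get
\begin{equation*}
H_\eta\le\theta H_\eta+2H_\eta^{1/2}\mathcal A^{1/2}+\alpha\|v\|_2^2+E_\eta,
\end{equation*}
where $E_\eta$ is the cutoff error. Solving this quadratic inequality, as in \eqref{BLstab:helper4}, gives $H_\eta\le C(\theta,\alpha,\mathcal A,\|v\|_2)$ uniformly, provided $E_\eta$ is controlled. Monotone convergence then gives $H<\infty$. Once $H<\infty$, every term of the identity is absolutely integrable ($\Delta\psi\le0$ by log-concavity, and $(-\Delta\psi)|v|^2\le\theta|\nabla\psi|^2|v|^2$). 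Dominated convergence then removes the cutoff, and \eqref{eq:IBPw identity} follows. For the final claim, take $v\equiv1$, so $\mathcal A=0$. The identity plus the hypothesis give $H\le\theta H+\alpha$, hence $\int_\Sigma|\nabla\psi|^2\,d\mu_w\le\frac{\alpha}{1-\theta}$.

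\textbf{Main obstacle: the cutoff error near $\p\Sigma$.} The error $E_\eta=-\int\langle\nabla\psi,\nabla\eta\rangle|v|^2\,d\mu_w$ is harmless at infinity thanks to the Gaussian factor. Near the boundary, $\nabla\psi$ blows up, so it needs care. I would first try $\chi_\varepsilon=\phi(w/\varepsilon)$ with $\phi$ nondecreasing, $\phi=0$ on $[0,1]$ and $\phi=1$ on $[2,\infty)$. Then
\begin{equation*}
\langle\nabla\psi,\nabla\chi_\varepsilon\rangle=\phi'(w/\varepsilon)\,\frac{w}{\varepsilon}\,|\nabla\psi|^2\ge0 ,
\end{equation*}
so this boundary contribution enters $E_\eta$ with a favorable sign, i.e.\ $E_\eta\le$ (the $R$-error only), and the a priori bound closes without any estimate on $v$ near $\p\Sigma$.

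The price is that removing the cutoff in the identity itself needs this nonnegative term to tend to zero. It is bounded by $2\int_{\{\varepsilon<w<2\varepsilon\}}|\nabla\psi|^2|v|^2\,d\mu_w$, which tends to $0$ by dominated convergence once $H<\infty$ is known. The remaining technical points are that $\chi_\varepsilon$ is only as regular as $w\in C^2$, which is fine, and that $v\in W^{1,2}_{loc}$ suffices for the product rule on compact subsets of $\Sigma$.
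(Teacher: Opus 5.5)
Your proposal is correct and follows essentially the same route as the paper. Both arguments use a cutoff that is a function of $w$, so that $\langle\nabla\psi,\nabla\chi\rangle\ge 0$ enters with a favorable sign, together with a radial cutoff at scale $R$, a uniform bound on the truncated $H$ from the quadratic inequality, monotone and then dominated convergence, and a constant unit vector $v$ for the final claim. The only variation is the boundary cutoff: the paper uses the logarithmic cutoff $\eta(-\log w/m)$, which gives the explicit bound $E_m\le\frac{\|\eta'\|_\infty}{m}H$, whereas your $\phi(w/\varepsilon)$ relies on dominated convergence on the shrinking layer $\{\varepsilon\le w\le2\varepsilon\}$ (and the constant there is $2\|\phi'\|_\infty$, not $2$).

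One correction: the error from the radial cutoff is not controlled by the Gaussian factor. As in the paper, it follows from Cauchy--Schwarz with a squared cutoff $\zeta_R^2$, giving $\frac{4}{R}H^{1/2}J^{1/2}$. Even more simply, Euler's identity gives $\langle\nabla\zeta_R,\nabla\psi\rangle=\zeta'(|x|/R)\frac{\alpha}{R|x|}=O(R^{-2})$.
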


\begin{proof}
   We prove the identity by using a smooth cutoff function near the boundary, and applying the divergence theorem. Fix a smooth nonincreasing $\eta:\mr\to[0,1]$ with $\eta=1$ on
    $(-\infty,1]$ and $\eta=0$ on $[2,\infty)$, and a smooth
    $\zeta:[0,\infty)\to[0,1]$ with $\zeta=1$ on $[0,1]$, $\zeta$ nonincreasing, $\zeta=0$ on
    $[2,\infty)$ and $|\zeta'|\leq2$. For $m,R\geq1$ set
    \begin{equation*}
        \chi_m=\eta\!\left(\frac{-\log w}{m}\right),
        \qquad
        \zeta_R(x)=\zeta\!\left(\frac{|x|}{R}\right).
    \end{equation*}
    Then $\chi_m\zeta_R^2$ has compact support
    in $\Sigma$. Because $\eta$ is nonincreasing, the chain rule gives
    \begin{equation}
        \nabla\chi_m=-\frac1m\eta'\!\left(\frac{-\log w}{m}\right)\nabla\psi,
        \qquad
        \langle\nabla\chi_m,\nabla\psi\rangle
        =-\frac1m\eta'\!\left(\frac{-\log w}{m}\right)|\nabla\psi|^2\geq0.
        \label{eq:IBPw sign}
    \end{equation}

    Consider the vector field
    \begin{equation*}
        F_{m,R}=\chi_m\zeta_R^2\,|v|^2\,w\,e^{-\frac{|x|^2}{2}}\nabla\psi.
    \end{equation*}
    Since $F_{m,R}\in W^{1,1}(\Sigma)$ has compact support in $\Sigma$, the divergence theorem applies. Thus
    \begin{equation}
        \int_\Sigma\text{div}F_{m,R}\,dx=0.
        \label{eq:IBPw divergence}
    \end{equation}
    We expand the divergence.
    \begin{align*}
        \operatorname{div}F_{m,R}
        &=we^{-\frac{|x|^2}{2}}\Big(\zeta_R^2|v|^2\langle\nabla\chi_m,\nabla\psi\rangle+2\chi_m\zeta_R|v|^2\langle\nabla\zeta_R,\nabla\psi\rangle+\chi_m\zeta_R^2\langle\nabla|v|^2,\nabla\psi\rangle\Big)\\
        &\quad+we^{-\frac{|x|^2}{2}}\chi_m\zeta_R^2|v|^2\Big(\langle\nabla\psi-x,\nabla\psi\rangle+\Delta\psi\Big)\\
        &=we^{-\frac{|x|^2}{2}}\Big(\zeta_R^2|v|^2\langle\nabla\chi_m,\nabla\psi\rangle+2\chi_m\zeta_R|v|^2\langle\nabla\zeta_R,\nabla\psi\rangle+\chi_m\zeta_R^2\langle\nabla|v|^2,\nabla\psi\rangle\Big)\\
        &\quad+we^{-\frac{|x|^2}{2}}\chi_m\zeta_R^2|v|^2\Big(|\nabla\psi|^2-\alpha+\Delta\psi\Big).
    \end{align*}
    Here we used $\nabla\left(we^{-\frac{|x|^2}{2}}\right)=we^{-\frac{|x|^2}{2}}(\nabla\psi-x)$ and, in the last equality, Euler's identity $x\cdot\nabla\psi=\alpha$ for homogeneous functions. Integrating against $Z_w^{-1}dx$, we obtain
    \begin{equation}
        H_{m,R}=T_{m,R}-M_{m,R}+\alpha V_{m,R}-E_{m,R}-Q_{m,R},
        \label{eq:IBPw with cutoffs}
    \end{equation}
    where
    \begin{align*}
        H_{m,R} &= \int_\Sigma \chi_m\zeta_R^2|\nabla\psi|^2|v|^2d\mu_w \\
        T_{m,R} &= \int_\Sigma \chi_m\zeta_R^2(-\Delta\psi)|v|^2d\mu_w \\
        V_{m,R} &= \int_\Sigma \chi_m\zeta_R^2|v|^2d\mu_w\\
        M_{m,R} &= \int_\Sigma \chi_m\zeta_R^2\langle\nabla\psi,\nabla|v|^2\rangle
        d\mu_w \\
        E_{m,R} &= \int_\Sigma \zeta_R^2|v|^2\langle\nabla\chi_m,\nabla\psi\rangle
        d\mu_w\\
        Q_{m,R} &= 2\int_\Sigma \chi_m\zeta_R|v|^2\langle\nabla\zeta_R,\nabla\psi\rangle
        d\mu_w.
    \end{align*}

    On the compact support of $\chi_m\zeta_R^2$ the continuous function
    $|\nabla\psi|$ is bounded, so $$H_{m,R}\leq(\sup|\nabla\psi|)^2 \int_\Sigma |v|^2 d\mu_w<\infty.$$ Every other term in \eqref{eq:IBPw with cutoffs} is finite as well, and we now estimate each term.
    \begin{equation}
        T_{m,R}\leq\theta\int_\Sigma \chi_m\zeta_R^2|\nabla\psi|^2|v|^2d\mu_w
        =\theta H_{m,R}.
        \label{eq:IBPw T}
    \end{equation}
    Since $\nabla|v|^2=2\sum_kv_k\nabla v_k$, the Cauchy--Schwarz inequality
    gives
    \begin{align*}\left|\sum_kv_k \langle\nabla v_k, \nabla \psi \rangle\right| &\leq \left(\sum_k |v_k|^2\right)^{\frac{1}{2}} \left(\sum_k |\langle\nabla v_k, \nabla \psi \rangle|^2\right)^{\frac{1}{2}}\\ &\leq |v| \left(| \nabla\psi|^2 \underbrace{\sum_k |\nabla v_k|^2 }_{=\|\nabla v\|^2}\right)^{\frac{1}{2}} =|v| | \nabla\psi|  \|\nabla v\|. \end{align*}
   Then we have
    \begin{equation*}
        |M_{m,R}|
        =\left|\int_\Sigma\chi_m\zeta_R^2\langle\nabla\psi,\nabla|v|^2\rangle
        d\mu_w\right|
        \leq2\int_\Sigma\chi_m\zeta_R^2\,|\nabla\psi||v|\,\|\nabla v\|
        d\mu_w.
    \end{equation*}
    We write the integrand as
    \begin{equation*}
        \chi_m\zeta_R^2\,|\nabla\psi||v|\,\|\nabla v\|
        =\left(\chi_m\zeta_R\,|\nabla\psi||v|\right)
        \left(\zeta_R\,\|\nabla v\|\right)
    \end{equation*}
    and apply the Cauchy--Schwarz inequality again
    \begin{equation*}
        |M_{m,R}|
        \leq2\left(\int_\Sigma\chi_m^2\zeta_R^2|\nabla\psi|^2|v|^2d\mu_w\right)
        ^{1/2}
        \left(\int_\Sigma\zeta_R^2\|\nabla v\|^2d\mu_w\right)^{1/2}.
    \end{equation*}
    Since $0\leq\chi_m\leq1$ we have $\chi_m^2\leq\chi_m$ and since $\zeta_R^2\leq1$, we have
    \begin{equation}
        |M_{m,R}|
        \leq2\left(\int_\Sigma\chi_m\zeta_R^2|\nabla\psi|^2|v|^2d\mu_w\right)
        ^{1/2}
        \left(\int_\Sigma\zeta_R^2\|\nabla v\|^2d\mu_w\right)^{1/2}
        \leq2H_{m,R}^{1/2}\mathcal A^{1/2}.
        \label{eq:IBPw M}
    \end{equation}
    Since $|\nabla\zeta_R|\leq2/R$ we have $|\langle\nabla\zeta_R,\nabla\psi\rangle|\leq|\nabla\zeta_R||\nabla\psi|
    \leq\frac{2}{R}|\nabla\psi|$. Then
    \begin{equation*}
        |Q_{m,R}|
        =\left|2\int_\Sigma\chi_m\zeta_R|v|^2\langle\nabla\zeta_R,\nabla\psi
        \rangle d\mu_w\right|
        \leq\frac{4}{R}\int_\Sigma\chi_m\zeta_R\,|v|^2\,|\nabla\psi|\,d\mu_w.
    \end{equation*}
    We apply the Cauchy--Schwarz inequality to $\left(\chi_m^{1/2}\zeta_R\,|\nabla\psi||v|\right)
        \left(\chi_m^{1/2}|v|\right)$ to get
    \begin{align}
        |Q_{m,R}| &\leq\frac{4}{R}
        \left(\int_\Sigma\chi_m\zeta_R^2|\nabla\psi|^2|v|^2d\mu_w\right)^{1/2}
        \left(\int_\Sigma\chi_m|v|^2d\mu_w\right)^{1/2} \notag\\
        &\leq \frac{4}{R}
        \left(\int_\Sigma \chi_m\zeta_R^2|\nabla\psi|^2|v|^2d\mu_w\right)^{1/2}
        \left(\int_\Sigma|v|^2d\mu_w\right)^{1/2} \notag\\
        &= \frac{4}{R}H_{m,R}^{1/2} J^{1/2}, \label{eq:IBPw Q}
    \end{align}
    where $J = \int_\Sigma |v|^2 d\mu_w$.
    \begin{equation}
        V_{m,R} = \int_\Sigma \chi_m\zeta_R^2|v|^2d\mu_w \leq \int_\Sigma |v|^2d\mu_w = J. \label{eq:IBPw V}
    \end{equation}
    And
    \begin{equation}
        E_{m,R} = \int_\Sigma \zeta_R^2|v|^2\langle\nabla\chi_m,\nabla\psi\rangle
        d\mu_w \geq 0 \qquad \text{by \eqref{eq:IBPw sign}} \label{eq:IBPw E}
    \end{equation}
    Using these bounds in \eqref{eq:IBPw with cutoffs} we get
    \begin{equation}
        H_{m,R} \leq \theta H_{m,R} + 2 H_{m,R}^{1/2} \mathcal A^{1/2} + \alpha J + \frac{4}{R}H_{m,R}^{1/2} J^{1/2}. \notag
    \end{equation}
    Rewriting we get
    \begin{equation}
        (1-\theta)H_{m,R}
        \leq\left(2\mathcal A^{1/2}+\frac{4}{R}J^{1/2}\right)H_{m,R}^{1/2}+\alpha J.
        \notag
    \end{equation}
    Let $y = H_{m,R}^{1/2}$. Then the inequality becomes
    \begin{equation}
        (1-\theta)y^2 - \left(2\mathcal A^{1/2}+\frac{4}{R}J^{1/2}\right)y - \alpha J
        \leq0. \label{eq:IBPw quadratic}
    \end{equation}
    Because $y$ is finite and $\theta < 1$, \eqref{eq:IBPw quadratic} is a quadratic
    inequality in $y$ with positive leading coefficient. Solving we get
    \begin{align}
        y
        & \leq\frac{\left(2\mathcal A^{1/2}+\frac{4}{R}J^{1/2}\right)
        +\sqrt{\left(2\mathcal A^{1/2}+\frac{4}{R}J^{1/2}\right)^2+4(1-\theta)\alpha J}}
        {2(1-\theta)} \label{eq:IBPw root} \\
        & \leq\frac{\left(2\mathcal A^{1/2}+4J^{1/2}\right)
        +\sqrt{\left(2\mathcal A^{1/2}+4J^{1/2}\right)^2+4(1-\theta)\alpha J}}
        {2(1-\theta)}.
        \notag
    \end{align}
    This bound on $y$ is uniform for all $m,R \geq 1$. Thus
    \begin{equation}
        H_{m,R}\leq K_0, \label{eq:IBPw uniform}
    \end{equation}
    where $K_0$ is the square of the right-hand side of \eqref{eq:IBPw root} with $R=1$.

    Now let $R\to\infty$. By \eqref{eq:IBPw Q} and \eqref{eq:IBPw uniform},
    $|Q_{m,R}|\leq\frac{4}{R}K_0^{1/2}J^{1/2}\to0$. As $\zeta_R\uparrow1$,
    monotone convergence gives $H_{m,R}\to H_m$, $V_{m,R}\to V_m$ and
    $E_{m,R}\to E_m$, while dominated convergence gives $T_{m,R}\to T_m$ and
    $M_{m,R}\to M_m$, the same integrals without the factor $\zeta_R$. Here the
    dominating functions $\theta\chi_m|\nabla\psi|^2|v|^2$ (this is a dominating function because $0\leq-\Delta\psi\leq\theta|\nabla\psi|^2$, where $-\Delta\psi=\operatorname{tr}(-\nabla^2\psi)\geq0$ by log-concavity) and
    $2\chi_m|\nabla\psi||v|\|\nabla v\|$ are integrable by
    \eqref{eq:IBPw uniform} and $\mathcal A<\infty$. Thus
    \begin{equation}
        H_m=T_m-M_m+\alpha V_m-E_m,
        \qquad H_m\leq K_0. \label{eq:IBPw identity in m}
    \end{equation}

    Finally let $m\to\infty$. Since $\chi_m\uparrow1$, monotone convergence
    applied to $H_m$ gives $H=\lim_m H_m\leq K_0<\infty$, which gives us the
    finiteness of $H$. Knowing $H<\infty$, we have $V_m\to J$ by monotone convergence,
    while $T_m$ and $M_m$ converge to the first two integrals on the right of
    \eqref{eq:IBPw identity} by dominated convergence, now dominated using
    $H<\infty$. Since $\nabla\chi_m$ is supported in
    $\{m\leq-\log w\leq2m\}$,
    \begin{equation*}
        0\leq E_m
        \leq\frac{\|\eta'\|_\infty}{m}
        \int_{\{-\log w\geq m\}}|\nabla\psi|^2|v|^2d\mu_w
        \leq\frac{\|\eta'\|_\infty}{m}H
        \longrightarrow0.
    \end{equation*}
    Taking the limit in \eqref{eq:IBPw identity in m} gives \eqref{eq:IBPw identity}.

    For the last claim, let $v$ be a constant unit vector. Then $\mathcal A=0$ and $\int_\Sigma|v|^2d\mu_w=1$, so \eqref{eq:IBPw identity} and $-\Delta\psi\leq\theta|\nabla\psi|^2$ give $H\leq\theta H+\alpha$. Since $H<\infty$, we get $\int_\Sigma|\nabla\psi|^2d\mu_w=H\leq\frac{\alpha}{1-\theta}$.
\end{proof}

\begin{proof}[Proof of Theorem~\ref{thm:BL stability homogeneous}]
    First, the hypothesis $\Delta(w^\theta)\geq0$ is the same as $-\Delta\psi\leq\theta|\nabla\psi|^2$ on $\Sigma$, since
    \begin{equation*}
        \Delta\left(w^\theta\right)
        =\Delta\left(e^{\theta\psi}\right)
        =\theta e^{\theta\psi}\left(\Delta\psi+\theta|\nabla\psi|^2\right)
    \end{equation*}
    and $\theta e^{\theta\psi}>0$ on $\Sigma$. So Lemma~\ref{lemma:IBPw identity} applies.

    Since $\mu_w$ satisfies $P(\lambda_w)$ and $\int_\Sigma f\,d\mu_w=0$, there is a unique solution $u$ of the Poisson equation $-L_wu = f$ with $\int_\Sigma u\,d\mu_w=0$. As in the monomial case (the boundary terms vanish since $w=0$ on $\p\Sigma$) we have
    \begin{align}
        \int_\Sigma \nabla u \, d\mu_w = \int_\Sigma fx \, d\mu_w = a.
    \end{align}
    Next we define $h = u - a\cdot x$ and it follows that $$v = \nabla h = \nabla u - a,$$
    and also note that
    \begin{equation}
        \int_\Sigma v \, d\mu_w = \int_\Sigma \nabla u \, d\mu_w - a = 0.
    \end{equation}
    Exactly as in the monomial case, we have the dual form of the deficit
    \begin{equation}
        \delta_{BL}(f) = \int_\Sigma \|\nabla^2 u\|^2 \, d\mu_w + \int_\Sigma \langle B^{-1}(-L_w)\nabla u, (-L_w)\nabla u\rangle \, d\mu_w. \label{BLw:deficit}
    \end{equation}
    Since $(-L_w)(a) = 0$ and $\nabla v = \nabla^2 u$ we can rewrite \eqref{BLw:deficit} in terms of $v$:
    \begin{equation}
        \delta_{BL}(f) = \underbrace{\int_\Sigma \|\nabla v\|^2 \, d\mu_w}_{\mathcal A} + \underbrace{\int_\Sigma \langle B^{-1}(-L_w)v, (-L_w)v\rangle \, d\mu_w}_{\mathcal C}. \label{BLw:deficit in v}
    \end{equation}
    Also note that
    \begin{align}
        -L_wh &= -L_w(u - a\cdot x) \notag \\
        &= f + L_w(a\cdot x) \notag \\
        &= f + (\Delta - \nabla W\cdot\nabla)(a\cdot x) \notag \\
        &= f - \langle\nabla W, a\rangle. \label{BLw:Lh}
    \end{align}
    So we get
    \begin{align}
        \int_\Sigma |f - \langle\nabla W, a\rangle|^2 \, d\mu_w
        &= \int_\Sigma |L_wh|^2 \, d\mu_w \notag \\
        &= \int_\Sigma \Gamma_2(h) \, d\mu_w \notag \\
        &= \int_\Sigma \|\nabla^2 h\|^2 + \langle \nabla^2 W\nabla h, \nabla h\rangle \, d\mu_w \notag \\
        &= \underbrace{\int_\Sigma \|\nabla v\|^2 \, d\mu_w}_{\mathcal A} + \underbrace{\int_\Sigma \langle Bv, v\rangle \, d\mu_w}_{\mathcal D}, \label{BLw:distance to opt}
    \end{align}
    where $B = \nabla^2 W = \mathbb{I}-\nabla^2\psi$.

    By the definition of $\lambda_w$, the measure $\mu_w$ satisfies the Poincar\'e inequality $P(\lambda_w)$. Applying it to each component $v_i$ and using $\int_\Sigma v\,d\mu_w = 0$ we have
    \begin{equation}
        \int_\Sigma |v|^2 \, d\mu_w \leq \frac{1}{\lambda_w}\int_\Sigma \|\nabla v\|^2 \, d\mu_w = \frac{\mathcal A}{\lambda_w}. \label{BLw:poincare}
    \end{equation}
    By Lemma \ref{lemma:IBPw identity} the quantity $H = \int_\Sigma |\nabla\psi|^2|v|^2 \, d\mu_w$ is finite and
    \begin{equation}
        H = \int_\Sigma (-\Delta\psi)|v|^2 \, d\mu_w - \int_\Sigma \langle\nabla\psi, \nabla|v|^2\rangle \, d\mu_w + \alpha\int_\Sigma |v|^2 \, d\mu_w. \label{BLw:hardy}
    \end{equation}
    Bounding the first term of \eqref{BLw:hardy} by the hypothesis $-\Delta\psi \leq \theta|\nabla\psi|^2$, the second by Cauchy--Schwarz, and the third by \eqref{BLw:poincare}, we get
    \begin{equation}
        H \leq \theta H + 2H^{\frac{1}{2}}\mathcal A^{\frac{1}{2}} + \frac{\alpha}{\lambda_w}\mathcal A. \notag
    \end{equation}
    If $\mathcal A=0$ then $v=0$ and there is nothing to prove. Otherwise let $y = \left(\frac{H}{\mathcal A}\right)^{\frac{1}{2}}$. Then this can be rewritten as
    \begin{equation}
        (1-\theta)y^2 - 2y - \frac{\alpha}{\lambda_w} \leq 0. \label{BLw:quadratic}
    \end{equation}
    Solving \eqref{BLw:quadratic} we get
    \begin{equation}
        y \leq \frac{1 + \sqrt{1 + \frac{(1-\theta)\alpha}{\lambda_w}}}{1-\theta} =: Y_w. \label{BLw:ybound}
    \end{equation}
    Thus
    \begin{equation}
        \frac{H}{\mathcal A} = y^2 \leq Y_w^2, \qquad H \leq Y_w^2 \mathcal A. \label{BLw:Hbound}
    \end{equation}
    Since $-\nabla^2\psi \succeq 0$, we have $\langle -\nabla^2\psi\,v, v\rangle \leq \operatorname{tr}(-\nabla^2\psi)|v|^2 = (-\Delta\psi)|v|^2 \leq \theta|\nabla\psi|^2|v|^2$. Combining \eqref{BLw:poincare} and \eqref{BLw:Hbound} we get the estimate
    \begin{align}
        \underbrace{\int_\Sigma \langle Bv, v\rangle \, d\mu_w}_{\mathcal D}
        &= \int_\Sigma |v|^2 \, d\mu_w + \int_\Sigma \langle -\nabla^2\psi\,v, v\rangle \, d\mu_w \notag \\
        &\leq \frac{\mathcal A}{\lambda_w} + \theta H \notag \\
        &\leq \left(\frac{1}{\lambda_w} + \theta Y_w^2\right)\underbrace{\int_\Sigma \|\nabla v\|^2 \, d\mu_w}_{\mathcal A}. \label{BLw:ADbound}
    \end{align}
    Here $\kappa_w^* = \frac{1}{\lambda_w} + \theta Y_w^2=\frac{1}{\kappa_w}$. Next consider the term
    \begin{align}
        \mathcal A &= \int_\Sigma \|\nabla v\|^2 \, d\mu_w = \int_\Sigma \langle -L_wv, v\rangle \, d\mu_w \notag \\
        &= \int_\Sigma \langle B^{\frac{1}{2}}B^{-\frac{1}{2}}(-L_wv), v\rangle \, d\mu_w \notag \\
        &= \int_\Sigma \langle B^{-\frac{1}{2}}(-L_wv), B^{\frac{1}{2}}v\rangle \, d\mu_w \notag \\
        \text{by C-S}\quad &\leq \left(\int_\Sigma \langle B^{-\frac{1}{2}}(-L_wv), B^{-\frac{1}{2}}(-L_wv)\rangle \, d\mu_w\right)^{\frac{1}{2}}\left(\int_\Sigma \langle B^{\frac{1}{2}}v, B^{\frac{1}{2}}v\rangle \, d\mu_w\right)^{\frac{1}{2}} \notag \\
        &= \left(\int_\Sigma \langle B^{-1}(-L_wv), (-L_wv)\rangle \, d\mu_w\right)^{\frac{1}{2}}\left(\int_\Sigma \langle Bv, v\rangle \, d\mu_w\right)^{\frac{1}{2}} \notag \\
        &= \mathcal C^{\frac{1}{2}}\mathcal D^{\frac{1}{2}}. \label{BLw:helper8}
    \end{align}
    In other words, \eqref{BLw:helper8} gives us the bound
    \begin{equation}
        \mathcal A^2 \leq \mathcal C\mathcal D \iff \mathcal C \geq \frac{\mathcal A^2}{\mathcal D} \implies \mathcal C \geq \frac{1}{\kappa_w^*}\mathcal A \quad \text{by \eqref{BLw:ADbound}} \label{BLw:ACbound}
    \end{equation}
    Putting everything together we have
    \begin{align}
        \delta_{BL}(f) &= \mathcal A + \mathcal C \qquad \text{by \eqref{BLw:deficit in v}} \notag \\
        &\geq \mathcal A + \frac{1}{\kappa_w^*}\mathcal A \qquad \text{by \eqref{BLw:ACbound}} \notag \\
        &\geq \frac{1}{\kappa_w^*}\mathcal D + \frac{1}{\kappa_w^*}\mathcal A \qquad \text{by \eqref{BLw:ADbound}} \notag \\
        &= \frac{1}{\kappa_w^*}\int_\Sigma |f - \langle\nabla W, a\rangle|^2 \, d\mu_w \qquad \text{by \eqref{BLw:distance to opt}} \notag \\
        &\geq \frac{1}{\kappa_w^*}\inf_{c\in\mr^N} \int_\Sigma |f - \langle\nabla W, c\rangle|^2 \, d\mu_w. \label{BLw:stability}
    \end{align}
    This gives the $L^2$ stability.
\end{proof}

We end this section by checking the hypotheses of Theorem~\ref{thm:BL stability homogeneous} for the monomial weight.
\begin{lemma}\label{lem:monomial hypotheses}
    Let $w=x^A$ with $\alpha_i>0$ for all $i$. Then $w$ is log-concave, and the following are equivalent: (i) $-\Delta\log w\leq\theta|\nabla\log w|^2$ for some $\theta\in(0,1)$, (ii) $\nabla\log w\in L^2(\mu_A)$, (iii) $\alpha_i>1$ for all $i$. In this case the best constant in (i) is $\theta=\max_i\frac1{\alpha_i}$.
\end{lemma}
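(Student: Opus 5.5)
Everything reduces to explicit formulas for $\psi=\log w=\sum_i\alpha_i\log x_i$. We have
\[
\nabla\psi=\left(\tfrac{\alpha_i}{x_i}\right)_i,\qquad
\nabla^2\psi=-\operatorname{diag}\left(\tfrac{\alpha_i}{x_i^2}\right)\preceq0 .
\]
This gives log-concavity at once. It also gives
\[
-\Delta\psi=\sum_i\frac{\alpha_i}{x_i^2},\qquad |\nabla\psi|^2=\sum_i\frac{\alpha_i^2}{x_i^2}.
\]

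\textbf{Step 1: (i)$\iff$(iii) and the best $\theta$.} Condition (i) reads
\[
\sum_i\frac{\alpha_i}{x_i^2}\le\theta\sum_i\frac{\alpha_i^2}{x_i^2}\qquad\text{on }\mr_*^N .
\]
For fixed $j$, send $x_j\to0$ with the other coordinates fixed. The $j$-th terms dominate, which forces $\alpha_j\le\theta\alpha_j^2$, i.e. $\theta\ge1/\alpha_j$.

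Conversely, if $\theta\ge\max_i1/\alpha_i$, the inequality holds termwise, since $\alpha_i\le\theta\alpha_i^2$ for every $i$. So (i) holds for some $\theta$ if and only if $\max_i1/\alpha_i\le\theta$ for some $\theta<1$. This is equivalent to $\alpha_i>1$ for all $i$, and the best constant is $\theta=\max_i1/\alpha_i$.

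\textbf{Step 2: (ii)$\iff$(iii).} Since $\mu_A$ is a product measure, Tonelli gives
\[
\int|\nabla\psi|^2d\mu_A=\sum_i\alpha_i^2\int_0^\infty x_i^{-2}\,d\mu_{\alpha_i}.
\]
The $i$-th integral is a multiple of $\int_0^\infty x^{\alpha_i-2}e^{-x^2/2}\,dx$. Integrability at infinity is automatic. Near $0$ the integral is finite exactly when $\alpha_i-2>-1$, that is, when $\alpha_i>1$. All coefficients $\alpha_i^2$ are positive, so no cancellation can occur, and the sum is finite if and only if every $\alpha_i>1$.

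As a consistency check, for $\alpha>1$ one has $\Gamma\left(\frac{\alpha-1}{2}\right)/\Gamma\left(\frac{\alpha+1}{2}\right)=\frac{2}{\alpha-1}$. Hence $\int x^{-2}\,d\mu_\alpha=\frac1{\alpha-1}$, which matches the bound $\frac{\alpha}{1-\theta}$ of Lemma~\ref{lemma:IBPw identity} in dimension one.

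\textbf{Remaining hypotheses and main obstacle.} For use in Theorem~\ref{thm:BL stability homogeneous}, I would also note that $w$ is homogeneous of degree $\alpha=\sum\alpha_i$. Moreover, $w\in C(\overline{\mr_*^N})$ and $w$ vanishes on $\p\mr_*^N$ because every $\alpha_i>0$.

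The only delicate point is the sharpness in Step 1. The limit $x_j\to0$ must isolate a single coordinate, which it does because the remaining terms stay bounded. Everything else is routine.
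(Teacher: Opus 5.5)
Your proof is correct and follows essentially the same route as the paper. You compute $\nabla\psi$ and $\nabla^2\psi$ explicitly, get (i)$\iff$(iii) and the sharp constant $\theta=\max_i 1/\alpha_i$ from a termwise comparison plus the limit $x_j\to0$, and use the product structure to reduce (ii) to finiteness of $\int_0^\infty x^{\alpha_i-2}e^{-x^2/2}\,dx$. Your extra check that $\int x^{-2}\,d\mu_\alpha=\frac{1}{\alpha-1}$ is right, and it shows the bound $\frac{\alpha}{1-\theta}$ of Lemma~\ref{lemma:IBPw identity} is attained with equality in dimension one.
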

\begin{proof}
    Write $\psi=\log w=\sum_i\alpha_i\log x_i$. Then
    \begin{equation}
        \p_i\psi=\frac{\alpha_i}{x_i},
        \qquad
        \p_i^2\psi=-\frac{\alpha_i}{x_i^2},
        \qquad
        \text{so}\qquad
        |\nabla\psi|^2=\sum_i\frac{\alpha_i^2}{x_i^2},
        \qquad
        -\Delta\psi=\sum_i\frac{\alpha_i}{x_i^2}.
        \label{eq:monomial derivatives}
    \end{equation}
    In particular $-\nabla^2\psi=\operatorname{diag}(\alpha_i/x_i^2)
    \succeq0$, so $w$ is log-concave for all $\alpha_i>0$.

    Comparing the two sums in
    \eqref{eq:monomial derivatives} termwise, $-\Delta\psi\leq\theta|\nabla\psi|^2$
    holds if $\alpha_i\leq\theta\alpha_i^2$ for every $i$, that is,
    $\theta\geq1/\alpha_i$. Conversely, letting $x_j\to0$ for one index $j$ while the other coordinates are fixed shows that $\theta\geq1/\alpha_j$ is necessary. The smallest constant is therefore
    \begin{equation}
        \theta=\max_i\frac{1}{\alpha_i}
        \label{eq:monomial theta}
    \end{equation}
    and the requirement $\theta<1$ is precisely $\alpha_i>1$ for all $i$.

    For the integrability, since $\mu_A$ is a product measure,
    \begin{equation}
        \int_{\mr_*^N}|\nabla\psi|^2d\mu_A
        =\sum_i\alpha_i^2\int_{\mr_*^N}\frac{1}{x_i^2}d\mu_A,
        \qquad
        \int_0^\infty\frac{1}{x_i^2}\,x_i^{\alpha_i}e^{-x_i^2/2}dx_i
        =\int_0^\infty x_i^{\alpha_i-2}e^{-x_i^2/2}dx_i.
        \label{eq:monomial FI}
    \end{equation}
    The one-dimensional integral is finite if and only if
    $\alpha_i-2>-1$, that is $\alpha_i>1$. Hence $\nabla\psi\in L^2(\mu_A)$ is again equivalent to
    $\alpha_i>1$ for all $i$.
\end{proof}
\section{ICD for homogeneous measures}\label{sec:ICD for hom}
In this section $\Sigma\subset\mr^N$ is an open convex cone, $w\in C(\overline\Sigma)\cap C^2(\Sigma)$ is positive in $\Sigma$ and homogeneous of degree $\alpha\geq0$, and $\mu_w$, $L_w$ are as in the introduction. We define the effective dimension $D = N + \alpha$.
\begin{theorem}[Improved ICD] \label{thm: improved ICD}
    Assume that for some $k_w>0$
    \begin{equation}
        |x|^2(-\nabla^2 \log w) \geq k_w \mathbb{I} \quad\text{on }\Sigma. \label{codition: scale invariant hessian lower bound}
    \end{equation}
    Then $\mu_w$ satisfies the integrated curvature-dimension condition $ICD(1+s_w, \infty)$ for all $u\in C^2(\overline\Sigma)$ whose derivatives of order at most $2$ grow at most polynomially, where
    \begin{equation}
        s_w=\frac{\sqrt{(D-2)^2+4k_w}-(D-2)}{2} > 0.
        \label{eq:sw definition}
    \end{equation}
\end{theorem}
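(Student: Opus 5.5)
The plan is to mimic the proof of Theorem~\ref{thm:ICD}, replacing the coordinatewise Hardy-type inequality of Lemma~\ref{lemma1} by a radial one adapted to the homogeneous weight. Write $\psi=\log w$ and $h=\nabla u$. By the Bochner formula, $\Gamma_2(u)=\|\nabla^2u\|^2+|\nabla u|^2+\langle -\nabla^2\psi\,\nabla u,\nabla u\rangle$. Hypothesis \eqref{codition: scale invariant hessian lower bound} then gives
\begin{equation*}
\int_\Sigma\Gamma_2(u)\,d\mu_w\geq\int_\Sigma\|\nabla h\|^2d\mu_w+k_w\int_\Sigma\frac{|h|^2}{|x|^2}d\mu_w+\int_\Sigma|h|^2d\mu_w .
\end{equation*}
It therefore suffices to prove the vector Hardy-type inequality
\begin{equation*}
\int_\Sigma\|\nabla h\|^2d\mu_w+k_w\int_\Sigma\frac{|h|^2}{|x|^2}d\mu_w\geq s_w\int_\Sigma|h|^2d\mu_w .
\end{equation*}
If $\int_\Sigma|h|^2|x|^{-2}d\mu_w=\infty$ there is nothing to prove, so we assume it is finite.

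For a parameter $\gamma>0$ we expand the square, as in Lemma~\ref{lemma1}:
\begin{equation*}
0\leq\int_\Sigma\Big\|\nabla h-\gamma\,h\otimes\frac{x}{|x|^2}\Big\|^2d\mu_w=\int_\Sigma\|\nabla h\|^2d\mu_w-\gamma\int_\Sigma\Big\langle\nabla|h|^2,\frac{x}{|x|^2}\Big\rangle d\mu_w+\gamma^2\int_\Sigma\frac{|h|^2}{|x|^2}d\mu_w .
\end{equation*}
We integrate the cross term by parts against $we^{-|x|^2/2}$. Two facts are used. First, $\operatorname{div}(x/|x|^2)=(N-2)/|x|^2$. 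Second, $\langle\nabla\psi-x,x\rangle=\alpha-|x|^2$, by Euler's identity $x\cdot\nabla\psi=\alpha$. Together they give
\begin{equation*}
\int_\Sigma\Big\langle\nabla|h|^2,\frac{x}{|x|^2}\Big\rangle d\mu_w=-(D-2)\int_\Sigma\frac{|h|^2}{|x|^2}d\mu_w+\int_\Sigma|h|^2d\mu_w ,
\end{equation*}
with $D=N+\alpha$. Substituting this yields
\begin{equation*}
\int_\Sigma\|\nabla h\|^2d\mu_w+\big(\gamma^2+(D-2)\gamma\big)\int_\Sigma\frac{|h|^2}{|x|^2}d\mu_w\geq\gamma\int_\Sigma|h|^2d\mu_w .
\end{equation*}
Now choose $\gamma$ as the positive root of $\gamma^2+(D-2)\gamma=k_w$. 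This root is exactly $s_w$ from \eqref{eq:sw definition}, and it is positive because $k_w>0$. With this choice the inequality above is precisely the claimed Hardy-type bound, and adding $\int|h|^2d\mu_w$ gives $ICD(1+s_w,\infty)$.

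The main obstacle is justifying the integration by parts. The boundary terms involve $|h|^2\,w\,e^{-|x|^2/2}\,\langle x/|x|^2,\nu\rangle$. On $\partial\Sigma$ this term actually vanishes pointwise: since $\Sigma$ is a cone, $x$ is tangent to $\partial\Sigma$, so $\langle x,\nu\rangle=0$ there. This also holds on the parts of the boundary where $w$ does not vanish. The remaining issues are the singularity at the vertex $0$ and the behaviour at infinity. At infinity, polynomial growth of $u$ together with the Gaussian factor handles everything. At the vertex, the plan is to run the argument on $\Sigma\cap\{\varepsilon<|x|<R\}$, using cutoffs $\zeta_R$ as in Lemma~\ref{lemma:IBPw identity} together with an inner cutoff at scale $\varepsilon$. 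The inner boundary contribution is $O\big(\varepsilon^{N-2+\alpha}\sup|h|^2\big)=O(\varepsilon^{D-2})$. This vanishes when $D>2$. When $D\leq2$, we would instead argue that the assumed finiteness of $\int|h|^2|x|^{-2}d\mu_w$ forces $|h|$ to be small near $0$ in an averaged sense. Concretely, we would pick a sequence $\varepsilon_j\to0$ along which the sphere averages $\varepsilon_j^{D-2}\int_{\Sigma\cap\{|x|=\varepsilon_j\}}|h|^2\,w\,d\sigma$, suitably normalized, tend to $0$. Such a sequence exists because otherwise the integral $\int_0 \varepsilon^{-1}\,d\varepsilon$ would diverge. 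Letting $j\to\infty$ then removes the inner boundary term.
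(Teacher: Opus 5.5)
Your proposal is correct and is essentially the paper's argument. The paper likewise reduces to the Hardy-type bound for $v=\nabla u$ and proves it by the ground-state square completion $|\partial_r v-\frac{s_w}{r}v|^2$ with $s_w(s_w+D-2)=k_w$. It differs only in first discarding the tangential part of $\nabla^2u$ and integrating by parts ray by ray in polar coordinates (Lemma~\ref{lemma:radial lemma}), instead of using the divergence theorem with the field $x/|x|^2$ on the cone. Your square $\|\nabla h-\gamma h\otimes x/|x|^2\|^2$ is exactly that radial square plus the discarded tangential term, and your $\liminf$ argument at the vertex when $D\leq2$ is a valid substitute for the paper's case distinction on whether $v_i(0)=0$.
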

As before, we first prove a lemma.
\begin{lemma}[Radial lemma]\label{lemma:radial lemma}
    Let $D>0$ and $k_w>0$, and let $s_w$ be given by \eqref{eq:sw definition}, so that
    \begin{equation}
        s_w(s_w+D-2)=k_w.
        \label{eq:sw relation}
    \end{equation}
    Let $g\in C^1(0,\infty)$ satisfy $|g(r)|^2r^{D-2}e^{-\frac{r^2}{2}}\to0$ as $r\to0$ and as $r\to\infty$. Then
    \begin{equation}
        \int_0^\infty
        \left(|g'(r)|^2+\frac{k_w}{r^2}|g(r)|^2\right)
        r^{D-1}e^{-\frac{r^2}{2}}dr
        \geq
        s_w\int_0^\infty
        |g(r)|^2r^{D-1}e^{-\frac{r^2}{2}}dr.
        \label{ineq:radial inequality}
    \end{equation}
    More precisely, we have the identity
    \begin{align}
        &\int_0^\infty
        \left(|g'(r)|^2+\frac{k_w}{r^2}|g(r)|^2\right)
        r^{D-1}e^{-\frac{r^2}{2}}dr
        -
        s_w\int_0^\infty
        |g(r)|^2r^{D-1}e^{-\frac{r^2}{2}}dr \notag\\
        &\qquad =
        \int_0^\infty
        r^{2s_w}
        \left|
        \left(\frac{g(r)}{r^{s_w}}\right)'
        \right|^2
        r^{D-1}e^{-\frac{r^2}{2}}dr.
        \label{eq:radial identity}
    \end{align}
    Equality in \eqref{ineq:radial inequality} holds if and only if
    \begin{equation*}
        g(r)=cr^{s_w}
    \end{equation*}
    for some $c\in\mathbb R$.
\end{lemma}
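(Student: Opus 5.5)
The plan is to mimic the proof of Lemma~\ref{lemma1}: expand a perfect square and integrate the cross term by parts against the radial weight $m(r)=r^{D-1}e^{-\frac{r^2}{2}}$. As in Lemma~\ref{lemma1}, we may assume both integrals on the left of \eqref{ineq:radial inequality} are finite (otherwise there is nothing to prove). Write $s=s_w$.

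For $r>0$,
\begin{equation*}
r^{s}\left(\frac{g}{r^{s}}\right)'=g'-\frac{s}{r}g .
\end{equation*}
Hence the right-hand side of \eqref{eq:radial identity} equals
\begin{equation*}
\int_0^\infty\left(|g'|^2-\frac{s}{r}(|g|^2)'+\frac{s^2}{r^2}|g|^2\right)m\,dr .
\end{equation*}

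The cross term is handled by integration by parts:
\begin{equation*}
-s\int_0^\infty (|g|^2)'\,r^{D-2}e^{-\frac{r^2}{2}}dr
= -s\Big[|g|^2r^{D-2}e^{-\frac{r^2}{2}}\Big]_0^\infty
+s\int_0^\infty |g|^2\Big((D-2)r^{D-3}-r^{D-1}\Big)e^{-\frac{r^2}{2}}dr .
\end{equation*}
The boundary term vanishes by the decay assumption on $g$. The remaining integral equals
\begin{equation*}
s(D-2)\int\frac{|g|^2}{r^2}\,m\,dr-s\int|g|^2\,m\,dr .
\end{equation*}

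Adding the pieces, the right-hand side of \eqref{eq:radial identity} becomes
\begin{equation*}
\int|g'|^2m+\big(s^2+s(D-2)\big)\int\frac{|g|^2}{r^2}m-s\int|g|^2m .
\end{equation*}
By \eqref{eq:sw relation}, $s^2+s(D-2)=k_w$, so this is exactly the left-hand side of \eqref{eq:radial identity}.

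To check \eqref{eq:sw relation}: $s$ is the positive root of $s^2+(D-2)s-k_w=0$. Since $k_w>0$, we have $\sqrt{(D-2)^2+4k_w}>|D-2|$, so $s>0$.

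Inequality \eqref{ineq:radial inequality} follows because the right-hand side of the identity is nonnegative. Equality forces $(g/r^{s})'=0$ on $(0,\infty)$, since the weight is positive there; thus $g=cr^{s}$. Conversely, $g=cr^{s}$ makes the right-hand side of \eqref{eq:radial identity} zero.

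The main obstacle is finiteness, so that the algebra with possibly infinite integrals is legitimate. I would integrate by parts on $[\varepsilon,M]$, where all quantities are finite, and let $\varepsilon\to0$ and $M\to\infty$. The cross term $\int\frac{gg'}{r}m$ is absolutely convergent by Cauchy--Schwarz from the two finite integrals. Hence $\int|g|^2m$ is also finite, as the limit of the identity, exactly as argued in Lemma~\ref{lemma1}.
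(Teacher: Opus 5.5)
Your proposal is correct and follows essentially the same route as the paper, which also sets $g=r^{s_w}h$, expands $r^{2s_w}|h'|^2=\left|g'-\frac{s_w}{r}g\right|^2$, integrates the cross term by parts against $r^{D-2}e^{-\frac{r^2}{2}}$ (killing the boundary term with the decay hypothesis), and closes with $s_w^2+s_w(D-2)=k_w$. Your truncation to $[\varepsilon,M]$ only makes explicit the finiteness argument that the paper delegates to Lemma~\ref{lemma1}.
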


\begin{proof}
    Let
    \begin{equation*}
        d\nu(r)=r^{D-1}e^{-\frac{r^2}{2}}dr.
    \end{equation*}
    Write $g(r)=r^{s_w}h(r)$. We have
    \begin{align*}
        g'(r)
        &=s_wr^{s_w-1}h(r)+r^{s_w}h'(r)\\
        h'(r)
        &=\frac{g'(r)-s_wr^{s_w-1}h(r)}{r^{s_w}}\\
        &=r^{-s_w}\left(g'(r)-\frac{s_w}{r}g(r)\right).
    \end{align*}
    Therefore,
    \begin{align*}
        r^{2s_w}|h'(r)|^2
        &=
        \left|g'(r)-\frac{s_w}{r}g(r)\right|^2\\
        &=
        |g'(r)|^2
        -
        \frac{2s_w}{r}g(r)g'(r)
        +
        \frac{s_w^2}{r^2}|g(r)|^2.
    \end{align*}
    Integrating with respect to $\nu$, we obtain
    \begin{align}
        \int_0^\infty r^{2s_w}|h'(r)|^2d\nu(r)
        &=
        \int_0^\infty |g'(r)|^2d\nu(r)
        -
        2s_w\int_0^\infty\frac{g(r)g'(r)}{r}d\nu(r) \notag\\
        &\quad+
        s_w^2\int_0^\infty\frac{|g(r)|^2}{r^2}d\nu(r).
        \label{eq:radial expansion}
    \end{align}
    We compute the middle term using
    \begin{equation*}
        2g(r)g'(r)=\left(|g(r)|^2\right)'.
    \end{equation*}
    Since
    \begin{equation*}
        \frac{1}{r}d\nu(r)
        =
        r^{D-2}e^{-\frac{r^2}{2}}dr,
    \end{equation*}
    integration by parts gives
    \begin{align*}
        -2s_w\int_0^\infty\frac{g(r)g'(r)}{r}d\nu(r)
        &=
        -s_w\int_0^\infty
        \left(|g(r)|^2\right)'
        r^{D-2}e^{-\frac{r^2}{2}}dr\\
        &=
        -s_w
        \left[
        |g(r)|^2r^{D-2}e^{-\frac{r^2}{2}}
        \right]_0^\infty\\
        &\quad+
        s_w\int_0^\infty
        |g(r)|^2
        \left(
        r^{D-2}e^{-\frac{r^2}{2}}
        \right)'dr.
    \end{align*}
    The boundary term vanishes by the assumption on $g$. (If $\int_0^\infty |g'|^2d\nu$ or $\int_0^\infty \frac{|g|^2}{r^2}d\nu$ is infinite, there is nothing to prove. Otherwise all integrals here are finite, as in Lemma~\ref{lemma1}.) Moreover,
    \begin{align*}
        \left(
        r^{D-2}e^{-\frac{r^2}{2}}
        \right)'
        &=
        (D-2)r^{D-3}e^{-\frac{r^2}{2}}
        -
        r^{D-1}e^{-\frac{r^2}{2}}\\
        &=
        \left(\frac{D-2}{r^2}-1\right)
        r^{D-1}e^{-\frac{r^2}{2}}.
    \end{align*}
    It follows that
    \begin{align}
        -2s_w\int_0^\infty\frac{g(r)g'(r)}{r}d\nu(r)
        &=
        s_w(D-2)
        \int_0^\infty\frac{|g(r)|^2}{r^2}d\nu(r)\notag\\
        &\quad-
        s_w\int_0^\infty |g(r)|^2d\nu(r).
        \label{eq:radial cross term}
    \end{align}
    Substituting \eqref{eq:radial cross term} into \eqref{eq:radial expansion}, we obtain
    \begin{align*}
        \int_0^\infty r^{2s_w}|h'(r)|^2d\nu(r)
        &=
        \int_0^\infty |g'(r)|^2d\nu(r)\\
        &\quad+
        \left(s_w^2+s_w(D-2)\right)
        \int_0^\infty\frac{|g(r)|^2}{r^2}d\nu(r)\\
        &\quad-
        s_w\int_0^\infty |g(r)|^2d\nu(r).
    \end{align*}
    Using \eqref{eq:sw relation}, we have
    \begin{equation*}
        s_w^2+s_w(D-2)=s_w(s_w+D-2)=k_w.
    \end{equation*}
    Therefore,
    \begin{align*}
        \int_0^\infty r^{2s_w}|h'(r)|^2d\nu(r)
        &=
        \int_0^\infty
        \left(
        |g'(r)|^2+\frac{k_w}{r^2}|g(r)|^2
        \right)d\nu(r)\\
        &\quad-
        s_w\int_0^\infty |g(r)|^2d\nu(r).
    \end{align*}
    Since
    \begin{equation*}
        h(r)=\frac{g(r)}{r^{s_w}},
    \end{equation*}
    this proves \eqref{eq:radial identity}. Since the right hand side of \eqref{eq:radial identity} is nonnegative, we obtain \eqref{ineq:radial inequality}.

    Equality holds if and only if
    \begin{equation*}
        \left(\frac{g(r)}{r^{s_w}}\right)'=0,
    \end{equation*}
    which is equivalent to
    \begin{equation*}
        g(r)=cr^{s_w}.
    \end{equation*}
\end{proof}
\begin{proof}[Proof of Theorem \ref{thm: improved ICD}]
    Let $v = \nabla u$. Then
    \begin{align}
        \|\nabla^2 u\|^2 &= \sum_{ij} |\p_{ij}u|^2 \notag\\
        &= \sum_{ij} |\p_jv_i|^2 \notag \\
        &= \sum_i |\nabla v_i|^2. \label{ICDhelper1}
    \end{align}
    Now let $x = r\sigma$ where $\sigma \in S^{N-1}$ and $r > 0$. Then
    \begin{align}
        |\p_r v_i(r\sigma)|^2 &= \left|\sum_j \p_j v_i(r\sigma) \cdot\sigma_j \right|^2 \notag \\
        &\leq \left(\sum_j \sigma_j^2 \right) \left(\sum_j |\p_jv_i|^2\right) \notag \\
        &= |\nabla v_i|^2.  \label{ICDhelper2}
    \end{align}
    Summing \eqref{ICDhelper2} over $i$ and using \eqref{ICDhelper1}, we get
    \begin{align}
        \| \nabla^2 u\|^2 &\geq \sum_i |\p_r v_i(r\sigma)|^2 \notag \\
        &= |\p_r  v|^2. \label{ICDhelper3}%
    \end{align}
    Now the assumption \eqref{codition: scale invariant hessian lower bound} together with \eqref{ICDhelper3} gives us
    \begin{equation}
        \|\nabla^2 u\|^2 -\nabla^2\log w(\nabla u, \nabla u) \geq |\p_r  v|^2 + \frac{k_w}{|x|^2}|v|^2. \label{ICDhelper4}%
    \end{equation}
     Now we pass to polar coordinates. Since $w$ is homogeneous of degree
    $\alpha$ we have $w(r\sigma)=r^\alpha w(\sigma)$, and since
    $dx=r^{N-1}drd\sigma$ it follows that
    \begin{align}
        d\mu_w
        &=\frac{1}{Z_w}w(r\sigma)e^{-\frac{r^2}{2}}r^{N-1}drd\sigma \notag\\
        &=\frac{1}{Z_w}w(\sigma)r^{N+\alpha-1}e^{-\frac{r^2}{2}}drd\sigma \notag\\
        &=\frac{1}{Z_w}w(\sigma)r^{D-1}e^{-\frac{r^2}{2}}drd\sigma,
        \label{eq:polar measure}
    \end{align}
    so that the radial factor is exactly the weight $r^{D-1}e^{-r^2/2}$
    appearing in Lemma \ref{lemma:radial lemma}.

    Integrating \eqref{ICDhelper4} against $\mu_w$ and using
    \eqref{eq:polar measure} we obtain
    \begin{align}
        &\int_\Sigma\|\nabla^2u\|^2d\mu_w
        -\int_\Sigma\nabla^2\log w(\nabla u,\nabla u)d\mu_w \notag\\
        &\qquad\geq
        \frac{1}{Z_w}\int_{\Sigma\cap S^{N-1}}w(\sigma)
        \int_0^\infty\left(|\p_rv(r\sigma)|^2
        +\frac{k_w}{r^2}|v(r\sigma)|^2\right)
        r^{D-1}e^{-\frac{r^2}{2}}drd\sigma.
        \label{ICDhelper5}
    \end{align}
    Fix $\sigma\in\Sigma\cap S^{N-1}$ and $i\in\{1,\dots,N\}$, and apply
    Lemma \ref{lemma:radial lemma} to $g(r)=v_i(r\sigma)$. The assumption of the lemma holds at infinity because the derivatives of $u$ grow at most polynomially. At $r=0$ it holds if $v_i(0)=0$ or $D>2$, and if $v_i(0)\neq0$ and $D\leq2$, then the left-hand side below is $+\infty$. Since
    $g'(r)=\p_rv_i(r\sigma)$ this gives
    \begin{align}
        &\int_0^\infty\left(|\p_rv_i(r\sigma)|^2
        +\frac{k_w}{r^2}|v_i(r\sigma)|^2\right)
        r^{D-1}e^{-\frac{r^2}{2}}dr \notag\\
        &\qquad\geq
        s_w\int_0^\infty|v_i(r\sigma)|^2r^{D-1}e^{-\frac{r^2}{2}}dr.
        \label{ICDhelper6}
    \end{align}
    Summing \eqref{ICDhelper6} over $i$ and using
    $|\p_rv|^2=\sum_i|\p_rv_i|^2$ and $|v|^2=\sum_i|v_i|^2$ we get
    \begin{align}
        &\int_0^\infty\left(|\p_rv(r\sigma)|^2
        +\frac{k_w}{r^2}|v(r\sigma)|^2\right)
        r^{D-1}e^{-\frac{r^2}{2}}dr \notag\\
        &\qquad\geq
        s_w\int_0^\infty|v(r\sigma)|^2r^{D-1}e^{-\frac{r^2}{2}}dr.
        \label{ICDhelper7}
    \end{align}
    Inserting \eqref{ICDhelper7} into \eqref{ICDhelper5} and integrating in
    $\sigma$ against $w(\sigma)d\sigma$, we obtain by \eqref{eq:polar measure}
    \begin{equation}
        \int_\Sigma\|\nabla^2u\|^2d\mu_w
        -\int_\Sigma\nabla^2\log w(\nabla u,\nabla u)d\mu_w
        \geq
        s_w\int_\Sigma|\nabla u|^2d\mu_w.
        \label{ICDhelper8}
    \end{equation}

    It remains to convert \eqref{ICDhelper8} into a curvature-dimension
    statement. Writing the potential as
    \begin{equation*}
        W=\frac{|x|^2}{2}-\log w,
    \end{equation*}
    we have
    \begin{equation*}
        \nabla^2W=\mathbb{I}-\nabla^2\log w,
    \end{equation*}
    so the Bochner identity gives the pointwise formula
    \begin{align}
        \Gamma_2(u)
        &=\|\nabla^2u\|^2+\nabla^2W(\nabla u,\nabla u) \notag\\
        &=|\nabla u|^2+\|\nabla^2u\|^2-\nabla^2\log w(\nabla u,\nabla u).
        \label{ICDhelper9}
    \end{align}
    Integrating \eqref{ICDhelper9} and applying \eqref{ICDhelper8},
    \begin{align*}
        \int_\Sigma\Gamma_2(u)d\mu_w
        &=\int_\Sigma|\nabla u|^2d\mu_w
        +\int_\Sigma\|\nabla^2u\|^2d\mu_w
        -\int_\Sigma\nabla^2\log w(\nabla u,\nabla u)d\mu_w\\
        &\geq\int_\Sigma|\nabla u|^2d\mu_w+s_w\int_\Sigma|\nabla u|^2d\mu_w\\
        &=(1+s_w)\int_\Sigma\Gamma(u)d\mu_w.
    \end{align*}
    Finally $k_w>0$ forces $\sqrt{(D-2)^2+4k_w}>|D-2|$, hence $s_w>0$ by
    \eqref{eq:sw definition}. Therefore $\mu_w$ satisfies
    $ICD(1+s_w,\infty)$ with $1+s_w > 1$.
\end{proof}
We note that Theorem~\ref{thm: improved ICD} never gives more than $ICD(2,\infty)$, and it gives less when $N\geq2$. Indeed, differentiating Euler's identity $x\cdot\nabla\log w=\alpha$ gives $\nabla^2\log w(x,x)=-\alpha$ (see \eqref{eq: radial hessian of log w}), so testing \eqref{codition: scale invariant hessian lower bound} in the direction $x$ gives $k_w\leq\alpha$. By \eqref{eq:sw definition}, $s_w\leq1$ if and only if $\sqrt{(D-2)^2+4k_w}\leq D$, that is, $k_w\leq D-1=N-1+\alpha$. Hence $s_w\leq1$, with equality if and only if $k_w=N-1+\alpha$, which together with $k_w\leq\alpha$ forces $N=1$. So $s_w<1$ when $N\geq2$, and $s_w=1$ when $N=1$ and $k_w=\alpha$ is the best constant. For the full monomial weight, $k_w=\min_i\alpha_i$, and Theorem~\ref{thm:ICD} gives the better constant $2$ by using the product structure.

On the other hand, the constant $2$ holds for every homogeneous weight if we restrict to radial functions. Using the homogeneity of $w$, we can write $L_w$ in polar coordinates as
\begin{equation}
    L_w = \underbrace{\p_{rr} - \left(r - \frac{D-1}{r}\right)\p_r}_{L_r} + \frac{1}{r^2}L_\sigma, \label{eq: L_w in polar}%
\end{equation}
where the angular operator is
\begin{equation}
    L_\sigma = \Delta_{\mathbb{S}^{N-1}} + \nabla_{\mathbb{S}^{N-1}} \log w \cdot \nabla_{\mathbb{S}^{N-1}}. \label{eq: angular operator}
\end{equation}
In particular, for a radial function $f=f(r)$, $r=|x|$, the angular term disappears and
\begin{equation}
    L_wf = f'' - \left(r-\frac{D-1}{r}\right)f',
    \label{eq: radial reduction}
\end{equation}
which is the one-dimensional monomial operator \eqref{Laguerre} with $\alpha$
replaced by $D-1$. By Section~\ref{sec:spectral}, the radial eigenfunctions of $-L_w$ are therefore
$L_k^{\beta}\left(\frac{r^2}{2}\right)$ with $\beta=\frac D2-1$ and eigenvalues $2k$, $k\geq0$.

\begin{theorem}[$ICD(2,\infty)$ on radial functions]\label{thm: radial ICD}
Let $w$ be homogeneous of degree $\alpha$ on $\Sigma$ with $D = N+\alpha > 1$, and
let $u$ be radial, $u = u(r)$ with $r = |x|$ and $u\in C^2(0,\infty)$, satisfying the decay condition
\begin{equation}
    r^{D-2}e^{-\frac{r^2}{2}}|u'|^2 \longrightarrow 0
    \qquad \text{as } r\to0 \text{ and as } r\to\infty.
    \label{eq: radial decay}
\end{equation}
Then
\begin{equation}
    \int_\Sigma \Gamma_2(u)  d\mu_w \geq 2\int_\Sigma \Gamma(u)  d\mu_w.
    \label{eq: radial ICD}
\end{equation}
\end{theorem}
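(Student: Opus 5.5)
For a radial $u=u(r)$ I will compute $\Gamma_2(u)$ pointwise and then reduce \eqref{eq: radial ICD} to Lemma~\ref{lemma1} with $\alpha$ replaced by $D-1$.

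\emph{Pointwise formula.} Let $\psi=\log w$ and $W=\frac{|x|^2}{2}-\psi$. Since $\nabla u=u'(r)\frac{x}{r}$, the Bochner formula gives
\[
\Gamma_2(u)=\|\nabla^2u\|^2+|u'|^2\Big(1-\nabla^2\psi\big(\tfrac{x}{r},\tfrac{x}{r}\big)\Big).
\]
The Hessian of a radial function has eigenvalue $u''$ in the radial direction and $u'/r$ (multiplicity $N-1$) in the tangential directions. Hence
\[
\|\nabla^2u\|^2=|u''|^2+(N-1)\frac{|u'|^2}{r^2}.
\]
For the weight term, differentiate Euler's identity $x\cdot\nabla\psi=\alpha$ to get $\nabla\psi+\nabla^2\psi\,x=0$. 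Pairing with $x$ gives $\nabla^2\psi(x,x)=-x\cdot\nabla\psi=-\alpha$. Therefore
\[
\Gamma_2(u)=|u''|^2+\Big(1+\frac{N-1+\alpha}{r^2}\Big)|u'|^2=|u''|^2+\Big(1+\frac{D-1}{r^2}\Big)|u'|^2 .
\]
This is exactly the one-dimensional $\Gamma_2$ of $L_{D-1}$ from the proof of Theorem~\ref{thm:ICD}, consistent with \eqref{eq: radial reduction}. Note that no convexity or log-concavity of $w$ is used.

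\emph{Integration.} Integrate in polar coordinates using \eqref{eq:polar measure}. The angular factor $\int w(\sigma)\,d\sigma/Z_w$ is a common constant, so \eqref{eq: radial ICD} becomes
\[
\int_0^\infty\Big(|u''|^2+\frac{D-1}{r^2}|u'|^2\Big)\,r^{D-1}e^{-\frac{r^2}{2}}dr\ \geq\ \int_0^\infty|u'|^2\,r^{D-1}e^{-\frac{r^2}{2}}dr .
\]
This is Lemma~\ref{lemma1} with $\alpha=D-1>0$ (here $D>1$ is used) and $h=u'$. Its hypothesis \eqref{eq:decay} reads $r^{D-2}e^{-r^2/2}|u'|^2\to0$, which is precisely \eqref{eq: radial decay}.

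\emph{Main obstacle.} The only delicate points are the identity $\nabla^2\psi(x,x)=-\alpha$ and the finiteness of the integrals. On the first, $w\in C^2(\Sigma)$ suffices, because Euler's identity is differentiated only in the interior of the cone. On the second, if either integral on the left is infinite there is nothing to prove, exactly as in Lemma~\ref{lemma1}, and the left-hand side may equal $+\infty$. As in Lemma~\ref{lemma1}, equality holds exactly when $u'=cr$, i.e.\ $u=c\,r^2/2+\text{const}$. This matches the radial eigenfunction $L_1^{\beta}(r^2/2)$ with eigenvalue $2$, so the constant $2$ is sharp among radial functions.
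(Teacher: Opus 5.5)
Your proposal is correct and follows the paper's proof essentially step for step. You use the same radial Hessian computation $\|\nabla^2u\|^2=|u''|^2+(N-1)|u'|^2/r^2$, the same differentiation of Euler's identity giving $\nabla^2\log w(\sigma,\sigma)=-\alpha/r^2$, and the Bochner formula yielding $\Gamma_2(u)=|u''|^2+\bigl(1+\frac{D-1}{r^2}\bigr)|u'|^2$, and you reduce through the polar form of $\mu_w$ to Lemma~\ref{lemma1} with $\alpha$ replaced by $D-1$ and $h=u'$, exactly as the paper does.
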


\begin{proof}
Since $u$ is radial we have $\nabla u = u'(r)\sigma$ and
\begin{equation*}
    \p_{ij}u = u''(r)\sigma_i\sigma_j
    + \frac{u'(r)}{r}\left(\delta_{ij} - \sigma_i\sigma_j\right),
\end{equation*}
so that
\begin{equation}
    \Gamma(u) = |u'|^2, \qquad
    \|\nabla^2u\|^2 = |u''|^2 + (N-1)\frac{|u'|^2}{r^2}.
    \label{eq: radial hessian}
\end{equation}
Next we evaluate $\nabla^2\log w$ in the radial direction. Homogeneity of $w$
gives $x\cdot\nabla\log w = \alpha$, and differentiating this identity yields
\begin{equation*}
    \nabla\log w + \nabla^2\log w  x = 0.
\end{equation*}
Contracting with $x$ we obtain $\nabla^2\log w(x,x) = -x\cdot\nabla\log w = -\alpha$,
hence
\begin{equation}
    \nabla^2\log w(\sigma,\sigma) = -\frac{\alpha}{r^2}.
    \label{eq: radial hessian of log w}
\end{equation}
By the Bochner formula \eqref{ICDhelper9} together with \eqref{eq: radial hessian}
and \eqref{eq: radial hessian of log w},
\begin{align}
    \Gamma_2(u)
    &= |\nabla u|^2 + \|\nabla^2u\|^2 - \nabla^2\log w(\nabla u,\nabla u) \notag\\
    &= |u'|^2 + |u''|^2 + (N-1)\frac{|u'|^2}{r^2} + \alpha\frac{|u'|^2}{r^2} \notag\\
    &= |u''|^2 + \left(1 + \frac{D-1}{r^2}\right)|u'|^2,
    \label{eq: radial gamma2}
\end{align}
which is exactly the one-dimensional monomial expression for $\Gamma_2$ with
$\alpha$ replaced by $D-1$.

Subtracting $\int_\Sigma |u'|^2d\mu_w$ from \eqref{eq: radial gamma2} we see that
\eqref{eq: radial ICD} is equivalent to
\begin{equation}
    \int_\Sigma \left[|u''|^2 + \frac{D-1}{r^2}|u'|^2\right] d\mu_w
    \geq \int_\Sigma |u'|^2  d\mu_w.
    \label{eq: radial ICD reduced}
\end{equation}
By \eqref{eq:polar measure} the integrand of \eqref{eq: radial ICD reduced}
depends only on $r$, so the angular factor $\int_{\Sigma\cap S^{N-1}}w(\sigma)d\sigma$
appears on both sides and cancels. Hence \eqref{eq: radial ICD reduced} reduces to
the one-dimensional statement
\begin{equation}
    \int_0^\infty \left[|u''|^2 + \frac{D-1}{r^2}|u'|^2\right]
    r^{D-1}e^{-\frac{r^2}{2}}dr
    \geq \int_0^\infty |u'|^2 r^{D-1}e^{-\frac{r^2}{2}}dr.
    \label{eq: radial ICD 1d}
\end{equation}
The radial measure $r^{D-1}e^{-\frac{r^2}{2}}dr$ is, up to a constant factor, the one-dimensional
monomial measure $\mu_{D-1}$, so \eqref{eq: radial ICD 1d} is Lemma \ref{lemma1}
with $\alpha$ replaced by $D-1$, applied to $h = u'$. This completes the proof of
\eqref{eq: radial ICD}.
\end{proof}

\begin{corollary}[Sharp radial Poincar\'e inequality]\label{cor: radial poincare}
    Let $w$ be homogeneous of degree $\alpha$ on $\Sigma$ with $D=N+\alpha>1$. Then every radial $u\in W^{1,2}(\Sigma,\mu_w)$ with $\int_\Sigma ud\mu_w=0$ satisfies
    \begin{equation}
        \int_\Sigma |u|^2d\mu_w
        \leq
        \frac{1}{2}\int_\Sigma|\nabla u|^2d\mu_w.
        \label{eq: radial poincare}
    \end{equation}
    The constant $\frac{1}{2}$ is sharp, and equality holds if and only if
    \begin{equation*}
        u(r)=c\left(r^2-D\right)
    \end{equation*}
    for some $c\in\mathbb R$.
\end{corollary}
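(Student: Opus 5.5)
The plan is to reduce to the one-dimensional Laguerre setting of Section~\ref{sec:spectral}. By \eqref{eq:polar measure}, a radial function $u=u(r)$ satisfies
\begin{equation*}
\int_\Sigma F(u,u')\,d\mu_w=\int_0^\infty F(u(r),u'(r))\,d\mu_{D-1}(r)
\end{equation*}
for any integrand $F$, since the angular factor $Z_w^{-1}\int_{\Sigma\cap S^{N-1}}w(\sigma)\,d\sigma$ combines with the radial normalization into a probability measure. Here $\mu_{D-1}$ is the one-dimensional monomial measure with exponent $D-1>0$. So the radial elements $u\in W^{1,2}(\Sigma,\mu_w)$ are exactly the functions $u(|x|)$ with $u\in W^{1,2}_A((0,\infty),\mu_{D-1})$. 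The norms agree by the identity above. For the closure, I would approximate a radial $u$ by radial smooth functions: either radially average an approximating sequence in $C_c^\infty(\overline\Sigma)$ over the angular measure, or mollify in $r$ directly. Both operations preserve the norm bounds. The corollary then becomes the statement of Theorem~\ref{poincareineq} for $N=1$ and $\alpha=D-1$, which applies because $D-1>0$.

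Concretely, Theorem~\ref{poincareineq}, or equivalently the Laguerre decomposition with $\beta=\frac D2-1$, gives
\begin{equation*}
\operatorname{Var}_{\mu_{D-1}}(u)\le\tfrac12\int_0^\infty|u'|^2\,d\mu_{D-1}.
\end{equation*}
Equality holds if and only if $u-\int u\,d\mu_{D-1}$ is a multiple of $\psi_1=\frac{D}{2}-\frac{r^2}{2}$. With the mean-zero normalization $\int_\Sigma u\,d\mu_w=0$, this is precisely $u=c(r^2-D)$. As a consistency check, $\int r^2\,d\mu_{D-1}=(D-1)+1=D$, so $r^2-D$ indeed has mean zero. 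Sharpness comes from the explicit computation in the proof of Theorem~\ref{poincareineq} with $\alpha=D-1$: it gives $\operatorname{Var}(r^2)=2D=\frac12\int|(r^2)'|^2\,d\mu_{D-1}$. The function $r^2=|x|^2$ is a polynomial, so it lies in $W^{1,2}(\Sigma,\mu_w)$.

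An alternative route to the inequality goes through Theorem~\ref{thm: radial ICD} and the semigroup argument. The semigroup of $L_w$ preserves radial functions, and by \eqref{eq: radial reduction} its action on them is the one-dimensional Laguerre semigroup. I prefer the spectral route, because it yields the equality characterization directly and avoids regularity questions.

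The main obstacle is the identification of radial $W^{1,2}(\Sigma,\mu_w)$ with $W^{1,2}_A((0,\infty),\mu_{D-1})$ in the Neumann sense. Two points need care. First, for $D-1<2$ the Neumann realization has to be chosen, and the spectral picture of Section~\ref{sec:spectral} relies on this choice. Second, approximants in $C_c^\infty(\overline{(0,\infty)})$ may be nonzero at $r=0$, and they must correspond to admissible radial functions on $\Sigma$. Since $D>1$, constants near $0$ have finite energy, so I expect no obstruction. I would handle the approximation explicitly by truncating at large $r$ and mollifying.
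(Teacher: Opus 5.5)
Your proposal is correct and follows the paper's proof. The polar formula \eqref{eq:polar measure} turns radial integrals against $\mu_w$ into integrals against $\mu_{D-1}$, and Theorem~\ref{poincareineq} with $N=1$, $\alpha=D-1$ then gives the inequality, the sharpness, and the equality case $c(r^2-D)$. Your radial-averaging (Jensen) argument, which places the profile of a radial $u\in W^{1,2}(\Sigma,\mu_w)$ in the one-dimensional form domain, fills in a step the paper leaves implicit. One harmless slip in a side remark: the Neumann choice matters in the limit-circle range $\beta=\frac D2-1<1$, i.e.\ $D-1<3$, not $D-1<2$; your identification gives the Neumann form domain for every $D>1$ anyway.
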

\begin{proof}
By \eqref{eq:polar measure}, for radial functions, $\mu_w$ reduces to the normalized one-dimensional measure $\mu_{D-1}$ in the variable $r$, and $|\nabla u|=|u'(r)|$. Hence $\operatorname{Var}_{\mu_w}(u)=\operatorname{Var}_{\mu_{D-1}}(u)$ and $\int_\Sigma|\nabla u|^2d\mu_w=\int_0^\infty|u'|^2d\mu_{D-1}$. The claim is therefore Theorem~\ref{poincareineq} with $N=1$ and $\alpha=D-1>0$, including the equality case $\frac{D}{2}-\frac{r^2}{2}=-\frac12(r^2-D)$.
\end{proof}

{\bf Acknowledgment.} The first author was partly supported by a discovery grant from NSERC. The second author was partly supported by a grant from the Simons Foundation. The third author was partly supported by GAANN Fellowship from the Department of Education.

\end{document}